\numberwithin{equation}{section}
\theoremstyle{remark}
\newtheorem*{convention*}{Convention}
\theoremstyle{plain}
\newtheorem{thm}{\protect\theoremname}[section]
\theoremstyle{definition}
\newtheorem{defn}[thm]{\protect\definitionname}
\theoremstyle{plain}
\newtheorem{prop}[thm]{\protect\propositionname}
\theoremstyle{plain}
\newtheorem{lem}[thm]{\protect\lemmaname}
\theoremstyle{remark}
\newtheorem{rem}[thm]{\protect\remarkname}
\theoremstyle{plain}
\newtheorem{cor}[thm]{\protect\corollaryname}
\theoremstyle{definition}
\newtheorem{example}[thm]{\protect\examplename}
\theoremstyle{remark}
\newtheorem{notation}[thm]{\protect\notationname}
\let\mathcal=\CMcal
\let\ldash=\l
\DeclareSymbolFont{YHlargesymbols}{OMX}{yhex}{m}{n}
\DeclareMathAccent{\wideparen}{\mathord}{YHlargesymbols}{"F3}
\providecommand{\corollaryname}{Corollary}
\providecommand{\definitionname}{Definition}
\providecommand{\examplename}{Example}
\providecommand{\lemmaname}{Lemma}
\providecommand{\notationname}{Notation}
\providecommand{\propositionname}{Proposition}
\providecommand{\remarkname}{Remark}
\providecommand{\theoremname}{Theorem}
\begin{document}
\global\long\def\e{\varepsilon}%
\global\long\def\N{\mathbb{N}}%
\global\long\def\Z{\mathbb{Z}}%
\global\long\def\Q{\mathbb{Q}}%
\global\long\def\R{\mathbb{R}}%
\global\long\def\C{\mathbb{C}}%
\global\long\def\G{\mathbb{G}}%
\global\long\def\QG{\mathbb{G}}%
\global\long\def\QH{\mathbb{H}}%
\global\long\def\bn{\mathbb{N}}%
\global\long\def\br{\mathbb{R}}%
\global\long\def\bc{\mathbb{C}}%
\global\long\def\bt{\mathbb{T}}%

\global\long\def\H{\EuScript H}%
\global\long\def\J{\mathcal{J}}%
\global\long\def\K{\mathcal{K}}%
\global\long\def\KHilb{\EuScript K}%
\global\long\def\a{\alpha}%
\global\long\def\be{\beta}%
\global\long\def\l{\lambda}%
\global\long\def\om{\omega}%
\global\long\def\z{\zeta}%
\global\long\def\gnsmap{\upeta}%
\global\long\def\Aa{\mathcal{A}}%
\global\long\def\Aalg{\mathsf{A}}%
\global\long\def\Sant{\mathtt{S}}%
\global\long\def\Rant{\mathtt{R}}%

\global\long\def\Ree{\operatorname{Re}}%
\global\long\def\Img{\operatorname{Im}}%
\global\long\def\linspan{\operatorname{span}}%
\global\long\def\supp{\operatorname{supp}}%
\global\long\def\slim{\operatorname*{s-lim}}%
\global\long\def\clinspan{\operatorname{\overline{span}}}%
\global\long\def\co{\operatorname{co}}%
\global\long\def\pres#1#2#3{\prescript{#1}{#2}{#3}}%

\global\long\def\tensor{\otimes}%
\global\long\def\tensormin{\mathbin{\otimes_{\mathrm{min}}}}%
\global\long\def\tensorn{\mathbin{\overline{\otimes}}}%

\global\long\def\A{\forall}%

\global\long\def\vNa{M}%
\global\long\def\matrices{\mathrm{M}}%

\global\long\def\i{\mathrm{id}}%
\global\long\def\Tr{\mathrm{Tr}}%

\global\long\def\one{\mathds{1}}%
\global\long\def\Ww{\mathds{W}}%
\global\long\def\wW{\text{\reflectbox{\ensuremath{\Ww}}}\:\!}%
\global\long\def\op{\mathrm{op}}%
\global\long\def\WW{{\mathds{V}\!\!\text{\reflectbox{\ensuremath{\mathds{V}}}}}}%
\global\long\def\Vv{\mathds{V}}%
\global\long\def\vV{\text{\reflectbox{\ensuremath{\Vv}}}\:\!}%

\global\long\def\M#1{\operatorname{M}(#1)}%
\global\long\def\Linfty#1{L^{\infty}(#1)}%
\global\long\def\Lone#1{L^{1}(#1)}%
\global\long\def\Lp#1{L^{p}(#1)}%
\global\long\def\Lq#1{L^{q}(#1)}%
\global\long\def\LoneSharp#1{L_{\sharp}^{1}(#1)}%
\global\long\def\Ltwo#1{L^{2}(#1)}%
\global\long\def\Cz#1{\mathrm{C}_{0}(#1)}%
\global\long\def\CzU#1{\mathrm{C}_{0}^{\mathrm{u}}(#1)}%
\global\long\def\CzUSSharp#1{\mathrm{C}_{0}^{\mathrm{u}}(#1)_{\sharp}^{*}}%
\global\long\def\CU#1{\mathrm{C}^{\mathrm{u}}(#1)}%
\global\long\def\Cb#1{\mathrm{C}_{b}(#1)}%
\global\long\def\CStarF#1{\mathrm{C}^{*}(#1)}%
\global\long\def\CStarR#1{\mathrm{C}_{\mathrm{r}}^{*}(#1)}%
\global\long\def\Cc#1{\mathrm{C}_{c}(#1)}%
\global\long\def\CC#1{\mathrm{C}(#1)}%
\global\long\def\CzX#1#2{\mathrm{C}_{0}^{#1}(#2)}%
\global\long\def\CcX#1#2{\mathrm{C}_{c}^{#1}(#2)}%
\global\long\def\CcTwo#1{\mathrm{C}_{c}^{2}(#1)}%
\global\long\def\aa#1{\mathfrak{a}_{#1}}%

\global\long\def\linfty#1{\ell^{\infty}(#1)}%
\global\long\def\lone#1{\ell^{1}(#1)}%
\global\long\def\ltwo#1{\ell^{2}(#1)}%
\global\long\def\cz#1{\mathrm{c}_{0}(#1)}%
\global\long\def\Pol#1{\mathrm{Pol}(#1)}%
\global\long\def\Ltwozero#1{L_{0}^{2}(#1)}%
\global\long\def\Irred#1{\mathrm{Irred}(#1)}%
\global\long\def\conv{\star}%

\global\long\def\Ad#1{\mathrm{Ad}(#1)}%
\global\long\def\VN#1{\mathrm{VN}(#1)}%
\global\long\def\d{\,\mathrm{d}}%
\global\long\def\t{\mathrm{t}}%

\global\long\def\tie#1{\wideparen{#1}}%

\title[Quantum generating functionals]{Generating functionals for locally compact quantum groups}
\author{Adam Skalski}
\address{Institute of Mathematics of the Polish Academy of Sciences,
	ul.~\'Sniadeckich 8, 00--656 Warszawa, Poland}
\email{a.skalski@impan.pl}

\author{Ami Viselter}
\address{Department of Mathematics, University of Haifa, 31905 Haifa, Israel}

\email{aviselter@univ.haifa.ac.il}

\subjclass[2010]{Primary 46L65; Secondary 46L30, 46L53, 46L57, 47D99}
\keywords{locally compact quantum group; convolution semigroup of states; generating functional}

\begin{abstract}
Every symmetric generating functional of a convolution semigroup of states  on a locally compact quantum group is shown to admit a dense unital $*$-subalgebra with core-like properties in its domain. On the other hand we prove that every normalised, symmetric, hermitian conditionally positive functional on a dense $*$-subalgebra of the unitisation of the universal C$^*$-algebra of a locally compact quantum group, satisfying certain technical conditions, extends in a canonical way to a generating functional. Some consequences of these results are outlined, notably those related to constructing cocycles out of convolution semigroups.    		
\end{abstract}

\maketitle

\section*{Introduction}
Convolution semigroups of probability measures on a locally compact group on one hand are a source of a rich and interesting class of Markov semigroups on classical function spaces, and, on the other hand, form a fundamental notion in the study of L\'evy processes  (stochastic processes with independent and identically distributed increments). In the abstract context of measure spaces, Markov semigroups, as one-parameter semigroups of operators, are naturally studied via their generators \cite{EngelNagel}. 
The additional translation invariance of the operator semigroups coming from convolution semigroups of measures, afforded by the group structure, yields another, a priori simpler tool which determines the semigroup uniquely: the so-called generating functional, given by differentiating the measures themselves at $t=0$. The generating functional can be viewed as a `localised' version of the semigroup generator (so that in the case of the heat semigroup on $\mathbb{R}$ the generating functional evaluated at a smooth function is just its second derivative at $0$), and plays a key role in the L\'evy--Khintchine formula and its generalisations \cite{Heyer__book_prob_meas}. When the group in question is abelian, the Fourier transform allows us to view  generating functionals equivalently as conditionally negative-definite functions on the dual locally compact group. This point of view turns out to be very useful in certain approaches to  potential theory \cite{BergForst}.

Not surprisingly, generating functionals played a key role in quantum generalisations of classical convolution semigroups to the framework of compact quantum groups, or more generally $*$-bialgebras, initially developed primarily by Sch\"urmann and his collaborators \cite{Schurmann__white_noise_bialg}. Sch\"urmann's reconstruction theorem says in particular that each normalised, hermitian and conditionally positive functional on a $*$-bialgebra indeed comes from a uniquely determined convolution semigroup of states. This allows one to define and study various properties of convolution semigroups of states (for instance Gaussianity) directly via generating functionals, and was put in use with great success for example in \cite{Cipriani_Franz_Kula__sym_Levy_proc}.
If one wants to extend this study to the framework of locally compact quantum groups of Kustermans and Vaes \cite{Kustermans_Vaes__LCQG_C_star}, one encounters immediately a significant stumbling block: although in \cite{Lindsay_Skalski__conv_semigrp_states} Lindsay and the first-named author showed that each convolution semigroup of states on a locally compact quantum group admits a densely-defined generating functional, which moreover determines the semigroup uniquely, contrary to the classical (or dual to classical) and compact quantum cases there is no apparent canonical subalgebra inside the functional's domain. This means that it is far from straightforward to express properties of the semigroup via the properties of its generating functional (at least in the way it was done in the compact case) and makes it very difficult to conceive of a suitable version of the reconstruction theorem. Therefore in the predecessor of this paper, \cite{Skalski_Viselter__convolution_semigroups}, we discussed the generating functionals only briefly, and exploited the additional $L^2$-symmetry assumption, using quantum Dirichlet forms as the main tool.

In this article we keep the mentioned symmetry assumption, but revisit the matter of generating functionals. Perhaps surprisingly, it turns out that in the most general locally compact quantum group context a useful path comes again, as in the abelian situation of \cite{BergForst}, from the Fourier transform ideas, combined with the Dirichlet form techniques of \cite{Skalski_Viselter__convolution_semigroups}. Specifically, we use quantum Fourier transforms and noncommutative Dirichlet forms to realise the two aims alluded to above. Firstly, we prove that every generating functional contains in its domain a dense unital $*$-subalgebra, such that the corresponding restriction of the functional determines the semigroup uniquely. Secondly, we establish a reconstruction theorem:  under certain, somewhat complicated (but satisfied in natural examples) technical conditions a conditionally positive functional on a dense unital $*$-subalgebra of the universal C$^*$-algebra of a locally compact quantum group admits an extension to a uniquely determined generating functional of a convolution semigroup of positive functionals. As we indicate in the text, the results of this type for example allow us to associate to every convolution semigroup of states as above certain canonically defined cocycles. At the same time the key task of finding a common dense domain for all generating functionals associated with a given locally compact quantum group remains for now beyond our reach.  

The contents of the paper are as follows. After recalling some preliminary facts and notations in Section \ref{sec:prelims}, in Section \ref{sec:dom_gen_func} we introduce twisted Fourier transforms, show that they are particularly amenable to verifying their belonging to the domain of generating functionals, and use this to show that the domain of every generating functional contains a dense $*$-subalgebra. Here also we discuss the relevant domain in various concrete examples. In Section \ref{sec:cocycles} we indicate the consequences of the earlier results for the existence of quantum group cocycles. In Section \ref{sec:reconstruction} we prove two versions of the reconstruction result for conditionally positive symmetric functionals defined on a domain satisfying certain technical requirements. Finally, Section \ref{sec:cqg_examples} discusses the consequences of the main results for the case of compact quantum groups.

\section{\label{sec:prelims}Preliminaries}

We start with some conventions. Inner products are linear in the right
variable, and all inner product spaces are complex. For a Hilbert
space $\H$ and $\z\in\H$, denote by $\omega_{\z}$ the element of
$B(\H){}_{*}$ given by $T\mapsto\left\langle \z,T\z\right\rangle $,
$T\in B(\H)$. For a matrix $\left(a_{ij}\right)_{1\le i,j\le n}$,
the element in the $i$th row and $j$th column is $a_{ij}$. For
a C$^{*}$-algebra $B$, denote by $B^{\#}$ its trivial unitisation,
which is $B$ itself if the latter is unital, and by $\M B$ the multiplier
algebra of $B$. If $B$ is unital we denote its unit by $\one$.
For $\om\in B^{*}$, we use the same notation $\om$ for the strict extension of $\om$ to $\M B$ (or merely to $B^{\#}$), and do the same for slice maps.
We also let $\overline{\om}\in B^{*}$ be given
by $\overline{\om}(x):=\overline{\om(x^{*})}$, $x\in B$. We denote
by $\tensormin$ and $\tensorn$ the minimal C$^{*}$-algebraic and
normal von Neumann algebraic tensor products, respectively.

Let $\vNa$ be a von Neumann algebra acting standardly on a Hilbert space
$\Ltwo \vNa$ and $\varphi$ be a normal semi-finite faithful (n.s.f.)~weight on $\vNa$. A (non-negative) closed densely-defined quadratic
form $Q$ on $\Ltwo \vNa$ is called a \emph{Dirichlet form with respect
	to $\varphi$} if $Q\circ\pi\le Q$, where $\pi$ is the nearest-point
projection of $\Ltwo \vNa$ onto the key closed convex set associated with $(\vNa,\varphi)$ as defined in \citep[p.~62, with terminology from pp.~42 and 53]{Goldstein_Lindsay__Markov_sgs_KMS_symm_weight}.
More generally, for $n\in\N$, we write $\pi^{(n)}$ for the nearest-point projection of $\Ltwo{\matrices_{n},\Tr_{n}}\tensor\Ltwo{\vNa}$
onto the key closed convex set associated with $(\matrices_{n}\tensor \vNa,\Tr_{n}\tensor\varphi)$, where $\Tr_{n}$ is the canonical (non-normalised) trace on $\matrices_n$.
If all matrix amplifications of $Q$ are Dirichlet, namely $Q^{(n)} \circ \pi^{(n)} \le Q^{(n)}$ for all $n\in\N$, we say that $Q$ is \emph{completely}
Dirichlet with respect to $\varphi$ \citep[Appendix]{Skalski_Viselter__convolution_semigroups}.
All the related terminology can be found in \citep{Skalski_Viselter__convolution_semigroups}.

The basic objects of this paper are locally compact quantum groups
in the sense of Kustermans and Vaes. The following definition and
properties are taken from \citep{Kustermans_Vaes__LCQG_C_star,Kustermans_Vaes__LCQG_von_Neumann,Van_Daele__LCQGs}
unless otherwise indicated.
\begin{defn}
	A \emph{locally compact quantum group} in the von Neumann algebraic
	setting is a pair $\G=\left(\vNa,\Delta\right)$ that satisfies:
	\begin{enumerate}
		\item $\vNa$ is a von Neumann algebra;
		\item $\Delta\colon \vNa\to \vNa\tensorn \vNa$ is a \emph{co-multiplication} on $\vNa$,
		i.e.\ a normal unital $*$-homomorphism that is co-associative: $(\Delta\tensor\i)\circ\Delta=(\i\tensor\Delta)\circ\Delta$;
		\item there exist n.s.f.~weights $\varphi,\psi$ on $\vNa$, called the left
		and right\emph{ Haar weights}, which are left and right invariant
		under $\Delta$, respectively.
	\end{enumerate}
	Henceforth we write $\Linfty{\G}$ for $\vNa$, $\Lone{\G}$ for the
	predual $\Linfty{\G}_{*}$, and $\Ltwo{\G}$ for a Hilbert space on
	which $\Linfty{\G}$ acts standardly.
\end{defn}

For example, each locally compact group $G$ induces a locally compact
quantum group with $\vNa=\Linfty G$ and $(\Delta(f))(s,t):=f(st)$,
where we identified $\Linfty G\tensorn\Linfty G\cong\Linfty{G\times G}$.

Every locally compact quantum group $\G$ admits a \emph{dual} locally
compact quantum group $\widehat{\G}$. This duality extends Pontryagin's
duality for locally compact abelian groups, and satisfies the `double
dual property': $\widehat{\widehat{\G}}=\G$. Objects pertaining
to $\widehat{\G}$ will be adorned with a hat.

Let $\G$ be a locally compact quantum group. There exists a unitary
$W\in\Linfty{\G}\tensorn\Linfty{\widehat{\G}}$, called the \emph{left
	regular representation} of $\G$, which implements $\Delta$ by $\Delta(x)=W^{*}(\one\tensor x)W$ (acting on $\Ltwo{\G} \tensor \Ltwo{\G}$)
for all $x\in\Linfty{\G}$. The \emph{antipode} of $\G$ is a generally
unbounded, ultraweakly closed operator $\Sant$ on $\Linfty{\G}$
such that for every $\widehat{\om}\in\Lone{\widehat{\G}}$ we have
$(\i\tensor\widehat{\om})(W)\in D(\Sant)$ and $\Sant\left((\i\tensor\widehat{\om})(W)\right)=(\i\tensor\widehat{\om})(W^{*})$.
It has a `polar decomposition' $\Sant=\Rant\circ\tau_{-i/2}$, where
$\Rant$ is the \emph{unitary antipode}, which is a $*$-anti-automorphism
of $\Linfty{\G}$, and $\tau_{-i/2}$ is the generator of the \emph{scaling
	group} $\left(\tau_{t}\right)_{t\in\R}$, which is the action of $\R$
on $\Linfty{\G}$ associated with the scaling group.

There are two other `faces' of $\G$. The first is the \emph{reduced
	C$^{*}$-algebraic} face, based on a C$^{*}$-algebra $\Cz{\G}$ that
is ultraweakly dense in $\Linfty{\G}$ and satisfies $W\in\M{\Cz{\G}\tensormin\Cz{\widehat{\G}}}$.
The second is the \emph{universal C$^{*}$-algebraic} face \citep{Kustermans__LCQG_universal},
based on a C$^{*}$-algebra $\CzU{\G}$, which has a special universality
property. In particular, it surjects canonically onto $\Cz{\G}$ and
possesses a distinguished character $\epsilon$ called the \emph{co-unit}
of $\G$. The unitary $W$ has half-universal versions $\Ww,\wW$,
where, e.g., $\wW\in\M{\Cz{\G}\tensormin\CzU{\widehat{\G}}}$ and $(\omega \tensor \i)(\wW) \in \CzU{\widehat{\G}}$ for all $\omega \in \Lone{\G}$. The
co-multiplication also has a universal version $\Delta_{\mathrm{u}}\colon\CzU{\G}\to\M{\CzU{\G}\tensormin\CzU{\G}}$,
which induces on $\CzU{\G}^{*}$ a \emph{convolution} product $\star$,
and we have natural isometric embeddings $\Lone{\G}\hookrightarrow\Cz{\G}^{*}\hookrightarrow\CzU{\G}^{*}$.
Furthermore, the maps $\Sant,\Rant,\tau$ have universal versions
$\Sant^{\mathrm{u}},\Rant^{\mathrm{u}},\tau^{\mathrm{u}}$ acting
on $\CzU{\G}$.

We say that $\G$ is \emph{compact} if $\Cz \G$, equivalently $\CzU \G$, is unital \cite{Woronowicz__symetries_quantiques,Runde__charac_compact_discr_QG}. In this case, we write $((u^\a_{ij})_{1\le i,j \le n_\a})_{\a \in \Irred \G}$ for a complete family of representatives of equivalence classes of (finite-dimensional) irreducible representations of $\G$. Then $\Pol\G := \linspan\{u^\a_{ij} : \a \in \Irred \G, 1\le i,j \le n_\a\}$ is a dense subspace of $\CU \G := \CzU \G$.

\begin{defn}[\citep{Schurmann__pos_and_cond_pos_coalg}]
	\label{def:cond_pos}Let $B$ be a C$^{*}$-algebra and $\epsilon$
	be a character of $B$. A linear functional $\gamma\colon\mathscr{A}\to\C$,
	where $\mathscr{A}$ is a subspace of $B$, is called \emph{conditionally
		positive with respect to $\epsilon$} if $\gamma(a)\ge0$ for every
	$a\in\mathscr{A}\cap\ker\epsilon\cap B_{+}$. 
\end{defn}

It is obvious that $\mu+s\epsilon$ is conditionally positive with
respect to $\epsilon$ for every $\mu\in B_{+}^{*}$ and $s\in\C$;
see \prettyref{rem:bounded_cond_pos_funcl} for the converse.

\begin{defn}[\citep{Lindsay_Skalski__conv_semigrp_states}]
A \emph{convolution semigroup of positive functionals on $\CzU{\G}$}
(or \emph{on $\G$}) is a family $\left(\mu_{t}\right)_{t\ge0}$ in
$\CzU{\G}_{+}^{*}$ such that $\mu_{0}=\epsilon$ and $\mu_{s}\conv\mu_{t}=\mu_{s+t}$
for all $s,t\ge0$. Say that $\left(\mu_{t}\right)_{t\ge0}$ is \emph{$w^{*}$-continuous}
if $\mu_{t}\xrightarrow[t\to0^{+}]{}\epsilon$ in the $w^{*}$-topology.
In this case, the \emph{generating functional} of $\left(\mu_{t}\right)_{t\ge0}$
is the (generally unbounded) linear functional $\gamma$ over $\CzU{\G}$
defined by 
\[
\gamma(x):=\lim_{t\to0^{+}}\frac{\mu_{t}(x)-\epsilon(x)}{t}
\]
with maximal domain $D(\gamma)$, consisting of all $x\in\CzU{\G}$
for which this limit exists. 
\end{defn}

The generating functional $\gamma$ of a $w^{*}$-continuous convolution semigroup
of positive functionals on $\CzU{\G}$ is clearly conditionally positive with respect to the co-unit. Furthermore, if the convolution semigroup consists of states and we extend $\gamma$ to $\linspan(D(\gamma)\cup\{\one\})\subseteq\CzU{\G}^{\#}$ by making
it vanish at $\one$ (which is automatic if $\G$ is compact), then the extended functional is also conditionally positive with respect to the co-unit.

To every $\mu\in\CzU{\G}^{*}$ we associated in \citep[Subsection 2.1 and Lemma 2.14]{Skalski_Viselter__convolution_semigroups}
the operators $R_{\mu} \in CB(\Linfty \G)$ and $\widetilde{R}_{\mu}^{(2,\varphi)}\in\M{\Cz{\widehat{\G}}}$. 
Recall that the first of them is defined as the adjoint of the operator on $\Lone \G$ given by the formula $\omega \mapsto \mu \star \omega$ (as $\Lone{\G}$,
when viewed canonically as a subspace of the completely contractive
Banach algebra $\CzU{\G}^{*}$, is an ideal); and the second is its natural KMS-implementation on $\Ltwo{\G}$ with respect to $\varphi$.

The maps $\mu \mapsto R_{\mu}, \mu \mapsto \widetilde{R}_{\mu}^{(2,\varphi)}$ are linear and injective, with $R_{\epsilon} = \i, \widetilde{R}_{\epsilon}^{(2,\varphi)} = \one$.
More information is provided in \prettyref{thm:SV_thm_3_4} and \prettyref{prop:SV_prop_3_8} below.

We now quote one of the main results of \citep{Skalski_Viselter__convolution_semigroups}.
Only the relevant parts are stated; for the rest, see \citep{Skalski_Viselter__convolution_semigroups}.
We take the opportunity to fix a mistake in the statement of \citep[Theorem 0.1]{Skalski_Viselter__convolution_semigroups}:
the words `modulo multiplication of forms by a positive number'
should have been `modulo subtracting a positive multiple of the quadratic
form $\left\Vert \cdot\right\Vert ^{2}$', see \prettyref{rem:SV_thm_3_4}. 
\begin{thm}[{\citep[Theorem 3.4]{Skalski_Viselter__convolution_semigroups}}]
	\label{thm:SV_thm_3_4}Let $\G$ be a locally compact quantum group.
	There exist $1-1$ correspondences between the following classes:
	\begin{enumerate}
		\item \label{enu:SV_thm_3_4__1}$w^{*}$-continuous convolution semigroups
		$\left(\mu_{t}\right)_{t\ge0}$ of $\Rant^{\mathrm{u}}$-invariant
		contractive positive functionals on $\CzU{\G}$;
		\item \label{enu:SV_thm_3_4__2}$C_{0}$-semigroups $\left(S_{t}\right)_{t\ge0}$
		of selfadjoint completely Markov operators on $\Ltwo{\G}$ with respect
		to $\varphi$ that belong to $\Linfty{\widehat{\G}}$;
		\item \label{enu:SV_thm_3_4__3}completely Dirichlet forms $Q$ with respect
		to $\varphi$ that are invariant under $\mathcal{U}(\Linfty{\widehat{\G}}')$.
	\end{enumerate}
	The correspondences are given by $S_{t}=\widetilde{R}_{\mu_{t}}^{(2,\varphi)}$
	for all $t\ge0$ (\prettyref{enu:SV_thm_3_4__1}$\Leftrightarrow$\prettyref{enu:SV_thm_3_4__2})
	and the general correspondence between selfadjoint completely Markov
	semigroups and completely Dirichlet forms \citep[Corollary A.8]{Skalski_Viselter__convolution_semigroups}
	(\prettyref{enu:SV_thm_3_4__2}$\Leftrightarrow$\prettyref{enu:SV_thm_3_4__3}); the latter means that $\left(S_{t}\right)_{t\ge0} = {(e^{-tA})}_{t\ge0}$, where $A$ is the positive selfadjoint operator on $\Ltwo{\G}$ such that $Q = \Vert A^{1/2} \cdot \Vert^2$.
\end{thm}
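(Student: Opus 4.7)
The plan is to split the assertion into two equivalences, (1)$\Leftrightarrow$(2) and (2)$\Leftrightarrow$(3), and handle them separately. For (2)$\Leftrightarrow$(3) there is nothing quantum-group-specific: this is the general Beurling--Deny--Cipriani--Sauvageot correspondence between selfadjoint completely Markov $C_{0}$-semigroups on $\Ltwo{\G}$ with respect to $\varphi$ and completely Dirichlet forms with respect to $\varphi$, which I would invoke from \citep[Corollary A.8]{Skalski_Viselter__convolution_semigroups}. Under this correspondence, the condition that $S_{t}\in\Linfty{\widehat{\G}}$ for every $t\ge 0$ is dual to the requirement that $Q$ be invariant under $\mathcal{U}(\Linfty{\widehat{\G}}')$, because the spectral projections of the generator of $(S_t)_{t\ge 0}$ then lie in $\Linfty{\widehat{\G}}=\mathcal{U}(\Linfty{\widehat{\G}}')'$. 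Hence the core task is (1)$\Leftrightarrow$(2).

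For (1)$\Rightarrow$(2): given $(\mu_{t})_{t\ge 0}$, set $S_{t}:=\widetilde{R}_{\mu_{t}}^{(2,\varphi)}\in\M{\Cz{\widehat{\G}}}\subseteq\Linfty{\widehat{\G}}$. The semigroup property $S_{s}S_{t}=S_{s+t}$ follows from the fact that $\mu\mapsto\widetilde{R}_{\mu}^{(2,\varphi)}$ turns the convolution product $\conv$ on $\CzU{\G}^{*}$ into composition of operators on $\Ltwo{\G}$, and $S_{0}=\one$ from $\widetilde{R}_{\epsilon}^{(2,\varphi)}=\one$. Selfadjointness of each $S_{t}$ is obtained from the $\Rant^{\mathrm{u}}$-invariance of $\mu_{t}$ after unpacking how the adjoint acts on the KMS-implementation (this is precisely where one uses that $\Rant$ intertwines $\Linfty{\widehat{\G}}$ with its adjoint action). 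The complete Markov property of each $S_{t}$ follows, after matrix amplification, from the fact that $\mu_{t}$ is a contractive positive functional. Finally, the $w^{*}$-continuity $\mu_{t}\to\epsilon$ translates to weak convergence $S_{t}\xi\to\xi$ on a total set of vectors of the form $\gnsmap(x)$ with $x$ in a suitable KMS core, which, together with the uniform contractivity $\|S_{t}\|\le 1$, upgrades to strong continuity at $0$ and hence throughout.

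For (2)$\Rightarrow$(1): given $(S_{t})_{t\ge 0}$, I would recover each $\mu_{t}$ by a quantum Bochner-type identification: the selfadjoint completely Markov elements of $\Linfty{\widehat{\G}}$ with norm at most one are exactly the operators $\widetilde{R}_{\mu}^{(2,\varphi)}$ arising from $\Rant^{\mathrm{u}}$-invariant contractive positive $\mu\in\CzU{\G}^{*}$. Concretely, one constructs $\mu$ by slicing the half-universal corepresentation $\wW\in\M{\Cz{\G}\tensormin\CzU{\widehat{\G}}}$ against a state implemented by a suitable vector, and then checks contractivity, positivity, and $\Rant^{\mathrm{u}}$-invariance from the corresponding properties of $S_{t}$. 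Injectivity of $\mu\mapsto\widetilde{R}_{\mu}^{(2,\varphi)}$ gives uniqueness of $\mu_{t}$; the semigroup identity and the multiplicativity of the assignment with respect to $\conv$ then force $\mu_{s}\conv\mu_{t}=\mu_{s+t}$ and $\mu_{0}=\epsilon$; and strong continuity of $(S_{t})_{t\ge 0}$ yields $w^{*}$-continuity of $(\mu_{t})_{t\ge 0}$ after testing against a suitable dense set of functionals.

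The principal difficulty sits in the direction (2)$\Rightarrow$(1), namely the quantum Bochner-type identification of selfadjoint completely Markov operators in $\Linfty{\widehat{\G}}$ with contractive positive $\Rant^{\mathrm{u}}$-invariant functionals on $\CzU{\G}$. The delicate points are verifying positivity of the candidate slice functional at the universal level (rather than only on the reduced algebra), handling the matrix amplifications needed to pass from Markov to completely Markov, and matching the $\Rant^{\mathrm{u}}$-invariance with operator selfadjointness through the KMS twist for $\varphi$; all of these require careful multiplier-algebra calculations with $\Ww$ and $\wW$.
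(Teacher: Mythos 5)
First, note that the present paper does not prove this theorem at all: it is imported verbatim from \citep[Theorem 3.4]{Skalski_Viselter__convolution_semigroups}, so the benchmark is the argument of that earlier paper. Your treatment of \prettyref{enu:SV_thm_3_4__2}$\Leftrightarrow$\prettyref{enu:SV_thm_3_4__3} (general correspondence between selfadjoint completely Markov semigroups and completely Dirichlet forms, with $S_{t}\in\Linfty{\widehat{\G}}$ matching invariance of $Q$ under $\mathcal{U}(\Linfty{\widehat{\G}}')$ via the bicommutant theorem) and of \prettyref{enu:SV_thm_3_4__1}$\Rightarrow$\prettyref{enu:SV_thm_3_4__2} (using that $\mu\mapsto\widetilde{R}_{\mu}^{(2,\varphi)}$ is an injective homomorphism from $(\CzU{\G}^{*},\star)$, that positivity yields complete Markovianity, and that $\Rant^{\mathrm{u}}$-invariance yields KMS-symmetry and hence selfadjointness) is a reasonable sketch and broadly parallels the cited proof, although the selfadjointness and complete Markov steps are themselves nontrivial there.

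The genuine gap is the direction \prettyref{enu:SV_thm_3_4__2}$\Rightarrow$\prettyref{enu:SV_thm_3_4__1}, which is the principal content of the theorem and which you only assert as a ``quantum Bochner-type identification''. The concrete recipe you propose --- slicing $\wW$ against a state implemented by a suitable vector --- does not even produce an element of $\CzU{\G}^{*}$: slices of $\wW\in\M{\Cz{\G}\tensormin\CzU{\widehat{\G}}}$ land in $\CzU{\widehat{\G}}$ (or in $\M{\Cz{\G}}$ in the other leg), and there is no reason a single vector functional should encode the sought positive functional $\mu_{t}$. What is actually needed, and what the cited paper supplies, is multiplier theory: a selfadjoint completely Markov operator $S_{t}$ lying in $\Linfty{\widehat{\G}}$ is shown to be the $L^{2}$-implementation of a completely positive left multiplier of $\Lone{\G}$, and then the representation theorem for completely positive multipliers (Daws' quantum Bochner-type theorem) produces a positive $\mu_{t}\in\CzU{\G}^{*}$ with $S_{t}=\widetilde{R}_{\mu_{t}}^{(2,\varphi)}$; contractivity comes from the Markov property, $\Rant^{\mathrm{u}}$-invariance from KMS-symmetry, and only then do injectivity and multiplicativity of $\mu\mapsto\widetilde{R}_{\mu}^{(2,\varphi)}$ give the convolution-semigroup law and $w^{*}$-continuity. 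You explicitly defer exactly these points as ``the principal difficulty'', so the proposal as it stands does not establish the surjectivity statement and hence does not prove the theorem; it would need the completely bounded/completely positive multiplier machinery (or an equivalent substitute) to be carried out in full.
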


\begin{rem}
	\label{rem:SV_thm_3_4}A $w^{*}$-continuous convolution semigroup
	of contractive positive functionals can be normalised to form one
	consisting of \emph{states} \citep[Remark 3.3]{Skalski_Viselter__convolution_semigroups}.
	To restate the $1-1$ correspondence of \prettyref{thm:SV_thm_3_4}
	\prettyref{enu:SV_thm_3_4__1}$\Leftrightarrow$\prettyref{enu:SV_thm_3_4__3}
	for states one has to `normalise' the completely Dirichlet form
	as well; the following text should therefore be added to the statement
	of \prettyref{enu:SV_thm_3_4__3}: \emph{modulo subtracting from $Q$
	a positive multiple of the quadratic form $\left\Vert \cdot\right\Vert ^{2}$}.
\end{rem}

\section{\label{sec:dom_gen_func}The domains of generating functionals}

In Sections \ref{sec:dom_gen_func}\textendash \ref{sec:reconstruction}
we let $\G$ be a locally compact quantum group. In this section we show that given a convolution semigroup of states, for certain (twisted) Fourier transforms it is particularly easy to determine whether they belong to the domain of the generating functional (see \prettyref{prop:gamma_domain} \prettyref{enu:gamma_domain__1}). This fact is used to establish the containment of a dense $*$-subalgebra in the domain of any generating functional (\prettyref{thm:gen_func_domain}). Several examples of the form of this algebra are then described.

We begin by defining our (twisted) Fourier transforms.

\begin{defn}
For $\widehat{\om}\in\Lone{\widehat{\G}}$ denote $\aa{\widehat{\om}}:=\tau_{i/4}^{\mathrm{u}}((\widehat{\om}\tensor\i)(\widehat{\wW})) \in \CzU{\G}$, where we remind the reader that ${(\tau_t^\mathrm{u})}_{t\in\R}$ is the universal version of the scaling group, which is an action of $\R$ on $\CzU{\G}$.
\end{defn}

One shows just as in \citep[proof of Lemma 2.14]{Skalski_Viselter__convolution_semigroups}
that $\aa{\widehat{\om}}$ is well defined
and satisfies $\left\Vert \aa{\widehat{\om}}\right\Vert \le\left\Vert \widehat{\om}\right\Vert $
for every $\widehat{\om}\in\Lone{\widehat{\G}}$. For the convenience of the reader, we repeat the short proof. 
By the properties of the antipode we have \[(\i \tensor \widehat{\om})(\Ww) \in D(\Sant^\mathrm{u}) \text{ and } \Sant^\mathrm{u}((\i \tensor \widehat{\om})(\Ww)) = (\i \tensor \widehat{\om})(\Ww^*).\]
That is, $(\i \tensor \widehat{\om})(\Ww) \in D(\tau^\mathrm{u}_{-i/2})$ and $\tau^\mathrm{u}_{-i/2}((\i \tensor \widehat{\om})(\Ww)) = \Rant^\mathrm{u}((\i \tensor \widehat{\om})(\Ww^*))$. Taking adjoints, we obtain $(\overline{\widehat{\om}} \tensor \i)(\widehat{\wW}) \in D(\tau^\mathrm{u}_{i/2})$ and $\tau^\mathrm{u}_{i/2}((\overline{\widehat{\om}} \tensor \i)(\widehat{\wW})) = \Rant^\mathrm{u}((\i \tensor \overline{\widehat{\om}})(\Ww))$. Thus, $(\overline{\widehat{\om}} \tensor \i)(\widehat{\wW}) \in D(\tau^\mathrm{u}_{i/4})$, and furthermore, since $\| (\overline{\widehat{\om}} \tensor \i)(\widehat{\wW}) \|, \| \Rant^\mathrm{u}((\i \tensor \overline{\widehat{\om}})(\Ww)) \| \le \| \overline{\widehat{\om}} \|$, we infer from the Phragmen\textendash Lindel\"{o}f three lines theorem that $ \| \tau^\mathrm{u}_{i/4}((\overline{\widehat{\om}} \tensor \i)(\widehat{\wW})) \| \le \| \overline{\widehat{\om}} \|$, as desired.

The map $\widehat{\om}\mapsto\aa{\widehat{\om}}$
is an injective homomorphism from $\Lone{\widehat{\G}}$ to $\CzU{\G}$
because the maps $\tau_{i/4}^{\mathrm{u}}\colon D(\tau^\mathrm{u}_{i/4})\to\CzU{\G}$ and $\Lone{\widehat{\G}}\ni\widehat{\om}\mapsto(\widehat{\om}\tensor\i)(\widehat{\wW})$
are injective homomorphisms. We require the following result from
\citep{Skalski_Viselter__convolution_semigroups}.
\begin{prop}[{\citep[Proposition 3.8 and its proof]{Skalski_Viselter__convolution_semigroups}}]
\label{prop:SV_prop_3_8}\mbox{}
\begin{enumerate}
\item \label{enu:SV_prop_3_8__1}For every $\nu\in\CzU{\G}^{*}$ and $\widehat{\om}\in\Lone{\widehat{\G}}$
we have
\[
\widehat{\om}(\widetilde{R}_{\nu}^{(2,\varphi)})=\nu(\aa{\widehat{\om}}).
\]
\item \label{enu:SV_prop_3_8__2}Consider a $w^{*}$-continuous convolution
semigroup $\left(\mu_{t}\right)_{t\ge0}$ of $\Rant^{\mathrm{u}}$-invariant
contractive positive functionals on $\CzU \G$. Denote by $Q$ the completely Dirichlet form associated
to $\left(\mu_{t}\right)_{t\ge0}$, and let $\gamma$ be the generating
functional of $\left(\mu_{t}\right)_{t\ge0}$. Then $D(Q)=\{ \z\in\Ltwo{\G} : \aa{\widehat{\om}_{\z}}\in D(\gamma)\} $
and for every $\z\in D(Q)$ we have $Q\z=-\gamma(\aa{\widehat{\om}_{\z}})$.
\end{enumerate}
\end{prop}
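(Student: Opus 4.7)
\textbf{Part \prettyref{enu:SV_prop_3_8__1}} is a Fourier-duality identity, and my plan is to identify $\widetilde{R}_\nu^{(2,\varphi)}$ as a twisted slice of $\widehat{\wW}$ on the $\CzU{\G}$-leg. Concretely, the formula I would aim to establish is
\[
\widetilde{R}_\nu^{(2,\varphi)} \;=\; (\i \tensor \nu\circ\tau_{i/4}^{\mathrm{u}})(\widehat{\wW}),
\]
interpreted weakly by pairing against arbitrary $\widehat{\om} \in \Lone{\widehat{\G}}$. The slices $(\widehat{\om}\tensor\i)(\widehat{\wW})$ already lie in $D(\tau_{i/4}^{\mathrm{u}})$ with norm at most $\|\widehat{\om}\|$ (this is exactly the Phragmen--Lindel\"of computation reproduced in the text just before the statement), so $\widehat{\om}\mapsto \nu(\aa{\widehat{\om}})$ is a bounded linear functional on $\Lone{\widehat{\G}}$, and thus corresponds to a unique element of $\Linfty{\widehat{\G}}$. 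Once this element has been matched with the KMS-implementation $\widetilde{R}_\nu^{(2,\varphi)}$ built in \citep[Subsection 2.1]{Skalski_Viselter__convolution_semigroups}, the claimed identity is immediate by sliding slices past one another:
\[
\widehat{\om}\bigl(\widetilde{R}_\nu^{(2,\varphi)}\bigr) \;=\; (\nu\circ\tau_{i/4}^{\mathrm{u}})\bigl((\widehat{\om}\tensor\i)(\widehat{\wW})\bigr) \;=\; \nu(\aa{\widehat{\om}}).
\]

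\textbf{Part \prettyref{enu:SV_prop_3_8__2}} will then follow by feeding \prettyref{enu:SV_prop_3_8__1} into the spectral picture furnished by \prettyref{thm:SV_thm_3_4}. Writing $S_t := \widetilde{R}_{\mu_t}^{(2,\varphi)} = e^{-tA}$ with $A\ge 0$ selfadjoint, $D(A^{1/2}) = D(Q)$ and $Q\z = \|A^{1/2}\z\|^2$, specialising \prettyref{enu:SV_prop_3_8__1} to $\nu = \mu_t$ and $\widehat{\om} = \widehat{\om}_\z$ gives at once $\mu_t(\aa{\widehat{\om}_\z}) = \widehat{\om}_\z(S_t) = \langle \z, S_t\z\rangle$. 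Next I would compute $\epsilon(\aa{\widehat{\om}_\z}) = \|\z\|^2$ from the invariance $\epsilon\circ\tau_{i/4}^{\mathrm{u}} = \epsilon$ together with the standard co-unit identity $(\i\tensor\epsilon)(\widehat{\wW}) = \one$, so that the difference quotient defining $\gamma$ at $\aa{\widehat{\om}_\z}$ becomes
\[
\frac{\mu_t(\aa{\widehat{\om}_\z}) - \epsilon(\aa{\widehat{\om}_\z})}{t} \;=\; \frac{\langle \z,(S_t-\one)\z\rangle}{t}.
\]
The spectral theorem applied to $A$ then shows that the right-hand side converges as $t\to 0^+$ precisely when $\z \in D(A^{1/2}) = D(Q)$, with limit $-\|A^{1/2}\z\|^2 = -Q\z$; both claims of \prettyref{enu:SV_prop_3_8__2} follow simultaneously.

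The principal obstacle is the closed-form slice formula for $\widetilde{R}_\nu^{(2,\varphi)}$ underlying \prettyref{enu:SV_prop_3_8__1}. The twist $\nu\mapsto\nu\circ\tau_{i/4}^{\mathrm{u}}$ is only densely defined on $\CzU{\G}^*$, so one has to argue by a three-lines / Phragmen--Lindel\"of bound -- entirely parallel to the one reproduced just before the statement for $\aa{\widehat{\om}}$, but now on the other leg -- that the weak pairing extends to all of $\CzU{\G}^*$, and that the resulting element of $\Linfty{\widehat{\G}}$ really does agree with the KMS-implementation of $R_\nu$. The matching relies on the intertwining relation between $\widehat{\wW}$ and the GNS map $\Lambda_\varphi$, which encodes the KMS condition for $\varphi$; once this is in hand, everything else in both parts reduces to bookkeeping.
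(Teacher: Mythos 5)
Your part \prettyref{enu:SV_prop_3_8__2} is essentially complete and correct \emph{given} part \prettyref{enu:SV_prop_3_8__1}: specialising to $\nu=\mu_{t}$, computing $\epsilon(\aa{\widehat{\om}_{\z}})=\|\z\|^{2}$ (your route via $(\i\tensor\epsilon)(\widehat{\wW})=\one$ and $\epsilon\circ\tau_{i/4}^{\mathrm{u}}=\epsilon$ works, or simply take $\nu=\epsilon$ in part \prettyref{enu:SV_prop_3_8__1} since $\widetilde{R}_{\epsilon}^{(2,\varphi)}=\one$), and then applying the spectral theorem with the monotonicity of $t\mapsto(1-e^{-t\lambda})/t$ indeed yields both $D(Q)=\{\z:\aa{\widehat{\om}_{\z}}\in D(\gamma)\}$ and $Q\z=-\gamma(\aa{\widehat{\om}_{\z}})$ simultaneously. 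Bear in mind, though, that the present paper does not prove this proposition at all: it is imported from \citep[Proposition 3.8 and its proof]{Skalski_Viselter__convolution_semigroups}, so the benchmark is that proof, in which the reduction of \prettyref{enu:SV_prop_3_8__2} to \prettyref{enu:SV_prop_3_8__1} is exactly the kind of bookkeeping you describe.

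The genuine gap is in part \prettyref{enu:SV_prop_3_8__1}. What your argument actually establishes is only that $\widehat{\om}\mapsto\nu(\aa{\widehat{\om}})$ is bounded on $\Lone{\widehat{\G}}$ and hence represented by a unique element $x_{\nu}\in\Linfty{\widehat{\G}}$ \textemdash{} in other words, that the weak slice $(\i\tensor\nu\circ\tau_{i/4}^{\mathrm{u}})(\widehat{\wW})$ is meaningful. But the proposition \emph{is} the assertion that $x_{\nu}=\widetilde{R}_{\nu}^{(2,\varphi)}$, where $\widetilde{R}_{\nu}^{(2,\varphi)}$ is defined quite differently: it is the KMS implementation with respect to $\varphi$ of the map $R_{\nu}$ obtained by dualising $\om\mapsto\nu\star\om$ on $\Lone{\G}$. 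You explicitly defer this identification (``once this element has been matched with the KMS-implementation\ldots the claimed identity is immediate'') and in your final paragraph acknowledge it as the principal obstacle, offering only a pointer to ``the intertwining relation between $\widehat{\wW}$ and the GNS map $\Lambda_{\varphi}$''. That matching is the entire analytic content of the statement: one must unfold the definition of the KMS implementation on a dense set of $\varphi$-analytic elements, use the interplay between the modular group $\sigma^{\varphi}$, the scaling group and $W$ (this is precisely where the twist $\tau_{i/4}^{\mathrm{u}}$ originates), and run an analytic-continuation argument to pass to the universal leg. None of this is carried out, so as written part \prettyref{enu:SV_prop_3_8__1} \textemdash{} and with it the proposition \textemdash{} is assumed rather than proved; the plan is plausible and consistent with how the cited proof proceeds, but the decisive step is missing.
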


The twisted Fourier transforms interact in a natural manner with the unitary antipode.

\begin{lem}
\label{lem:D_plus}\mbox{}
\begin{enumerate}
\item \label{enu:D_plus__1}For every $\widehat{\om}\in\Lone{\widehat{\G}}$
we have $\aa{\widehat{\om}}^{*}=\aa{\overline{\widehat{\om}}\circ\widehat{\Rant}}$
and $\Rant^{\mathrm{u}}(\aa{\widehat{\om}})=\aa{\widehat{\om}\circ\widehat{\Rant}}$.
\item \label{enu:D_plus__2}For every $\widehat{\om}\in\Lone{\widehat{\G}}_{+}$
we have $\left\Vert \aa{\widehat{\om}}\right\Vert = \left\Vert \widehat{\om}\right\Vert =\epsilon(\aa{\widehat{\om}})$.
\item \label{enu:D_plus__3}For every $\widehat{\om}\in\Lone{\widehat{\G}}_{+}$
we have $\left[(\i+\Rant^{\mathrm{u}})(\epsilon(\cdot)\one-\i)\right](\aa{\widehat{\om}})=2\left(\left\Vert \aa{\widehat{\om}}\right\Vert\one-\Ree(\aa{\widehat{\om}})\right)\ge0$
in $\CzU{\G}^{\#}$.
\item \label{enu:D_plus__4}The set 
\[
\mathscr{D}_{+}:=\bigl\{\aa{\widehat{\om}}:\widehat{\om}\in\Lone{\widehat{\G}}_{+}\bigr\}\subseteq\CzU{\G}
\]
 is a cone, and it is selfadjoint, globally $\Rant^{\mathrm{u}}$-invariant,
and closed under multiplication.
\end{enumerate}
\end{lem}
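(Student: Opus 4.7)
The plan is to prove the four parts in the order (b), (a), (c), (d), since (b) is self-contained, (a) is the technical core, (c) is a direct consequence of (a) and (b), and (d) then collects everything with the $*$-homomorphism property of $\widehat{\om}\mapsto\aa{\widehat{\om}}$ noted in the text. The main ingredients are: the half-universal $R$-covariance of the multiplicative unitary, $(\Rant^{\mathrm{u}}\tensor\widehat{\Rant})(\Ww)=\Ww$ (equivalently, $\Rant^{\mathrm{u}}((\i\tensor\widehat{\om})(\Ww))=(\i\tensor\widehat{\om}\circ\widehat{\Rant})(\Ww)$); the adjoint-flip relation $((\i\tensor\widehat{\om})(\Ww))^{*}=(\overline{\widehat{\om}}\tensor\i)(\widehat{\wW})$, which is the half-universal reformulation of $\widehat{W}=\sigma W^{*}\sigma$; the compatibilities $\tau^{\mathrm{u}}_{z}(x)^{*}=\tau^{\mathrm{u}}_{\bar{z}}(x^{*})$ and $\Rant^{\mathrm{u}}\tau^{\mathrm{u}}_{z}=\tau^{\mathrm{u}}_{z}\Rant^{\mathrm{u}}$, together with $\Rant^{\mathrm{u}}$ being a $*$-anti-automorphism; and the counit identities $\epsilon\circ\tau^{\mathrm{u}}_{z}=\epsilon$ and $(\i\tensor\epsilon)(\widehat{\wW})=\one$.

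For (b), combining the $\tau^{\mathrm{u}}$-invariance of $\epsilon$ with the counit identity yields
$\epsilon(\aa{\widehat{\om}})=\epsilon((\widehat{\om}\tensor\i)(\widehat{\wW}))=(\widehat{\om}\tensor\epsilon)(\widehat{\wW})=\widehat{\om}(\one)$,
which equals $\|\widehat{\om}\|$ for $\widehat{\om}\in\Lone{\widehat{\G}}_{+}$. This, together with the contractivity $\|\aa{\widehat{\om}}\|\le\|\widehat{\om}\|$ already established in the text and the lower bound $|\epsilon(\aa{\widehat{\om}})|\le\|\aa{\widehat{\om}}\|$ (since $\epsilon$ is a state), forces all three quantities to coincide.

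For (a), I first prove the identity $\Rant^{\mathrm{u}}(\aa{\widehat{\om}})=\aa{\widehat{\om}\circ\widehat{\Rant}}$. Commuting $\Rant^{\mathrm{u}}$ past $\tau^{\mathrm{u}}_{i/4}$ reduces this to the slice-level identity $\Rant^{\mathrm{u}}((\widehat{\om}\tensor\i)(\widehat{\wW}))=(\widehat{\om}\circ\widehat{\Rant}\tensor\i)(\widehat{\wW})$, which I derive from the $R$-covariance of $\Ww$ by transporting it to the $\widehat{\wW}$-side via the adjoint-flip relation and the $*$-anti-automorphism property of $\Rant^{\mathrm{u}}$. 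For the $*$-identity, I write $\aa{\widehat{\om}}^{*}=\tau^{\mathrm{u}}_{-i/4}((\widehat{\om}\tensor\i)(\widehat{\wW})^{*})$ and apply the adjoint-flip relation in the form $(\widehat{\om}\tensor\i)(\widehat{\wW})^{*}=(\i\tensor\overline{\widehat{\om}})(\Ww)$, giving $\aa{\widehat{\om}}^{*}=\tau^{\mathrm{u}}_{-i/4}((\i\tensor\overline{\widehat{\om}})(\Ww))$. Substituting $(\i\tensor\overline{\widehat{\om}})(\Ww)=\Rant^{\mathrm{u}}(\tau^{\mathrm{u}}_{i/2}((\overline{\widehat{\om}}\tensor\i)(\widehat{\wW})))$, which is immediate from the identity $\tau^{\mathrm{u}}_{i/2}((\overline{\widehat{\om}}\tensor\i)(\widehat{\wW}))=\Rant^{\mathrm{u}}((\i\tensor\overline{\widehat{\om}})(\Ww))$ derived in the excerpt, and using the commutation of $\Rant^{\mathrm{u}}$ with $\tau^{\mathrm{u}}$ to combine $\tau^{\mathrm{u}}_{-i/4}\tau^{\mathrm{u}}_{i/2}=\tau^{\mathrm{u}}_{i/4}$, I obtain $\aa{\widehat{\om}}^{*}=\Rant^{\mathrm{u}}(\aa{\overline{\widehat{\om}}})$, which by the $\Rant^{\mathrm{u}}$-identity already established equals $\aa{\overline{\widehat{\om}}\circ\widehat{\Rant}}$.

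For (c) and (d), everything is then formal. In (c), for $\widehat{\om}\in\Lone{\widehat{\G}}_{+}$ one has $\overline{\widehat{\om}}=\widehat{\om}$, so (a) gives $\aa{\widehat{\om}}^{*}=\Rant^{\mathrm{u}}(\aa{\widehat{\om}})$, hence $\aa{\widehat{\om}}+\Rant^{\mathrm{u}}(\aa{\widehat{\om}})=2\Ree(\aa{\widehat{\om}})$; expanding $(\i+\Rant^{\mathrm{u}})(\epsilon(\cdot)\one-\i)(\aa{\widehat{\om}})$ and inserting $\epsilon(\aa{\widehat{\om}})=\|\aa{\widehat{\om}}\|$ from (b) yields the displayed equality, and the bound $\Ree(\aa{\widehat{\om}})\le\|\Ree(\aa{\widehat{\om}})\|\one\le\|\aa{\widehat{\om}}\|\one$ (valid for any self-adjoint element) delivers the positivity. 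In (d), the cone property follows from linearity of $\aa{\cdot}$ applied to the cone $\Lone{\widehat{\G}}_{+}$; selfadjointness and $\Rant^{\mathrm{u}}$-invariance are immediate from (a) together with the observation that $\widehat{\om}\circ\widehat{\Rant}\in\Lone{\widehat{\G}}_{+}$ whenever $\widehat{\om}\in\Lone{\widehat{\G}}_{+}$ (since $\widehat{\Rant}$ is a $*$-anti-automorphism); and closure under multiplication follows from the homomorphism property of $\aa{\cdot}$ noted in the text, combined with closure of $\Lone{\widehat{\G}}_{+}$ under the convolution $\widehat{\om}_{1}\star\widehat{\om}_{2}=(\widehat{\om}_{1}\tensor\widehat{\om}_{2})\circ\widehat{\Delta}$. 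The main bookkeeping obstacle throughout is keeping straight which of the half-universal unitaries $\Ww,\widehat{\wW}$ is being sliced in which leg, and switching between the antipode covariance relations phrased at the level of $\Ww$ and those phrased at the level of $\widehat{\wW}$; once this bookkeeping is sorted out, the computations themselves are short.
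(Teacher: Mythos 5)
Your proposal is correct and follows essentially the same route as the paper: the same covariance relations for the half-universal multiplicative unitaries and the commutation of $\Rant^{\mathrm{u}}$ with $\tau^{\mathrm{u}}$ drive part (a), the chain $\left\Vert \aa{\widehat{\om}}\right\Vert \le\left\Vert \widehat{\om}\right\Vert =\widehat{\om}(\one)=\epsilon(\aa{\widehat{\om}})\le\left\Vert \aa{\widehat{\om}}\right\Vert$ gives (b), and (c), (d) are the same formal consequences via the homomorphism property of $\widehat{\om}\mapsto\aa{\widehat{\om}}$. The only (harmless) cosmetic differences are that you make the co-unit evaluation in (b) explicit via $(\i\tensor\epsilon)(\widehat{\wW})=\one$ and that in (a) you conclude through $\Rant^{\mathrm{u}}(\aa{\overline{\widehat{\om}}})$ and the already-proved $\Rant^{\mathrm{u}}$-identity, whereas the paper pushes $\widehat{\Rant}$ through the slice directly.
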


\begin{proof}
\prettyref{enu:D_plus__1} Let $\widehat{\om}\in\Lone{\widehat{\G}}$.
The second identity follows readily since $\Rant^{\mathrm{u}}$ commutes
with $\tau^{\mathrm{u}}$ and $(\widehat{\Rant}\tensor\Rant^{\mathrm{u}})(\widehat{\wW})=\widehat{\wW}$.
Similarly, since $\Sant^{\mathrm{u}}=\Rant^{\mathrm{u}}\circ\tau_{-i/2}^{\mathrm{u}}$,
we have
\[
\begin{split}\aa{\widehat{\om}}^{*}=\tau_{i/4}^{\mathrm{u}}\bigl((\widehat{\om}\tensor\i)(\widehat{\wW})\bigr)^{*} & =\tau_{-i/4}^{\mathrm{u}}\bigl((\overline{\widehat{\om}}\tensor\i)(\widehat{\wW}^{*})\bigr)=\tau_{-i/4}^{\mathrm{u}}\bigl((\i\tensor\overline{\widehat{\om}})(\Ww)\bigr)\\
 & =\tau_{-i/4}^{\mathrm{u}}\bigl[(\Sant^{\mathrm{u}})^{-1}\bigl((\i\tensor\overline{\widehat{\om}})(\Ww^{*})\bigr)\bigr]=\tau_{i/4}^{\mathrm{u}}\bigl[\Rant^{\mathrm{u}}\bigl((\i\tensor\overline{\widehat{\om}})(\Ww^{*})\bigr)\bigr]\\
 & =\tau_{i/4}^{\mathrm{u}}\bigl((\i\tensor(\overline{\widehat{\om}}\circ\widehat{\Rant}))(\Ww^{*})\bigr)=\tau_{i/4}^{\mathrm{u}}\bigl(((\overline{\widehat{\om}}\circ\widehat{\Rant})\tensor\i)(\widehat{\wW})\bigr)=\aa{\overline{\widehat{\om}}\circ\widehat{\Rant}}.
\end{split}
\]

\prettyref{enu:D_plus__2} For every $\widehat{\om}\in\Lone{\widehat{\G}}_{+}$
we have $\left\Vert \aa{\widehat{\om}}\right\Vert \le\left\Vert \widehat{\om}\right\Vert =\widehat{\om}(\one)=\epsilon(\aa{\widehat{\om}}) \le \left\Vert \aa{\widehat{\om}}\right\Vert$.

\prettyref{enu:D_plus__3} Combine \prettyref{enu:D_plus__1} with \prettyref{enu:D_plus__2}.

\prettyref{enu:D_plus__4} Since $\Lone{\widehat{\G}}_{+}$ is a cone
in $\Lone{\widehat{\G}}$ that is closed under convolution and the
map $\Lone{\widehat{\G}}\ni\widehat{\om}\mapsto\aa{\widehat{\om}}$
is a homomorphism, $\mathscr{D}_{+}$ is a cone that is closed under
multiplication. Selfadjointness and global $\Rant^{\mathrm{u}}$-invariance
of $\mathscr{D}_{+}$ follow from \prettyref{enu:D_plus__1}.
\end{proof}

Although generating functionals are in general not closed in any natural topology, so that the notion of a core of these objects does not make sense as such, the cone $\mathscr{D}_{+}$ introduced above possesses certain core-like properties, as the next proposition shows.

\begin{prop}
\label{prop:gen_func_conv_semi_detrm_by_D_plus}
The generating functional $\gamma$ of a $w^{*}$-continuous
convolution semigroup of $\Rant^{\mathrm{u}}$-invariant contractive positive functionals on
$\CzU \G$ is uniquely determined by its behaviour on $\mathscr{D}_{+}$ (namely,
by $\mathscr{D}_{+}\cap D(\gamma)$ and by $\gamma|_{\mathscr{D}_{+}\cap D(\gamma)}$).
\end{prop}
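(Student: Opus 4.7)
The plan is to use the correspondences of Theorem~\ref{thm:SV_thm_3_4} together with the domain description of Proposition~\ref{prop:SV_prop_3_8}\prettyref{enu:SV_prop_3_8__2} to reduce the assertion to the uniqueness of the associated completely Dirichlet form. Specifically, suppose that $(\mu^{(1)}_t)_{t\ge 0}$ and $(\mu^{(2)}_t)_{t\ge 0}$ are two $w^*$-continuous convolution semigroups of $\Rant^{\mathrm{u}}$-invariant contractive positive functionals on $\CzU{\G}$, with generating functionals $\gamma_1$ and $\gamma_2$ satisfying $\mathscr{D}_+ \cap D(\gamma_1) = \mathscr{D}_+ \cap D(\gamma_2)$, and coinciding on this common subset. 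Since the generating functional of a given semigroup is a canonical object (defined by a pointwise limit), it suffices to show that the two semigroups themselves agree.

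The key, almost trivial, observation is that for every $\z \in \Ltwo{\G}$ the vector functional $\widehat\om_\z$ lies in $\Lone{\widehat\G}_+$, whence $\aa{\widehat\om_\z} \in \mathscr{D}_+$. Let $Q_1, Q_2$ be the completely Dirichlet forms associated via Theorem~\ref{thm:SV_thm_3_4} to the semigroups $(\mu^{(1)}_t)_{t\ge 0}$ and $(\mu^{(2)}_t)_{t\ge 0}$, respectively. Proposition~\ref{prop:SV_prop_3_8}\prettyref{enu:SV_prop_3_8__2} then yields, for $i \in \{1,2\}$,
\[
D(Q_i) = \{\z \in \Ltwo{\G} : \aa{\widehat\om_\z} \in \mathscr{D}_+ \cap D(\gamma_i)\}, \qquad Q_i\z = -\gamma_i\bigl(\aa{\widehat\om_\z}\bigr) \quad (\z \in D(Q_i)),
\]
so each $Q_i$ together with its domain is completely determined by the datum $\gamma_i|_{\mathscr{D}_+ \cap D(\gamma_i)}$. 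Our hypothesis thus forces $Q_1 = Q_2$, and the bijection of Theorem~\ref{thm:SV_thm_3_4} gives $(\mu^{(1)}_t)_{t\ge 0} = (\mu^{(2)}_t)_{t\ge 0}$, as required.

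No serious obstacle is anticipated here: the substantive work has already been carried out in the structural results of \cite{Skalski_Viselter__convolution_semigroups} that we just quoted, and the argument reduces to the observation that $\{\aa{\widehat\om_\z} : \z \in \Ltwo{\G}\} \subseteq \mathscr{D}_+$, which is what allows the domain description of Proposition~\ref{prop:SV_prop_3_8}\prettyref{enu:SV_prop_3_8__2} to be localised to the cone $\mathscr{D}_+$ without any loss of information.
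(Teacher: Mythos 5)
Your proposal is correct and follows essentially the same route as the paper's own proof: Proposition~\ref{prop:SV_prop_3_8}~\prettyref{enu:SV_prop_3_8__2} (together with the observation that $\aa{\widehat\om_\z}\in\mathscr{D}_+$ for every $\z\in\Ltwo\G$) shows the associated completely Dirichlet form is determined by $\gamma|_{\mathscr{D}_+\cap D(\gamma)}$, and the correspondence \prettyref{enu:SV_thm_3_4__1}$\Leftrightarrow$\prettyref{enu:SV_thm_3_4__3} of Theorem~\ref{thm:SV_thm_3_4} then recovers the semigroup, hence its generating functional.
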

\begin{proof}
 \prettyref{prop:SV_prop_3_8}~\prettyref{enu:SV_prop_3_8__2} implies that the behaviour of $\gamma$ on $\mathscr{D}_{+}$ determines uniquely the completely Dirichlet form associated to the respective convolution semigroup, and hence, by \prettyref{thm:SV_thm_3_4} correspondence \prettyref{enu:SV_thm_3_4__1}$\Leftrightarrow$\prettyref{enu:SV_thm_3_4__3} also the convolution semigroup itself. Thus it also determines its generator, $\gamma$.
\end{proof}

Denote by $\overline{\mathscr{D}_{+}}$ the norm closure of $\mathscr{D}_{+}$
in $\CzU{\G}$, which equals its $w$-closure in $\CzU{\G}$ by \prettyref{lem:D_plus}~\prettyref{enu:D_plus__4}
and the Hahn\textendash Banach theorem, as it is convex by Lemma \ref{lem:D_plus} \ref{enu:D_plus__4}.
\begin{prop}
\label{prop:gamma_domain}Let $\left(\mu_{t}\right)_{t\ge0}$ be a
$w^{*}$-continuous convolution semigroup of $\Rant^{\mathrm{u}}$-invariant
contractive positive functionals on $\CzU{\G}$ and $\gamma$ be its generating functional. 
\begin{enumerate}
\item \label{enu:gamma_domain__1}For every $a\in\overline{\mathscr{D}_{+}}$,
the function $\left(0,\infty\right)\ni t\mapsto\frac{1}{t}(\epsilon-\mu_{t})(a)$
is non-negative and decreasing, and $a\in D(\gamma)$ if and only
if $\left\{ \frac{1}{t}(\epsilon-\mu_{t})(a): t>0\right\} $ is bounded.
\item \label{enu:gamma_domain__2}The set $\mathscr{D}_{+}\cap D(\gamma)$
is total in $\CzU{\G}$.
\item \label{enu:gamma_domain__3}The following lower semi-continuity property
of $\gamma$ holds: if $\left(a_{i}\right)_{i\in\mathcal{I}}$ is
a net in $\overline{\mathscr{D}_{+}}\cap D(\gamma)$ converging in
the $w$-topology of $\CzU{\G}$ to some $a\in\CzU{\G}$ and $\liminf_{i\in\mathcal{I}}(-\gamma(a_{i}))<\infty$,
then $a\in D(\gamma)$ and $0\le-\gamma(a)\le\liminf_{i\in\mathcal{I}}(-\gamma(a_{i}))$.
\end{enumerate}
\end{prop}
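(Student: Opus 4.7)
The plan is to handle the three parts in order, using \prettyref{prop:SV_prop_3_8} throughout to convert statements about $\mu_t$ and $\gamma$ on the cone $\mathscr D_+$ into spectral statements about the $C_0$-semigroup $(S_t)_{t\ge 0}=(e^{-tA})_{t\ge 0}$ and the quadratic form $Q=\|A^{1/2}\cdot\|^2$ furnished by \prettyref{thm:SV_thm_3_4}, and extending to $\overline{\mathscr D_+}$ by norm and weak approximation.

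For \prettyref{enu:gamma_domain__1}, a generic element $a=\aa{\widehat{\om}}\in\mathscr D_+$ with $\widehat{\om}\in\Lone{\widehat{\G}}_+$ satisfies $\epsilon(a)=\widehat{\om}(\one)$ by \prettyref{lem:D_plus}~\prettyref{enu:D_plus__2} and $\mu_t(a)=\widehat{\om}(\widetilde R_{\mu_t}^{(2,\varphi)})=\widehat{\om}(S_t)$ by \prettyref{prop:SV_prop_3_8}~\prettyref{enu:SV_prop_3_8__1}, so
\[
\tfrac{1}{t}(\epsilon-\mu_t)(a)=\widehat{\om}\!\left(\tfrac{\one-e^{-tA}}{t}\right).
\]
The elementary fact that $t\mapsto(1-e^{-ts})/t$ is non-negative and decreasing in $t$ for each fixed $s\ge 0$ (its $t$-derivative is $[e^{-ts}(1+ts)-1]/t^2\le 0$), together with the spectral theorem for $A$, shows that the family $\{(\one-S_t)/t\}_{t>0}$ consists of bounded positive operators that increases as $t$ decreases; evaluating at $\widehat{\om}\in\Lone{\widehat{\G}}_+$ preserves both properties on $\mathscr D_+$. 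Extension to arbitrary $a\in\overline{\mathscr D_+}$ is immediate from $|(\epsilon-\mu_t)(a_n-a)|\le 2\|a_n-a\|$, which gives pointwise convergence for each $t$, so non-negativity and monotonicity pass to the limit. The characterisation of $D(\gamma)\cap\overline{\mathscr D_+}$ then follows because a non-negative monotone function as $t\to 0^+$ has a limit in $[0,\infty]$, finite precisely when the family is bounded.

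For \prettyref{enu:gamma_domain__2}, the crucial preliminary is the norm-density of $\mathscr D:=\linspan\mathscr D_+$ in $\CzU{\G}$. By Hahn\textendash Banach it suffices to show that any $\nu\in\CzU{\G}^*$ vanishing on every $\aa{\widehat{\om}}$ is zero; \prettyref{prop:SV_prop_3_8}~\prettyref{enu:SV_prop_3_8__1} rewrites $\nu(\aa{\widehat{\om}})=\widehat{\om}(\widetilde R_\nu^{(2,\varphi)})$, so the hypothesis forces $\widetilde R_\nu^{(2,\varphi)}=0$, hence $\nu=0$ by the injectivity of $\nu\mapsto\widetilde R_\nu^{(2,\varphi)}$. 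Next, \prettyref{prop:SV_prop_3_8}~\prettyref{enu:SV_prop_3_8__2} yields the inclusion $\{\aa{\widehat{\om}_{\z}}:\z\in D(Q)\}\subseteq\mathscr D_+\cap D(\gamma)$. The composition $\z\mapsto\widehat{\om}_{\z}\mapsto\aa{\widehat{\om}_{\z}}$ is norm-continuous from $\Ltwo{\G}$ to $\CzU{\G}$ (via $\|\widehat{\om}_{\z}-\widehat{\om}_\eta\|\le(\|\z\|+\|\eta\|)\|\z-\eta\|$ and the norm-decreasing property of $\widehat{\om}\mapsto\aa{\widehat{\om}}$); combined with density of $D(Q)$ in $\Ltwo{\G}$ and the standard-form description $\Lone{\widehat{\G}}_+=\{\widehat\om_\z:\z\in\Ltwo{\G}\}$, this renders $\{\aa{\widehat{\om}_{\z}}:\z\in D(Q)\}$ norm-dense in $\mathscr D_+$. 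Passing to linear spans, $\linspan(\mathscr D_+\cap D(\gamma))$ is dense in $\mathscr D$, hence in $\CzU{\G}$.

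For \prettyref{enu:gamma_domain__3}, applying \prettyref{enu:gamma_domain__1} to each $a_i$ gives $\tfrac{1}{t}(\epsilon-\mu_t)(a_i)\le-\gamma(a_i)$ for all $t>0$; weak convergence $a_i\to a$ together with continuity of $\epsilon,\mu_t$ yields $\tfrac{1}{t}(\epsilon-\mu_t)(a)=\lim_i\tfrac{1}{t}(\epsilon-\mu_t)(a_i)\le\liminf_i(-\gamma(a_i))<\infty$ for every fixed $t>0$. The $w$-closedness of $\overline{\mathscr D_+}$ noted just before the proposition forces $a\in\overline{\mathscr D_+}$, and applying \prettyref{enu:gamma_domain__1} once more the uniform bound shows $a\in D(\gamma)$ with $0\le-\gamma(a)=\sup_{t>0}\tfrac{1}{t}(\epsilon-\mu_t)(a)\le\liminf_i(-\gamma(a_i))$. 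The most delicate step throughout is the density of $\mathscr D$ in $\CzU{\G}$ used in part \prettyref{enu:gamma_domain__2}: a direct attack would involve analyticity considerations for the universal scaling group $\tau^{\mathrm u}$, whereas the duality argument above reduces matters to the injectivity of $\nu\mapsto\widetilde R_\nu^{(2,\varphi)}$, which is already in hand from \prettyref{prop:SV_prop_3_8}.
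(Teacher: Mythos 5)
Your proof is correct and follows essentially the same route as the paper: functional calculus for $A$ via \prettyref{prop:SV_prop_3_8}~\prettyref{enu:SV_prop_3_8__1} for part \prettyref{enu:gamma_domain__1}, a Hahn--Banach/annihilator argument reduced to the injectivity of $\nu\mapsto\widetilde R_\nu^{(2,\varphi)}$ for part \prettyref{enu:gamma_domain__2}, and the monotone bound $\frac{1}{t}(\epsilon-\mu_t)(a_i)\le-\gamma(a_i)$ passed through the weak limit for part \prettyref{enu:gamma_domain__3}. The only (harmless) deviation is in \prettyref{enu:gamma_domain__2}, where the paper applies the annihilator argument directly to $\mathscr{D}_{+}\cap D(\gamma)$ using only the vectors $\z\in D(Q)$, whereas you additionally invoke the standard-form identification $\Lone{\widehat{\G}}_{+}=\{\widehat{\om}_{\z}:\z\in\Ltwo{\G}\}$ to pass through norm-density of $\{\aa{\widehat{\om}_{\z}}:\z\in D(Q)\}$ in $\mathscr{D}_{+}$ (and note that the injectivity you use is recorded in the preliminaries rather than in \prettyref{prop:SV_prop_3_8} itself) -- both steps are valid.
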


\begin{proof}
\prettyref{enu:gamma_domain__1} Let $A$ be the (generally unbounded)
positive selfadjoint operator on $\Ltwo{\G}$ such that $\widetilde{R}_{\mu_{t}}^{(2,\varphi)}=e^{-tA}$
for every $t\ge0$ (see \prettyref{thm:SV_thm_3_4}).
Let first $a\in\mathscr{D}_{+}$ and write $a=\aa{\widehat{\om}}$,
$\widehat{\om}\in\Lone{\widehat{\G}}_{+}$. For every $t>0$ we have
$\frac{1}{t}(\epsilon-\mu_{t})(a)=\widehat{\om}(\widetilde{R}_{\frac{1}{t}(\epsilon-\mu_{t})}^{(2,\varphi)})=\widehat{\om}(\frac{1}{t}(\one-e^{-tA}))$
by \prettyref{prop:SV_prop_3_8}~\prettyref{enu:SV_prop_3_8__1}.
The first assertion, namely that $\left(0,\infty\right)\ni t\mapsto\frac{1}{t}(\epsilon-\mu_{t})(a)$
is non-negative and decreasing, thus follows from functional calculus
as the function $\left(0,\infty\right)\ni t\mapsto\frac{1}{t}(1-e^{-t})$
is non-negative and decreasing. Consequently, this first assertion is readily seen
to hold for $a\in\overline{\mathscr{D}_{+}}$. The second assertion
is now immediate.

\prettyref{enu:gamma_domain__2} Suppose that $\nu\in\CzU{\G}^{*}$
satisfies $\nu(\mathscr{D}_{+}\cap D(\gamma))=\left\{ 0\right\} $.
Then by \prettyref{prop:SV_prop_3_8} we have $\widehat{\om}_{\z}(\widetilde{R}_{\nu}^{(2,\varphi)})=0$
for every $\z$ in the dense subspace $D(Q)$ of $\Ltwo{\G}$,
so that $\widetilde{R}_{\nu}^{(2,\varphi)}=0$. This is equivalent to $R_{\nu}=0$,
hence $\nu=0$ by \citep[Theorem 2.1 (a)]{Skalski_Viselter__convolution_semigroups} (alternatively, use \citep[Lemma 2.17, (c)$\implies$(a)]{Skalski_Viselter__convolution_semigroups}).

\prettyref{enu:gamma_domain__3} Let $\left(a_{i}\right)_{i\in\mathcal{I}}$
be a net in $\overline{\mathscr{D}_{+}}\cap D(\gamma)$ that converges
to $a\in\CzU{\G}$ in the $w$-topology of $\CzU{\G}$ (so that $a\in\overline{\mathscr{D}_{+}}$).
By \prettyref{enu:gamma_domain__1} we have 
\[
0\le\frac{1}{t}(\epsilon-\mu_{t})(a_{i})\le-\gamma(a_{i})\qquad(\forall_{0<t}\forall_{i\in\mathcal{I}}).
\]
Taking the limit (inferior) as $i\in\mathcal{I}$ we get $0\le\frac{1}{t}(\epsilon-\mu_{t})(a)\le\liminf_{i\in\mathcal{I}}(-\gamma(a_{i}))$
for $0<t$. Applying \prettyref{enu:gamma_domain__1} again we deduce
that if $\liminf_{i\in\mathcal{I}}(-\gamma(a_{i}))<\infty$ then $a\in D(\gamma)$
and $0\le-\gamma(a)\le\liminf_{i\in\mathcal{I}}(-\gamma(a_{i}))$.
\end{proof}
\begin{cor}
\label{cor:gamma_domain_domination}Let $\left(\mu_{t}\right)_{t\ge0}$
be a $w^{*}$-continuous convolution semigroup of $\Rant^{\mathrm{u}}$-invariant
contractive positive functionals of $\CzU{\G}$ and $\gamma$ be its generating functional.
Also let $\widehat{\om}_{1},\widehat{\om}_{2}\in\Lone{\widehat{\G}}_{+}$
and assume that $\aa{\widehat{\om}_{2}}\in D(\gamma)$.
\begin{enumerate}
\item \label{enu:gamma_domain_domination__1}If $\widehat{\om}_{1}\le\widehat{\om}_{2}$
then $\aa{\widehat{\om}_{1}}\in D(\gamma)$.
\item \label{enu:gamma_domain_domination__2}If $\left(\mu_{t}\right)_{t\ge0}$ consists of states and $\left[(\i+\Rant^{\mathrm{u}})(\epsilon(\cdot)\one-\i)\right](\aa{\widehat{\om}_{1}})\le\left[(\i+\Rant^{\mathrm{u}})(\epsilon(\cdot)\one-\i)\right](\aa{\widehat{\om}_{2}})$
in $\CzU{\G}^{\#}$ then $\aa{\widehat{\om}_{1}}\in D(\gamma)$.
\end{enumerate}
\end{cor}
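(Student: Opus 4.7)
The common strategy is to bound $\frac{1}{t}(\epsilon - \mu_t)(\aa{\widehat{\om}_1})$ uniformly in $t > 0$ by the analogous quantity for $\aa{\widehat{\om}_2}$, which by hypothesis and \prettyref{prop:gamma_domain}~\prettyref{enu:gamma_domain__1} is itself bounded above by $-\gamma(\aa{\widehat{\om}_2}) < \infty$ for every $t > 0$. Since $\aa{\widehat{\om}_1} \in \mathscr{D}_{+} \subseteq \overline{\mathscr{D}_{+}}$, the same proposition will then deliver $\aa{\widehat{\om}_1} \in D(\gamma)$. So the real task in each case is to establish the pointwise inequality of the associated difference-quotients.

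For \prettyref{enu:gamma_domain_domination__1} I would invoke the identity
$\frac{1}{t}(\epsilon - \mu_t)(\aa{\widehat{\om}}) = \widehat{\om}\bigl(\frac{1}{t}(\one - e^{-tA})\bigr)$
already derived inside the proof of \prettyref{prop:gamma_domain}~\prettyref{enu:gamma_domain__1}, where $A$ is the positive operator satisfying $e^{-tA} = \widetilde{R}_{\mu_t}^{(2,\varphi)}$. Since $\frac{1}{t}(\one - e^{-tA})$ is a positive element of $\Linfty{\widehat{\G}}$, the hypothesis $\widehat{\om}_1 \le \widehat{\om}_2$ in $\Lone{\widehat{\G}}$ transfers immediately to the desired comparison. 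This is the easy case.

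Part \prettyref{enu:gamma_domain_domination__2} demands a little more care. By \prettyref{lem:D_plus}~\prettyref{enu:D_plus__3} the hypothesis is equivalent to
\[
\|\widehat{\om}_1\|\one - \Ree(\aa{\widehat{\om}_1}) \le \|\widehat{\om}_2\|\one - \Ree(\aa{\widehat{\om}_2})
\]
in $\CzU{\G}^{\#}$. Evaluating the state $\mu_t$ (and likewise $\epsilon$) on both sides, using $\mu_t(\one) = 1 = \epsilon(\one)$ together with $\epsilon(\aa{\widehat{\om}_i}) = \|\widehat{\om}_i\|$ from \prettyref{lem:D_plus}~\prettyref{enu:D_plus__2}, I would reduce the claim to $(\epsilon - \mu_t)(\aa{\widehat{\om}_1}) \le (\epsilon - \mu_t)(\aa{\widehat{\om}_2})$, after which the proof concludes as in part \prettyref{enu:gamma_domain_domination__1}. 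The main obstacle in this reduction is that $\aa{\widehat{\om}_i}$ is not self-adjoint, so \emph{a priori} $\mu_t(\aa{\widehat{\om}_i})$ need not agree with $\mu_t(\Ree(\aa{\widehat{\om}_i}))$, which is what the above evaluation actually produces. I would dispose of this by combining $\Rant^{\mathrm{u}}$-invariance of $\mu_t$ and of $\epsilon$ with the identity $\Rant^{\mathrm{u}}(\aa{\widehat{\om}_i}) = \aa{\widehat{\om}_i}^*$ from \prettyref{lem:D_plus}~\prettyref{enu:D_plus__1} (valid because the positive functional $\widehat{\om}_i$ is hermitian), which forces $\mu_t(\aa{\widehat{\om}_i})$ and $\epsilon(\aa{\widehat{\om}_i})$ to be real and therefore to coincide with the corresponding values on the real parts.
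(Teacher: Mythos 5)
Your proposal is correct and follows essentially the same route as the paper: both parts are reduced to the inequality $(\epsilon-\mu_{t})(\aa{\widehat{\om}_{1}})\le(\epsilon-\mu_{t})(\aa{\widehat{\om}_{2}})$ for all $t>0$ and then concluded via \prettyref{prop:gamma_domain}~\prettyref{enu:gamma_domain__1}. Your handling of (a) just unfolds the paper's observation that $\aa{\widehat{\om}_{2}}-\aa{\widehat{\om}_{1}}=\aa{\widehat{\om}_{2}-\widehat{\om}_{1}}\in\mathscr{D}_{+}$ into the spectral identity, and in (b) your real-part argument via $\Rant^{\mathrm{u}}(\aa{\widehat{\om}_{i}})=\aa{\widehat{\om}_{i}}^{*}$ is a valid repackaging of the paper's direct computation $\mu_{t}\{[(\i+\Rant^{\mathrm{u}})(\epsilon(\cdot)\one-\i)](\aa{\widehat{\om}})\}=2(\epsilon-\mu_{t})(\aa{\widehat{\om}})$.
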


\begin{proof}
In both cases we will show that 
\begin{equation}
(\epsilon-\mu_{t})(\aa{\widehat{\om}_{1}})\le(\epsilon-\mu_{t})(\aa{\widehat{\om}_{2}})\qquad(\forall_{t>0}),\label{eq:gamma_domain_domination}
\end{equation}
hence $\aa{\widehat{\om}_{1}}\in D(\gamma)$ by \prettyref{prop:gamma_domain}~\prettyref{enu:gamma_domain__1}.

\prettyref{enu:gamma_domain_domination__1} By assumption, $\aa{\widehat{\om}_{2}}-\aa{\widehat{\om}_{1}}=\aa{\widehat{\om}_{2}-\widehat{\om}_{1}}\in\mathscr{D}_{+}$.
\prettyref{prop:gamma_domain}~\prettyref{enu:gamma_domain__1} thus
implies \prettyref{eq:gamma_domain_domination}.

\prettyref{enu:gamma_domain_domination__2} For all $\widehat{\om}\in\Lone{\widehat{\G}}$
and $t>0$ we have $\mu_{t}\left\{ \left[(\i+\Rant^{\mathrm{u}})(\epsilon(\cdot)\one-\i)\right](\aa{\widehat{\om}})\right\} =2(\epsilon-\mu_{t})(\aa{\widehat{\om}})$.
Thus, the assumed inequality entails \prettyref{eq:gamma_domain_domination}.
\end{proof}

The following lemma is elementary.
\begin{lem}
\label{lem:mu_a_1_a_2}Let $B$ be a C$^{*}$-algebra, $\mu\in B^{*}$
be a state and $a_{1},a_{2}\in B$ be contractions. Write $c_{i}:=1-\Ree\mu(a_{i})$,
$i=1,2$. Then $1-\Ree\mu(a_{1}a_{2})\le c_{1}+c_{2}+2\sqrt{c_{1}c_{2}}$.
\end{lem}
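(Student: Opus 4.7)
The plan is to pass to the GNS representation of $\mu$, since everything in the statement depends only on the values $\mu(a_i), \mu(a_i^*), \mu(a_1 a_2)$, and since the quantities $c_i = 1-\Ree\mu(a_i)$ naturally turn into squared-distance-type expressions in the GNS Hilbert space. If $B$ is not unital, one first extends $\mu$ to $B^\#$ by $\mu(\one):=1$; let $(\pi, \H, \xi)$ denote the resulting cyclic representation, so that $\mu(x)=\langle \xi, \pi(x)\xi\rangle$ for $x \in B^\#$ and $\|\xi\|=1$.

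Next, I would introduce the auxiliary vectors $\eta_i:=\xi-\pi(a_i)\xi$ and $\tilde\eta_i:=\xi-\pi(a_i^*)\xi$ in $\H$. Because $a_i$, and hence $\pi(a_i)$ and $\pi(a_i^*)$, are contractions, a direct expansion of $\|\xi-\pi(a_i)\xi\|^2$ gives
\[
\|\eta_i\|^2 \;=\; 1 - 2\Ree\mu(a_i) + \|\pi(a_i)\xi\|^2 \;\le\; 2c_i,
\]
and analogously $\|\tilde\eta_i\|^2 \le 2c_i$; in particular each $c_i$ is non-negative and $\sqrt{c_i}$ is well-defined. Moreover $\Ree\langle \xi,\eta_i\rangle = \Ree\langle \xi,\tilde\eta_i\rangle = c_i$.

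The key algebraic identity is the telescoping
\[
\xi - \pi(a_1 a_2)\xi \;=\; \eta_1 + \pi(a_1)\eta_2.
\]
Pairing with $\xi$, taking real parts, and then rewriting $\pi(a_1)^*\xi = \pi(a_1^*)\xi = \xi - \tilde\eta_1$, I obtain
\[
1-\Ree\mu(a_1 a_2) \;=\; \Ree\langle\xi,\eta_1\rangle + \Ree\langle\xi,\eta_2\rangle - \Ree\langle\tilde\eta_1,\eta_2\rangle \;=\; c_1 + c_2 - \Ree\langle\tilde\eta_1,\eta_2\rangle.
\]
A Cauchy-Schwarz estimate $|\langle\tilde\eta_1,\eta_2\rangle|\le \|\tilde\eta_1\|\|\eta_2\|\le 2\sqrt{c_1 c_2}$ then delivers exactly the required bound. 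There is no serious obstacle here; the only point to get right is to work simultaneously with $\eta_1$ coming from $a_1$ and $\tilde\eta_1$ coming from $a_1^*$, which is what allows the cross-term to take the symmetric form $2\sqrt{c_1 c_2}$ rather than the weaker bound $\sqrt{2c_2}$ that a naive single Cauchy-Schwarz applied to $\mu(a_1(\one-a_2))$ would produce.
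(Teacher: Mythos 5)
Your proof is correct and follows essentially the same route as the paper: both pass to the GNS construction, use the telescoping identity $\xi-a_1a_2\xi=(\xi-a_1\xi)+a_1(\xi-a_2\xi)$, move $a_1$ across the inner product to get the cross term $\Ree\left\langle a_1^{*}\xi-\xi,\xi-a_2\xi\right\rangle$, bound $\|a_1^*\xi-\xi\|^2\le 2c_1$ and $\|\xi-a_2\xi\|^2\le 2c_2$ using contractivity, and finish with Cauchy--Schwarz. The only cosmetic difference is your explicit unitisation and the notation $\eta_i,\tilde\eta_i$.
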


\begin{proof}
Let $(\H,\xi)$ be the GNS construction for $(B,\mu)$ (suppressing
the representation). Then 
\begin{equation}
\begin{split}1-\Ree\mu(a_{1}a_{2}) & =\Ree\left\langle \xi,\xi-a_{1}a_{2}\xi\right\rangle =\Ree\left\langle \xi,\xi-a_{1}\xi\right\rangle +\Ree\left\langle \xi,a_{1}(\xi-a_{2}\xi)\right\rangle \\
 & =c_{1}+\Ree\left\langle a_{1}^{*}\xi,\xi-a_{2}\xi\right\rangle =c_{1}+c_{2}+\Ree\left\langle a_{1}^{*}\xi-\xi,\xi-a_{2}\xi\right\rangle .
\end{split}
\label{eq:mu_a_1_a_2}
\end{equation}
Since $a_{1},a_{2}$ are contractions, we have
\[\|a_1^* \xi- \xi\|^2 = \|a_1^* \xi\|^2 + \|\xi\|^2 - 2 \Ree \langle \xi, a_1^* \xi \rangle \leq 2 - 2 \Ree  \langle \xi, a_1^* \xi \rangle = 2 c_1\]
and, similarly, $\|\xi- a_2\xi\|^2  \leq 2 c_2$.
Thus, \prettyref{eq:mu_a_1_a_2} and
the Cauchy\textendash Schwarz inequality imply the desired inequality.
\end{proof}

The next result is the main theorem of this section.

\begin{thm}
\label{thm:gen_func_domain}Let $\G$ be a locally compact quantum
group, $\left(\mu_{t}\right)_{t\ge0}$ be a $w^{*}$-continuous convolution
semigroup of $\Rant^{\mathrm{u}}$-invariant states of $\CzU{\G}$
and $\gamma$ be its generating functional. Then $\linspan(\mathscr{D}_{+}\cap D(\gamma))$
is a globally $\Rant^{\mathrm{u}}$-invariant dense $*$-subalgebra
of $\CzU{\G}$, and so is $\linspan(\overline{\mathscr{D}_{+}}\cap D(\gamma))$.
\end{thm}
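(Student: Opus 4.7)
The theorem asserts four properties for each of the two sets $\linspan(\mathscr{D}_{+}\cap D(\gamma))$ and $\linspan(\overline{\mathscr{D}_{+}}\cap D(\gamma))$: density, $*$-invariance, global $\Rant^{\mathrm{u}}$-invariance, and closure under multiplication. My plan is to dispatch the first three quickly using the structural properties of $\mathscr{D}_{+}$ recorded in \prettyref{lem:D_plus} together with elementary observations about $\gamma$, $\mu_{t}$, and $\epsilon$, and then to concentrate the main effort on multiplicativity, where I will combine \prettyref{lem:mu_a_1_a_2} with the boundedness criterion of \prettyref{prop:gamma_domain}~\prettyref{enu:gamma_domain__1}.

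Density of $\linspan(\mathscr{D}_{+}\cap D(\gamma))$ is immediate from \prettyref{prop:gamma_domain}~\prettyref{enu:gamma_domain__2}. For $*$-invariance, the key observation is that since each $\mu_{t}$ is hermitian (being a state) and $\epsilon$ is a $*$-character, $(\mu_{t}-\epsilon)(a^{*})/t$ is the complex conjugate of $(\mu_{t}-\epsilon)(a)/t$ for every $a\in \CzU{\G}$ and $t>0$; passing to the limit yields $a^{*}\in D(\gamma)$ with $\gamma(a^*)=\overline{\gamma(a)}$ whenever $a\in D(\gamma)$. Combined with the $*$-invariance of $\mathscr{D}_{+}$ from \prettyref{lem:D_plus}~\prettyref{enu:D_plus__4}, this gives the first property. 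The $\Rant^{\mathrm{u}}$-invariance of $D(\gamma)$ will similarly follow from the $\Rant^{\mathrm{u}}$-invariance of each $\mu_{t}$ (by assumption) and of $\epsilon$ (a standard property of the co-unit), giving $\gamma\circ\Rant^{\mathrm{u}}=\gamma$ on $D(\gamma)$; combined again with \prettyref{lem:D_plus}~\prettyref{enu:D_plus__4}, this yields the desired invariance.

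The heart of the argument, and the anticipated main obstacle, is closure under multiplication. Given $a_{1},a_{2}\in\mathscr{D}_{+}\cap D(\gamma)$, \prettyref{lem:D_plus}~\prettyref{enu:D_plus__4} gives $a_{1}a_{2}\in\mathscr{D}_{+}$, so the task is to show $a_{1}a_{2}\in D(\gamma)$. The case when some $a_{i}$ is zero is trivial, so I may rescale, using that $\mathscr{D}_{+}$ is a cone and $D(\gamma)$ a subspace, to assume $\|a_{i}\|=1$; then \prettyref{lem:D_plus}~\prettyref{enu:D_plus__2} yields $\epsilon(a_{i})=1$. Moreover, by \prettyref{prop:SV_prop_3_8}~\prettyref{enu:SV_prop_3_8__1} and \prettyref{thm:SV_thm_3_4}, $\mu_{t}(a_{i})=\widehat{\om}_{i}(e^{-tA})$ is real (as $\widehat{\om}_{i}\ge 0$ and $A$ is positive selfadjoint). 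Applying \prettyref{lem:mu_a_1_a_2} to the state $\mu_{t}$ and the contractions $a_{1},a_{2}$, and then using $2\sqrt{xy}\le x+y$ for $x,y\ge 0$, I expect to obtain
\[
(\epsilon-\mu_{t})(a_{1}a_{2})\le 2(\epsilon-\mu_{t})(a_{1})+2(\epsilon-\mu_{t})(a_{2})\qquad(t>0).
\]
Dividing by $t$ and letting $t\to 0^{+}$, the right-hand side converges to the finite number $-2\gamma(a_{1})-2\gamma(a_{2})$ since $a_{1},a_{2}\in D(\gamma)$. The resulting boundedness of $\{t^{-1}(\epsilon-\mu_{t})(a_{1}a_{2}):t>0\}$, combined with \prettyref{prop:gamma_domain}~\prettyref{enu:gamma_domain__1} (applicable because $a_{1}a_{2}\in\mathscr{D}_{+}\subseteq\overline{\mathscr{D}_{+}}$), forces $a_{1}a_{2}\in D(\gamma)$. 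Bilinearity then shows $\linspan(\mathscr{D}_{+}\cap D(\gamma))$ is a subalgebra.

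For the set $\linspan(\overline{\mathscr{D}_{+}}\cap D(\gamma))$, the same strategy should apply essentially verbatim: $\overline{\mathscr{D}_{+}}$ inherits from $\mathscr{D}_{+}$ the properties of being a selfadjoint, $\Rant^{\mathrm{u}}$-invariant cone closed under multiplication, by norm-continuity of the involution, of $\Rant^{\mathrm{u}}$, and of the product on bounded subsets of $\CzU{\G}$; and the identities $\epsilon(a)=\|a\|$ and $\mu_{t}(a)\in\R$ extend from $\mathscr{D}_{+}$ to $\overline{\mathscr{D}_{+}}$ by norm-continuity. With these extensions in hand, the multiplicativity argument transfers line by line.
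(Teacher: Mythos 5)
Your proposal is correct and follows essentially the same route as the paper's proof: normalise to $\|a_{i}\|=1$ so that $\epsilon(a_{i})=1$ by \prettyref{lem:D_plus}~\prettyref{enu:D_plus__2}, note $\mu_{t}\ge 0$ on $\mathscr{D}_{+}$ (and on $\overline{\mathscr{D}_{+}}$) so the quantities are real, and combine \prettyref{lem:mu_a_1_a_2} with the boundedness criterion of \prettyref{prop:gamma_domain}~\prettyref{enu:gamma_domain__1}, with density from \prettyref{prop:gamma_domain}~\prettyref{enu:gamma_domain__2}. Your explicit verification of the $*$- and $\Rant^{\mathrm{u}}$-invariance is only spelled out more fully than in the paper, which leaves it implicit via \prettyref{lem:D_plus}~\prettyref{enu:D_plus__4}.
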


\begin{proof}
By \prettyref{lem:D_plus}~\prettyref{enu:D_plus__4} and \prettyref{prop:gamma_domain}~\prettyref{enu:gamma_domain__2}
it suffices to show that $\{a_{1}a_{2}: a_{1},a_{2}\in\overline{\mathscr{D}_{+}}\cap D(\gamma)\}\subseteq D(\gamma)$. 

We shall require two properties of the elements of $\overline{\mathscr{D}_{+}}$.
First, by \prettyref{lem:D_plus}~\prettyref{enu:D_plus__2}, $\left\Vert a\right\Vert =\epsilon(a)$ for all $a\in\mathscr{D}_{+}$
and thus for all $a\in\overline{\mathscr{D}_{+}}$.
Second, by \prettyref{prop:SV_prop_3_8}~\prettyref{enu:SV_prop_3_8__1} and positivity of $\widetilde{R}_{\mu_{t}}^{(2,\varphi)}$
we have $\mu_{t}(\mathscr{D}_{+})\subseteq[0,\infty)$, and thus $\mu_{t}(\overline{\mathscr{D}_{+}})\subseteq[0,\infty)$,
for every $t\ge0$.

Let $a_{1},a_{2}\in\overline{\mathscr{D}_{+}}\cap D(\gamma)$ be of
norm $1$. The assumption that $a_{i}\in D(\gamma)$ gives $0\le C<\infty$
such that $1-\mu_{t}(a_{i})=(\epsilon-\mu_{t})(a_{i})\le Ct$ for
all $t\ge0$ ($i=1,2$). From \prettyref{lem:mu_a_1_a_2} we infer
that $\frac{1}{t}(\epsilon-\mu_{t})(a_{1}a_{2})=\frac{1}{t}(1-\mu_{t}(a_{1}a_{2}))\le4C$
for all $t>0$, so \prettyref{prop:gamma_domain}~\prettyref{enu:gamma_domain__1}
implies that $a_{1}a_{2}\in D(\gamma)$.
\end{proof}

\begin{rem}
A $w^{*}$-continuous convolution semigroup of $\Rant^{\mathrm{u}}$-invariant states $\left(\mu_{t}\right)_{t\ge0}$ determines a $C_0$-semigroup of completely positive contractions $\left(T^\mathrm{u}_{t}\right)_{t\ge0}$ on $\CzU{\G}$, given simply by $T_t^\mathrm{u} = (\textup{id} \otimes \mu_t)\circ \Delta_\mathrm{u}$ ($t \geq 0$). This semigroup admits a densely-defined generator $L\colon D(L)\to \CzU{\G}$ and it is not difficult to see that $D(L) \subseteq\{a\in\CzU{\G}: \forall_{\nu \in \CzU{\G}^*} \, (\nu \otimes \textup{id}) (\Delta_\mathrm{u}(a)) \in D(\gamma) \}$, where $\gamma$ is the generating functional of  $\left(\mu_{t}\right)_{t\ge0}$ \cite[Proposition 3.6]{Lindsay_Skalski__conv_semigrp_states}. 
\prettyref{prop:SV_prop_3_8}~\prettyref{enu:SV_prop_3_8__1} implies that for every $\widehat\omega \in \Lone{\widehat\G}$ and $t \ge 0$ we have $T^\mathrm{u}_{t} \aa{\widehat\omega} = \aa{\widehat\omega \cdot \widetilde{R}_{\mu_t}^{(2,\varphi)}}$, thus $\int_0^t T^\mathrm{u}_{s} \aa{\widehat\omega} \d s = \aa{\widehat\omega \cdot \int_0^t \widetilde{R}_{\mu_s}^{(2,\varphi)} \mathrm{d} s}$ by \citep[Lemma 2.17]{Skalski_Viselter__convolution_semigroups}, where the left-side integral is in norm and the right-side integral is in the strict topology. The latter element belongs to $\linspan(\mathscr{D}_{+}) \cap D(L)$. We conclude that $\linspan(\mathscr{D}_{+}) \cap D(L)$ is a dense $*$-subspace of $\CzU{\G}$. It is also ${(T_t^\mathrm{u})}_{t\ge 0}$-invariant, thus it is a core of $L$. We do not know however if it is an algebra.
\end{rem}

\begin{rem}\label{rem:bounded_cond_pos_funcl}
For a locally compact group $G$, every \emph{bounded} conditionally
positive-definite function $\theta \colon G\to\R$ has the form $\theta=\mu+m$
for a positive-definite function $\mu$ on $G$ and $m\in\C$. This classical
statement generalises to a wider setting, as can be deduced from results of  \cite[Section 6]{Lindsay_Skalski__quant_stoch_conv_cocyc_3}: if $B$ is a unital C$^{*}$-algebra
	with a character $\epsilon$, then every (not necessarily hermitian, and a priori not necessarily bounded) conditionally positive
	linear functional $\gamma\colon B \to \mathbb{C}$ has the form $\gamma=s\mu-(s-\gamma(\one))\epsilon$
	for a state $\mu$ of $B$ and $s\ge0$.
\end{rem}

We now check how the subalgebras $\linspan(\mathscr{D}_{+}\cap D(\gamma))$
and $\linspan(\overline{\mathscr{D}_{+}}\cap D(\gamma))$ obtained
in \prettyref{thm:gen_func_domain} are related to the `natural' subalgebras of $D(\gamma)$
in several examples.
\begin{example}[{continuing \citep[Subsection 5.1]{Skalski_Viselter__convolution_semigroups}}]
\label{exa:our_algebra_dual_to_classical_case}Suppose that $\G:=\widehat{G}$
for a locally compact group $G$. Write $\lambda_{\mathrm{u}}$ for
the canonical embedding of $\Lone G$ into $\CzU{\G}=\CStarF G$.
We have $\mathscr{D}_{+}=\lambda_{\mathrm{u}}({\Lone{G}}_{+})$. Then
$w^{*}$-continuous convolution semigroups of $\Rant^{\mathrm{u}}$-invariant
states of $\CStarF G$ correspond to $w^{*}$-continuous multiplicative
semigroups of real-valued normalised positive-definite functions on
$G$, as well as to hermitian conditionally negative-definite functions $\theta\colon G\to\R$ vanishing at $e$
(a posteriori having non-negative values), where the second
correspondence is given by sending $\theta$ to ${(e^{-t\theta})}_{t\ge0}$.
We have $\lambda_{\mathrm{u}}(\Cc G)\subseteq D(\gamma)$ and $\gamma(\lambda_{\mathrm{u}}(f))=-\int_{G}f(t)\theta(t)\d \mu(t)$
for all $f\in\Cc G$. Since $\Cc G=\linspan(\Cc G_{+})$, we clearly
have $\lambda_{\mathrm{u}}(\Cc G)\subseteq\linspan(\mathscr{D}_{+}\cap D(\gamma))$. In fact an easy argument, for example using \prettyref{prop:gamma_domain}~\prettyref{enu:gamma_domain__1}, shows that 
$\mathscr{D}_{+}\cap D(\gamma) = \lambda_{\mathrm{u}}(\{f\in \Lone{G}_{+}: f\theta \in L^1(G)\})$,
thus
$\linspan(\mathscr{D}_{+}\cap D(\gamma)) = \lambda_{\mathrm{u}}(\{f\in \Lone{G}: f\theta \in L^1(G)\})$.
\end{example}

\begin{example}
\label{exa:our_algebra_classical_case}Suppose that $\G:=G$ for a
locally compact group $G$. Then $\CzU{\G}=\Cz G$. We use the standard
notation of \citep{Eymard__Fourier_alg}; so that $A(G)$ stands
for the Fourier algebra of $G$, $P(G)$ denotes the set of all (continuous) positive-definite functions on $G$, $P_\lambda(G)$ the set of these elements of $P(G)$ whose associated representations of $G$ are weakly contained in the left regular representation, and $B(G)$ denotes the Fourier--Stieltjes algebra of $G$, i.e.\ the linear span of $P(G)$. We have $\mathscr{D}_{+}=A(G)\cap P(G)$.
We claim that $\overline{\mathscr{D}_{+}}=\Cz G\cap P_{\lambda}(G)$.
Indeed, the inclusion `$\subseteq$' is not difficult. For `$\supseteq$',
recall that every $f\in P_{\lambda}(G)$ is the limit, in the topology
of uniform convergence on compact subsets of $G$, of a net $\left(f_{i}\right)_{i\in\mathcal{I}}$
in $A(G)\cap P(G)$ by \citep[Proposition 18.3.5]{Dixmier__C_star_English},
which is necessarily eventually bounded. This topology is equivalent
to the strict topology of $\Cb G=\M{\Cz G}$ on bounded subsets of
this space. If $f\in\Cz G\cap P_{\lambda}(G)$, then $\left(f_{i}\right)_{i\in\mathcal{I}}$
consequently converges to $f$ in the $w$-topology of $\Cz G$, as by
Cohen's factorisation theorem, every continuous functional on a C$^*$-algebra $A$ is of the form $a\mapsto \omega(ba)$, where $b \in A$ and $\omega \in A^*$. This proves that $f\in\overline{\mathscr{D}_{+}}$ and the claim follows.

Consider the generating functional $\gamma\colon D(\gamma)\subseteq\Cz G\to\C$
of a $w^{*}$-continuous convolution semigroup of $\Rant^{\mathrm{u}}$-invariant
states of $\Cz G$, that is, a $w^{*}$-continuous convolution semigroup
of symmetric regular Borel measures of $G$. By Hunt's theorem, when
$G$ is a Lie group we have $\CzX{2,l}G\subseteq D(\gamma)$ (see
\citep[Theorem 4.2.8]{Heyer__book_prob_meas} or \citep[Theorem 1.1]{Liao__book},
or the extended version \citep[Theorem 4.5.9]{Heyer__book_prob_meas}
for arbitrary locally compact groups). However, it is not always true
that $\CzX{2,l}G\subseteq\linspan(\overline{\mathscr{D}_{+}}\cap D(\gamma))$,
or even that $\CzX{2,l}G\subseteq B(G)$. For instance, we have $\CzX 2{\R}\nsubseteq B(\R)$,
and actually $\CzX{\infty}{\R}\nsubseteq B(\R)$, as shown by the following
example communicated to us by Przemys{\ldash}aw Ohrysko. Let $f\in\CzX{\infty}{\R}$
be such that $f\equiv0$ on $(-\infty,0]$ and $f(x)=\frac{1}{\ln x}$
for all $x\in[2,\infty)$. Assume by contradiction that $f\in B(\R)$.
By the theorem of F.~and M.~Riesz \citep[Theorem 8.2.7]{Rudin__book_Fourier_anal}
we then have $f\in A(\R)$. Consider the function $f_{\text{odd}}\in A(\R)$
given by $\R\ni x\mapsto f(x)-f(-x)$. Since it is odd, the set $\bigl\{\int_{1}^{b}\frac{f_{\text{odd}}(x)}{x}\d x: b>1\bigr\}$
is bounded by \citep[I.4.1]{Stein_Weiss__book}, contradicting the
fact that $f_{\text{odd}}(x)=\frac{1}{\ln x}$ for $x\in[2,\infty)$.

Nonetheless, a smaller, yet still canonical, subalgebra of $\CzX{2,l}G\subseteq D(\gamma)$
is $\CcX{4,l}G$. Let us show that $\CcX 4{\R}$ is contained in $\linspan(\CzX 2{\R}\cap P(\R))$,
thus in $\linspan(\overline{\mathscr{D}_{+}}\cap D(\gamma))$. Denote
by $x$ the identity function on $\R$. Recall from \citep[p.~143, Exercise 7]{Katznelson_harmonic_analysis}
that $\CcTwo{\R}\subseteq A(\R)$; this is because for $f\in\CcTwo{\R}$
the inversion formula $f=\hat{g}$ for $g=\frac{1}{2\pi}\hat{f}(-\cdot)$
holds, as $x^{2}\hat{f}$ belongs to $A(\R)$ by \citep[Chapter VI, Theorem 1.5]{Katznelson_harmonic_analysis}
and is thus bounded, so that $\hat{f}\in\Lone{\R}$. Let $f\in\CcX 4{\R}$.
Then as before, $x^{4}\hat{f}$ is bounded, so that $x^{2}\hat{f}\in\Lone{\R}$.
Write $\hat{f}$ as the linear combination of $F_{1},\ldots,F_{4}\in L^{1}(\R)_+$
in the standard way. For $1\le i\le4$ we get $x^{2}F_{i}\in\Lone{\R}$,
and consequently \citep[Chapter VI, Theorem 1.6]{Katznelson_harmonic_analysis}
implies that $\widehat{F_{i}}\in\CzX 2{\R}$. By the foregoing, $\hat{\hat{f}}$,
and thus also $f$, belong to $\linspan(\CzX 2{\R}\cap P(\R))$.
\end{example}

\begin{example}\label{exa:our_algebra_compact_case}
Suppose that $\G$ is a compact quantum group. Then $\Pol{\G}\subseteq D(\gamma)$.
Since $\tau_{i/4}^{\mathrm{u}}$ restricts to an isomorphism of $\Pol{\G}$
we have $\Pol{\G}=\linspan(\mathscr{D}_{+}\cap\Pol{\G})$, hence $\Pol{\G}\subseteq\linspan(\mathscr{D}_{+}\cap D(\gamma))$.
\end{example}

\begin{example}
	We will discuss here convolution semigroups arising from closed quantum subgroups and the special instances of the Brownian motions on $SU_q(2)$ and $E_\mu(2)$.
	For the notions of (closed) quantum subgroups we refer to  \cite{Daws_Kasprzak_Skalski_Soltan__closed_q_subgroups_LCQGs}.
Given a closed quantum subgroup $\QH$ (in the sense of Vaes) of a locally compact quantum group $\G$ and a $w^{*}$-continuous 
convolution semigroup of states ${(\mu_t^\QH)}_{t\geq 0}$ of $\CzU{\QH}$ we define the associated $w^{*}$-continuous convolution semigroup of states ${(\mu_t^\G)}_{t\geq 0}$ of $\CzU{\G}$ simply by putting $\mu_t^\G := \mu_t^\QH \circ \Theta$, where $\Theta\colon\CzU{\G} \to \CzU{\QH}$ is the quantum subgroup-defining surjection. Denote the respective generating functionals by $\gamma^\QH$ and $\gamma^\G$. Then $D(\gamma^\G)=\{a\in \CzU{\G}: \Theta(a) \in D(\gamma^\QH)\}$, with $\gamma^\G(a)=\gamma^\QH (\Theta(a))$ for $a\in D(\gamma^\G)$. By the definition of the Vaes closed quantum subgroup, see for example \cite[Theorem 3.7]{Daws_Kasprzak_Skalski_Soltan__closed_q_subgroups_LCQGs}, we have $\Theta(\{(\widehat{\om}\tensor\i)(\widehat{\wW^\G}):\widehat{\om}\in\Lone{\widehat{\G}}\} = \{(\widehat{\om}\tensor\i)(\widehat{\wW^\QH}):\widehat{\om}\in\Lone{\widehat{\QH}}\}$.  Furthermore, the morphism $\Theta$ intertwines the scaling groups: $\Theta \circ \tau_t^{\mathrm{u},\G} = \tau_t^{\mathrm{u},\QH} \circ \Theta$ for all $t\in\R$; indeed, using the terminology of \cite{Daws_Kasprzak_Skalski_Soltan__closed_q_subgroups_LCQGs}, this intertwining holds true for every strong quantum homomorphism between two locally compact quantum groups, as follows by combining \cite[Proposition 3.10]{Meyer_Roy_Woronowicz__hom_quant_grps} with  \cite[formula (1.12)]{Daws_Kasprzak_Skalski_Soltan__closed_q_subgroups_LCQGs} relating a strong quantum homomorphism to its associated bicharacter. Thus, we have
$\mathscr{D}_{+}^\QH = \Theta (\mathscr{D}_{+}^\G)$. Hence
$\mathscr{D}_{+}^\G\cap D(\gamma^\G) = \{a\in \mathscr{D}_{+}^\G: \Theta(a) \in 
 D(\gamma^\QH)\}$,
so that $\linspan(\mathscr{D}_{+}^\G\cap D(\gamma^\G)) = \linspan( \{a\in \mathscr{D}_{+}^\G: \Theta(a) \in 
D(\gamma^\QH)\})$. Similarly, $\Theta$ intertwines the unitary antipodes, so that if the elements of ${(\mu_t^\QH)}_{t\geq 0}$ are $\Rant^{\mathrm{u},\QH}$-invariant, then the elements of ${(\mu_t^\G)}_{t\geq 0}$ are $\Rant^{\mathrm{u},\G}$-invariant.

Consider then the special instance of this construction arising from the essentially unique quantum Gaussian process on $SU_q(2)$, with $q \in [-1,1]\setminus\{0\}$, as described for example in \cite{ScSk}. Our quantum group $\G$ is in this case Woronowicz's $SU_q(2)$, and its closed quantum subgroup $\QH$ will be the circle, $\mathbb{T}$. We refer for the details of the construction to \cite[Subsection 2.3 and Section 5]{FST}. The convolution semigroup on $\mathbb{T}$ we will be interested in  is the classical heat semigroup, with the generating functional given formally by the second derivative at $0$. For us it will be easier to view $\CC{\mathbb{T}}$ as $C^*(\Z)$, so that we can use the techniques introduced in Example \ref{exa:our_algebra_dual_to_classical_case}. In this picture the heat convolution semigroup ${(\mu_t^{\mathbb{T}})}_{t\geq 0}$ corresponds to the conditionally negative-definite function $\theta(n) = n^2, \;\; n\ \in \Z$. Denote the identity function in $\CC{\mathbb{T}}$ by $z$. Then by \prettyref{exa:our_algebra_dual_to_classical_case} we have 
\[
\linspan(\mathscr{D}_{+}^{\mathbb{T}}\cap D(\gamma^{\mathbb{T}})) = \Big\{\sum_{n\in \Z} a_n z^n: ({a_n})_{n\in\Z} \in \C^\Z, \sum_{n\in \Z} n^2 |a_n|<\infty\Big\}.\]
Now the (equivalence classes of) irreducible representations of $SU_q(2)$ are indexed by half-integers, and each representation $U^s$ (with $s \in \frac{1}{2}\Z_+$) is $(2s+1)$-dimensional. Also, in the notation of \cite[Subsection 2.3]{FST}, for all $t\in\R$ we have $\tau_t(\alpha)=\alpha$ and $\tau_t(\gamma)=|q|^{2it}\gamma$ by \cite[formulas (5.19) and (A 1.3)]{Woronowicz__CMP} (see also \cite[Example 1.7.8]{NeshveyevTuset_book}), so that $\tau_t(U_{ij}^s) =|q|^{(i-j)2it}U_{ij}^s$ and $\tau_{-i/4}(U^s_{ji}) = |q|^{(j-i)/2} U^s_{ji}$ (see \cite[p.~227]{FST}) for $s \in \frac{1}{2}\Z_+$ and $i,j \in \{-s, -s+1 ,\ldots, s-1, s\}$.
Consequently, for $\widetilde{\omega} \in \Lone{\widehat{SU_q(2)}} \cong \ell^1\text{--}\bigoplus_{s \in \frac{1}{2}\Z_+} \ell^1(\matrices_{2s+1})$ we have 
$$\aa{\widetilde\omega} = \tau^\mathrm{u}_{i/4} \Big(\sum_{ s \in \frac{1}{2}\Z_+} \sum_{i,j=-s,\ldots,s} \omega^s_{ij} (U^s_{ji})^*\Big) = \sum_{ s \in \frac{1}{2}\Z_+} \sum_{i,j=-s,\ldots,s} |q|^{\frac{j-i}{2}}\omega_{ij}^s (U_{ji}^s)^*,$$
where the symbol $\ell^1\text{--}\bigoplus$ is meant to indicate the $\ell^1$-direct sum and we view
$\widetilde{\omega}=(\omega^s)_{s \in \frac{1}{2}\Z_+}$ as the direct sum of trace-class matrices.

We need the fact that if $\Theta\colon \CC{SU_q(2)}\to \CC{\mathbb{T}}$ denotes the relevant quotient map, then
\[ \Theta(U_{ij}^s) = \delta_{ij} z^{-2i}\;\;\;\; (\forall_{s \in \frac{1}{2}\Z_+}\forall_{i,j \in \{-s,\ldots,s\}})\]
(see the formulas after \cite[Theorem 5.1]{FST}). Thus, we have
\[
\mu_t^{SU_q(2)} (U_{ij}^s) = \delta_{ij} e^{-4ti^2}\;\;\;\; (\forall_{t \in \R}\forall_{s \in \frac{1}{2}\Z_+}\forall_{i,j \in \{-s,\ldots,s\}}),
\]
and in view of the results discussed in the first part of this example  we obtain the following formula:
\begin{align*} \mathscr{D}_{+}^{SU_q(2)}\cap D(\gamma^{SU_q(2)}) = \Big\{&\sum_{ s \in \frac{1}{2}\Z_+} \sum_{i,j=-s,\ldots,s} q^{\frac{i-j}{2}}\omega_{i,j}^s (U_{j,i}^s)^*: \\ &  \widetilde{\omega} \in \ell^1\text{--}\bigoplus_{s \in \frac{1}{2}\Z_+} \ell^1(\matrices_{2s+1})_+, \sum_{n \in \Z} n^2\Big(\sum_{ s \in \frac{1}{2}\Z_+} \omega_{\frac{n}{2},\frac{n}{2}}^s\Big) < \infty\Big\},\end{align*} 
where the expression $\omega_{\frac{n}{2},\frac{n}{2}}^s$ in the last formula should be understood as equal $0$ whenever $\frac{n}{2}\notin \{-s, \ldots,s\}$.

Next, for a fixed a parameter $\mu \in (0,1)$, let $\G$ be the quantum $E_\mu(2)$ group \cite{Woronowicz_E(2)}. It contains $\QH := \mathbb{T}$ as a (maximal classical) closed quantum subgroup \cite[Theorem 4.3]{KalantarNeufang_groups} (see also \cite[Propositions 2.8.32 and 2.8.36]{Jacobs_phd_thesis}). Write $\overline{\C}^\mu := \{\mu^k z : k \in \Z, z \in \mathbb{T} \} \cup \{0\}$, and define an action $\a$ of $\Z$ on $\Cz{\overline{\C}^\mu}$ by $\a_1(f) := f(\mu\cdot)$ for $f \in \Cz{\overline{\C}^\mu}$. Then we can and will identify $\Cz{E_\mu(2)}$ with $\Cz{\overline{\C}^\mu} \rtimes_\a \Z$ \cite[Proposition 4.1.5]{Jacobs_phd_thesis}. Denoting by $(c^n)_{n\in\Z}$ the canonical unitaries in $\M{\Cz{\overline{\C}^\mu} \rtimes_\a \Z}$, the relevant map $\Theta \colon \Cz{E_\mu(2)} \to \CC{\mathbb{T}}$ is given by 
\[
\Theta(f c^n) = f(0) z^n \;\;\;\;  (\forall_{f \in \Cz{\overline{\C}^\mu}} \forall_{n \in \Z}).
\]
Therefore, the $w^{*}$-continuous convolution semigroup of states on $E_\mu(2)$ associated with the heat semigroup on $\mathbb{T}$ is given by
\[
\mu_t^{E_\mu(2)} (f c^n) = f(0) e^{-tn^2}\;\;\;\; (\forall_{t \in \R} \forall_{f \in \Cz{\overline{\C}^\mu}} \forall_{n\in\Z}),
\]
so its generating functional satisfies $f c^n \in D(\gamma^{E_\mu(2)})$ and $\gamma^{E_\mu(2)}(f c^n) = -n^2 f(0)$ for all $f \in \Cz{\overline{\C}^\mu}$ and $n \in \Z$.
Exhibiting an explicit description of $\mathscr{D}_{+}^{E_\mu(2)}$ and $\mathscr{D}_{+}^{E_\mu(2)} \cap D(\gamma^{E_\mu(2)})$ in terms of the canonical generators of $\Cz{E_\mu(2)}$ is more involved than for $SU_q(2)$ due to the complicated nature of the regular representations of $E_\mu(2)$ \cite[Definition 2.3.9 and Corollary 2.3.14]{Jacobs_phd_thesis}, and is outside the scope of the present paper.
\end{example}

\section{Cocycles} \label{sec:cocycles}

As mentioned in the Introduction, \prettyref{thm:gen_func_domain} opens the way to associate cocycles (i.e.\ $\pi$\textendash $\epsilon$ derivations) to convolution semigroups of states, and to introduce notions such as Gaussianity and L\'evy--Khintchine decompositions and develop their theory (see \cite{Schurmann__white_noise_bialg,Franz_Gerhold_Thom__LK_dec_gen_funct} for these concepts in the algebraic setting). In this section we recall the construction of cocycles, intending to continue the development of the theory in later works. 

 The following algebraic result is well known and follows via a GNS-type construction.
\begin{prop}
	\label{prop:gen_func_to_cocycle}Let $\mathscr{A}$ be a unital $*$-algebra.
	Suppose that $\epsilon$ is a character of $\mathscr{A}$, and that
	$\gamma\colon\mathscr{A}\to\C$ is a linear functional satisfying $\gamma(\one)=0$
	that is hermitian and algebraically conditionally positive in the
	sense that $\gamma(\left\{ a^{*}a: a\in\mathscr{A}\cap\ker\epsilon\right\} )\subseteq[0,\infty)$.
	Then there exists a triple $(\H,\pi,c)$, where $\H$ is an inner
	product space, $\pi$ is a unital representation of $\mathscr{A}$
	on $\H$ and $c\colon\mathscr{A}\to\H$ is a $\pi$\textendash $\epsilon$
	derivation that induces the $\prescript{}{\epsilon}{\C}_{\epsilon}^ {}$-coboundary
	of $\gamma$: it is a linear map satisfying
	\[
	\begin{split}
	c(ab) & =\pi(a)c(b)+c(a)\epsilon(b), \\
	\gamma(b^{*}a) & =\left\langle c(b),c(a)\right\rangle +\gamma(a)\overline{\epsilon(b)}+\epsilon(a)\overline{\gamma(b)}
	\end{split}
	\qquad(\forall_{a,b\in\mathscr{A}}).
	\]
	If, in addition, $\mathscr{A}$ is a unital $*$-subalgebra of some
	unital C$^{*}$-algebra $B$, and if $\gamma$ is conditionally positive
	(in the possibly stricter sense of \prettyref{def:cond_pos}, namely
	$\gamma(\mathscr{A}\cap\ker\epsilon\cap B_{+})\subseteq[0,\infty)$),
	then we can choose $\H$ to be a Hilbert space and $\pi$ to be contractive.
\end{prop}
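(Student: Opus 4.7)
The plan is to carry out an explicit GNS-style construction in the spirit of Sch\"urmann's reconstruction theorem. First I would write $\mathscr{A}_0 := \mathscr{A}\cap\ker\epsilon$, which is a two-sided $*$-ideal of codimension one in $\mathscr{A}$ (since $\epsilon$ is a unital $*$-character, $\ker\epsilon$ is $*$-closed and multiplicative), and define a sesquilinear form on $\mathscr{A}_0$ by $\langle x,y\rangle_\gamma := \gamma(x^*y)$. Since $x^*x\in\mathscr{A}_0$ whenever $x\in\mathscr{A}_0$, the algebraic conditional positivity hypothesis guarantees that this form is positive semi-definite. Setting $N := \{x\in\mathscr{A}_0 : \gamma(x^*x)=0\}$, which is a linear subspace by the Cauchy\textendash Schwarz inequality, and letting $\H := \mathscr{A}_0/N$ with the induced inner product gives the candidate inner product space.

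Next I would define $c(a) := [a - \epsilon(a)\one]$ for $a\in\mathscr{A}$ and $\pi(a)[x] := [ax]$ for $x\in\mathscr{A}_0$. That $ax\in\mathscr{A}_0$ and that $\pi$ is a unital multiplicative homomorphism are immediate from $\epsilon$ being a character. Well-definedness of $\pi(a)$ on the quotient is the genuinely non-trivial algebraic point: for $x\in N$ one needs $\gamma(x^*a^*ax)=0$, which will follow from a single application of Cauchy\textendash Schwarz upon writing $\gamma(x^*(a^*ax))$ and using that both $x$ and $a^*ax$ lie in $\mathscr{A}_0$, yielding $|\gamma(x^*(a^*ax))|^2\le\gamma(x^*x)\,\gamma((a^*ax)^*(a^*ax))=0$. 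The derivation identity $c(ab)=\pi(a)c(b)+c(a)\epsilon(b)$ and the coboundary identity then both reduce to direct algebraic expansions using $\gamma(\one)=0$ and hermiticity $\gamma(b^*)=\overline{\gamma(b)}$.

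For the C$^*$-algebraic strengthening, the goal is to upgrade $\H$ to a Hilbert space by completion and to show that $\pi$ is contractive. I would establish the norm bound $\gamma(x^*a^*ax)\le\|a\|^2\,\gamma(x^*x)$ for $a\in\mathscr{A}$ and $x\in\mathscr{A}_0$ by observing that $\|a\|^2\one-a^*a\ge 0$ in $B$, so that $x^*(\|a\|^2\one-a^*a)x$ is positive in $B$ and lies in $\mathscr{A}\cap\ker\epsilon$ (using $\mathscr{A}$ being a $*$-subalgebra and $\epsilon(x)=0$); the C$^*$-sense conditional positivity of $\gamma$ then delivers the inequality. Consequently each $\pi(a)$ extends to a bounded operator of norm at most $\|a\|$ on the Hilbert space completion of $\H$, into which $c$ embeds unchanged, and the extended $\pi$ remains a unital $*$-homomorphism by continuity. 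The only step requiring any real care is the well-definedness of $\pi(a)$ on the quotient; everything else is routine bookkeeping.
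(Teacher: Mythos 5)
Your construction is correct and is precisely the GNS-type argument the paper has in mind: it states the result as well known "via a GNS-type construction" and gives no further proof, and your quotient of $\ker\epsilon$ by the null space of the form $\gamma(x^*y)$, with $c(a)=[a-\epsilon(a)\one]$ and $\pi$ left multiplication, together with the positivity of $x^*(\|a\|^2\one-a^*a)x$ for the contractivity in the C$^*$-case, is exactly that route. (Only cosmetic remarks: the $*$-closedness of $\ker\epsilon$ you mention is never actually needed, and the one-line check that $\pi$ is a $*$-representation could be made explicit.)
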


If we apply \prettyref{prop:gen_func_to_cocycle} to an $\Rant^{\mathrm{u}}$-invariant conditionally positive functional, the resulting cocycle additionally has a symmetry property, which in the context of compact quantum groups was exploited  in \cite{Kyed__cohom_prop_T_QG} and \cite{Das_Franz_Kula_Skalski__one_to_one_corres}. Specifically, in the context of \prettyref{thm:gen_func_domain}, with $B$ being $\CzU{\G}^{\#}$
	and $\mathscr{A}$ being $\linspan((\mathscr{D}_{+}\cap D(\gamma))\cup\{\one\})$,
the  cocycle $c$ resulting from \prettyref{prop:gen_func_to_cocycle} is \emph{real} in the sense that 
	\[
	\left\langle c(\Rant^{\mathrm{u}}(b)^{*}),c(\Rant^{\mathrm{u}}(a)^{*})\right\rangle =\left\langle c(a),c(b)\right\rangle \qquad(\forall_{a,b\in\mathscr{A}}).
	\]
	
Finally we show how the $\pi$\textendash $\epsilon$ derivations considered above in the dual to classical case give rise to the usual cocycles viewed as Hilbert space-valued functions on a group satisfying the suitable cocycle relation.

\begin{example}[compare \prettyref{exa:our_algebra_dual_to_classical_case}]
	Suppose that $G$ is a locally compact group and consider the locally
	compact quantum group $\G:=\widehat{G}$. Denote the left Haar measure
	of $G$ by $\mu$ and the co-unit of $\G$ by $\epsilon$. Set $\mathscr{A}:=\linspan\left(\lambda_{\mathrm{u}}(\Cc G)\cup\left\{ \one\right\} \right)$
	inside $\CStarF G^{\#}$. Fix a unitary representation $\Pi$ of $G$
	on a Hilbert space $\H$ and denote by $\pi$ the representation of
	$\CStarF G$ on $\H$ associated to $\Pi$.
	
	A \emph{(1-) cocycle} of $G$ with respect to $\Pi$ is a continuous
	map $b\colon G\to\H$ such that $b(ts)=\Pi(t)b(s)+b(t)$ for all $t,s\in G$.
	For such a cocycle, the linear map $c\colon \mathscr{A}\to\H$ given by
	$c(\lambda_{\mathrm{u}}(f)):=\int_{G}f(t)b(t)\d \mu(t)$ for $f\in\Cc G$ and $c(\one):=0$
	is well defined by the continuity of $b$, and is a $\pi$\textendash $\epsilon$
	derivation. Furthermore, $c$ satisfies the following continuity property:
	
	$\left(\cent\right)$ For every compact $K\subseteq G$ there exists
	a constant $0\le m_{K}<\infty$ such that $\left\Vert c(\lambda_{\mathrm{u}}(f))\right\Vert \le m_{K}\left\Vert f\right\Vert _{\Lone G}$
	for all $f\in\Cc G$ supported by $K$.
	
	Conversely, suppose that $G$ is second countable (hence $\sigma$-compact)
	and let $c\colon \mathscr{A}\to\H$ be a $\pi$\textendash $\epsilon$ derivation
	satisfying $\left(\cent\right)$. We will prove that it is induced
	by a cocycle of $G$ as above.
	
	For each compact set $K\subseteq G$, let $\Cc{G;K}:=\left\{ f\in\Cc G: f\text{ is supported by }K\right\} $
	and $\Cc{G|K}:=\left\{ f|_{K}:f\in\Cc{G;K}\right\} $. Also consider
	the (finite) restriction of the positive measure space $(G,\text{Borel},\mu)$
	to $K$, and denote by $\Lone K,\Linfty K$ the resulting $L^{1},L^{\infty}$-spaces. 
	
	Let $\mathbb{K}:=\left\{ K\subseteq G: K\text{ is compact and }\Cc{G|K}\text{ is dense in }\Lone K\text{ in the }L^{1}\text{-norm}\right\} $.
	As we show below, in \prettyref{lem:sc_LCG_good_compact_sets}, $\mathbb{K}$
	contains a sequence $\left(K_{n}\right)_{n=1}^{\infty}$ such that
	each compact subset of $G$ is contained in some $K_{n}$.
	
	For every $K\in\mathbb{K}$, $\left(\cent\right)$ implies that the
	map $\Cc{G;K}\ni f\mapsto c(\lambda_{\mathrm{u}}(f))$ induces a bounded
	linear map from $\Lone K$ to $\H$; and $\Lone K$ is separable because
	$K$ is second countable. We deduce that the image of $c$ in $\H$
	is separable, so we may and shall assume that $\H$ is separable.
	
	Take again $K\in\mathbb{K}$. Denote by $\Linfty{K,\H}$ the Banach space of equivalence classes of weakly measurable
	essentially bounded functions from $K$ to $\H$.
	Since $\H$ is separable, the Banach spaces $\Linfty{K,\H}$ and $B(\Lone K,\H)$ are canonically isometrically isomorphic \cite[Theorem VI.8.6]{Dunford_Schwartz_1}.
	Therefore, the bounded map from $\Lone K$ to $\H$ discussed above induces an
	element $b_{K}\in\Linfty{K,\H}$ such that $c(\lambda_{\mathrm{u}}(f))=\int_{K}f(t)b_{K}(t)\d \mu(t)$
	weakly in $\H$ for all $f\in\Cc{G;K}$. Using the sequence $\left(K_{n}\right)_{n=1}^{\infty}$
	in $\mathbb{K}$ we conclude that there exists a weakly measurable
	function $b\colon G\to\H$ that is bounded on each compact subset of $G$
	and satisfies $c(\lambda_{\mathrm{u}}(f))=\int_{G}f(t)b(t)\d \mu(t)$ weakly
	for all $f\in\Cc G$.
	
	The assumption that $c\colon\mathscr{A}\to\H$ is a $\pi$\textendash $\epsilon$
	derivation means that for each $f,g\in\Cc G$,
	\begin{equation}
	c(\lambda_{\mathrm{u}}(f\conv g))=\int_{G}f(t)\Pi(t)c(\lambda_{\mathrm{u}}(g))\d \mu(t)+\left(\int_{G}g(t)\d \mu(t)\right)c(\lambda_{\mathrm{u}}(f))\label{eq:pi_e_derivative__dual_classical}
	\end{equation}
	(the left integral converges in norm). For a function $h\colon G\to\C$
	use the notation $h^{\lor}:=h(\cdot^{-1})$. Let $\z\in\H$, and write
	$b_{\z}:=\left\langle \z,b(\cdot)\right\rangle $. Then
	\[
	\begin{split}\left\langle \z,c(\lambda_{\mathrm{u}}(f\conv g))\right\rangle  & =\int_{G}\left(\int_{G}f(t)g(t^{-1}s)\d \mu(t)\right)b_{\z}(s)\d \mu(s)\\
	& =\int_{G}f(t)\left(\int_{G}g(t^{-1}s)b_{\z}(s)\d \mu(s)\right)\mathrm{d} \mu(t)=\int_{G}f(t)(g\conv b_{\z}^{\lor})(t^{-1})\d \mu(t).
	\end{split}
	\]
	As a result, \prettyref{eq:pi_e_derivative__dual_classical} implies
	that the following equality holds almost everywhere: 
	\begin{equation}
	(g\conv b_{\z}^{\lor})^{\lor}=\left\langle \z,\Pi(\cdot)c(\lambda_{\mathrm{u}}(g))\right\rangle +\left(\int_{G}g(t)\d \mu(t)\right)b_{\z}.\label{eq:pi_e_derivative__dual_classical__2}
	\end{equation}
	Notice that since $b_{\z}$ is bounded on compact sets, $(g\conv b_{\z}^{\lor})^{\lor}$
	is continuous (for the convolution of an $L^{1}$ function and an
	$L^{\infty}$ function is continuous); and evidently so is $\left\langle \z,\Pi(\cdot)c(\lambda_{\mathrm{u}}(g))\right\rangle $.
	Choosing $g$ such that $\int_{G}g(t)\d \mu(t)\neq0$ we deduce that $b_{\z}$
	is equal almost everywhere to a continuous function. Since $b$ is
	bounded on compact sets, since the complement of a $\mu$-null set
	is dense, and since $\H$ is separable, this yields that $b$ is equal
	almost everywhere to a weakly continuous function,
	so that we may and shall assume that $b$ itself is weakly continuous.
	Therefore, \prettyref{eq:pi_e_derivative__dual_classical__2} holds
	everywhere for all $g\in\Cc G$ and $\z\in\H$. 
	 This implies, by a simple calculation using weak continuity of $b$, that $b(ts)=\Pi(t)b(s)+b(t)$
	for all $t,s\in G$. Finally, as $\H$ is separable, $b$ is continuous
	(in norm) by \citep[Exercise 2.14.3]{Bekka_de_la_Harpe_Valette__book}.
	In conclusion, $b$ is a cocycle.
\end{example}

\begin{lem}
	\label{lem:sc_LCG_good_compact_sets}For a second countable, locally
	compact group $G$, the set $\mathbb{K}$ defined above contains a
	sequence such that each compact subset of $G$ is contained in some
	element of this sequence.
\end{lem}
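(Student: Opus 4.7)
The plan is to exhibit, for every $n$, a compact set $K_n \in \mathbb{K}$ containing the $n$-th term $L_n$ of a fixed compact exhaustion of $G$; then any compact subset of $G$, lying in some $L_n$, will also lie in the corresponding $K_n$. Since $G$ is second countable, locally compact and Hausdorff, it is metrizable; fix a compatible metric $d$ and choose compact sets $L_1 \subseteq L_2 \subseteq \cdots$ with $L_n \subseteq \mathrm{int}(L_{n+1})$ and $\bigcup_n L_n = G$.

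The core observation, reducing matters to a measure-theoretic condition, is that any compact set $K \subseteq G$ with $\mu(\partial K) = 0$ belongs to $\mathbb{K}$. Indeed, for such $K$, every $f \in L^1(K)$ agrees $\mu$-almost everywhere with its restriction to $\mathrm{int}(K)$ and hence may be viewed as an element of $L^1(\mathrm{int}(K))$; the standard density of $\Cc{\mathrm{int}(K)}$ in $L^1(\mathrm{int}(K))$ yields $h \in \Cc{\mathrm{int}(K)}$ arbitrarily close to $f$ in $L^1$-norm, and extending $h$ by zero to all of $G$ produces an element of $\Cc{G;K}$ whose restriction to $K$ approximates $f$ in $L^1(K)$.

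It remains to find compact $K_n \supseteq L_n$ with $\mu(\partial K_n) = 0$; this is the main technical point. For $s > 0$ consider the closed thickenings $L_n^s := \{y \in G : d(y, L_n) \le s\}$. A routine local-compactness argument (cover $L_n$ by finitely many open balls whose double-radius closures are compact) shows that $L_n^s$ is compact and contained in $\mathrm{int}(L_{n+1})$ for all sufficiently small $s > 0$. Now the level sets $\{y \in G : d(y, L_n) = s\}$ are pairwise disjoint as $s$ ranges over $(0, \infty)$, so at most countably many of them can carry positive Haar measure; accordingly, pick $s_n$ in the permissible range so as to avoid these. Setting $K_n := L_n^{s_n}$ and using $\partial K_n \subseteq \{y : d(y, L_n) = s_n\}$, we obtain $\mu(\partial K_n) = 0$, and the first paragraph's observation completes the proof.
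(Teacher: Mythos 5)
Your proof is correct, and it takes a genuinely different route from the paper's. You first isolate a general criterion: any compact $K$ with $\mu(\partial K)=0$ lies in $\mathbb{K}$, because $\Lone K$ identifies isometrically with $L^{1}(\mathrm{int}(K))$, where $\Cc{\mathrm{int}(K)}$ is dense and extends by zero into $\Cc{G;K}$; you then manufacture such sets by thickening a compact exhaustion $\left(L_{n}\right)_{n}$ with respect to an arbitrary compatible metric, choosing the thickening level $s_{n}$ among the (all but countably many, by disjointness and $\sigma$-finiteness) levels whose level set $\{y: d(y,L_{n})=s_{n}\}\supseteq\partial K_{n}$ is Haar-null. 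The paper instead invokes Struble's theorem to get a proper left-invariant metric, reduces the density question via Tietze (since $\Cc{G|K}$ is an ideal in $\CC K$ and $\CC K$ is dense in $\Lone K$) to approximating the constant function $1_{K}$, and does this with Urysohn functions on closed balls $\overline{B_{r}}$ whose radii are continuity points of $r\mapsto\mu(B_{r})$ --- which is the paper's way of saying the bounding sphere is essentially null, so the two arguments share the same measure-theoretic heart. What your version buys is independence from Struble's theorem and from the ideal/Tietze reduction, plus the slightly more general and reusable observation that null topological boundary already implies membership in $\mathbb{K}$; what the paper's version buys is brevity once Struble is quoted, and an explicit, group-adapted family of sets (closed balls for an invariant metric). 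The only points worth writing out in a final version are the routine ones you flagged: that $L_{n}^{s}$ is compact for small $s$ (the distance to the compact set $L_{n}$ is attained, so the finitely-many-balls covering argument goes through), and that $\Cc U$ is dense in $L^{1}(U,\mu|_{U})$ for open $U\subseteq G$ because the restricted Haar measure is Radon.
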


\begin{proof}
	For a compact $K\subseteq G$, $\CC K$ is dense in $\Lone K$, hence
	$K\in\mathbb{K}$ (namely, $\Cc{G|K}$ is dense in $\Lone K$) if
	and only if $\Cc{G|K}$ is dense in $\CC K$ (in the $L^{1}$-norm).
	This is equivalent to $1_{K}$ lying in the closure of $\Cc{G|K}$,
	because $\Cc{G|K}$ is an ideal in $\CC K$ by Tietze's theorem.
	
	Since $G$ is second countable, there exists a (left-invariant) metric
	$d$ on $G$ that induces the topology on $G$ such that each open
	$d$-ball has compact closure \citep{Struble__metrics_LCGs}. Denote again
	the left Haar measure of $G$ by $\mu$, and for $r>0$ write $B_{r}$
	for the open $d$-ball around $e$ of radius $r$. Since the function
	$\left(0,\infty\right)\to\left(0,\infty\right)$ given by $r\mapsto\mu(B_{r})$
	is (well defined and) non-decreasing, it admits arbitrarily large
	points of continuity. Thus, it suffices to prove that if $r>0$ is
	such a point then $\overline{B_{r}}\in\mathbb{K}$. For every $0<\delta<r$
	there exists by Urysohn's lemma $f_{\delta}\in\Cc G$ with values
	in $\left[0,1\right]$ satisfying $f_{\delta}|_{\overline{B_{r-\delta}}}\equiv1$
	and $\supp f_{\delta}\subseteq B_{r}$. Then $f_{\delta}\in\Cc{G;\overline{B_{r}}}$,
	hence $f_{\delta}|_{\overline{B_{r}}}\in\Cc{G|\overline{B_{r}}}$.
	We have $\left\{ x\in\overline{B_{r}}: f_{\delta}(x)\neq1\right\} \subseteq\overline{B_{r}}\backslash\overline{B_{r-\delta}}\subseteq B_{r+\delta}\backslash B_{r-\delta}$,
	so that 
	\[
	\Vert f_{\delta}|_{\overline{B_{r}}}-1_{\overline{B_{r}}}\Vert_{\Lone{\overline{B_{r}}}}\le\mu(B_{r+\delta})-\mu(B_{r-\delta})\xrightarrow[\delta\to0^{+}]{}0
	\]
	by assumption. This completes the proof. 
\end{proof}

\section{\label{sec:reconstruction}Reconstructing convolution semigroups
from generating functionals}

An important consequence of the celebrated Sch\"urmann reconstruction
theorem \citep{Schurmann__pos_and_cond_pos_coalg,Schurmann__white_noise_bialg}
is that for every conditionally positive, hermitian functional $\gamma$
on a $*$-bialgebra that annihilates the unit there exists a (unique)
$w^{*}$-continuous convolution semigroup of states $\left(\mu_{t}\right)_{t\ge0}$
such that $\mu_{t}=\exp_{\star}(t\gamma)$ for all $t\ge0$. In
particular, this theorem applies to compact quantum groups. In this
section we establish a reconstruction theorem for arbitrary locally
compact quantum groups under a symmetry assumption.
\begin{notation}
Let $n\in\N$. In the next results we use the convention that for
a Hilbert space $\H$ we write vectors $\z\in\Ltwo{\matrices_{n},\Tr_{n}}\tensor\H$
as matrices $\left(\z_{ij}\right)_{1\le i,j\le n}\in \matrices_{n}(\H)$ with
respect to some fixed orthonormal basis. Furthermore, we use the notation $\pi^{(n)}$ defined in the Introduction for the pair $(\Linfty{\G},\varphi)$. So $\pi^{(n)}$ is the nearest-point projection of $\Ltwo{\matrices_{n},\Tr_{n}}\tensor\Ltwo{\G}$
onto the key closed convex set associated with $(\matrices_{n}\tensor\Linfty{\G},\Tr_{n}\tensor\varphi)$.
\end{notation}

\begin{lem}
\label{lem:id_plus_R__e_minus_i__ineq}Let $n\in\N$. Then for every
$\z\in\Ltwo{\matrices_{n},\Tr_{n}}\tensor\Ltwo{\G}$ we have 
\[
\sum_{i,j=1}^{n}\left[(\i+\Rant^{\mathrm{u}})(\epsilon(\cdot)\one-\i)\right](\aa{\widehat{\om}_{\pi^{(n)}(\z)_{i,j}}})\le\sum_{i,j=1}^{n}\left[(\i+\Rant^{\mathrm{u}})(\epsilon(\cdot)\one-\i)\right](\aa{\widehat{\om}_{\z_{i,j}}})
\]
in $\CzU{\G}^{\#}$. Recall that the operators on both sides of this
inequality are positive by \prettyref{lem:D_plus}~\prettyref{enu:D_plus__3}.
\end{lem}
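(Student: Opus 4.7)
My plan is to test the asserted inequality against positive functionals on $\CzU{\G}^{\#}$ and to identify the resulting statement as an instance of the complete Dirichlet property provided by \prettyref{thm:SV_thm_3_4}.

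As a first step, exploiting linearity of $\widehat{\om}\mapsto\aa{\widehat{\om}}$ and of the operation $(\i+\Rant^{\mathrm{u}})(\epsilon(\cdot)\one-\i)$, together with \prettyref{lem:D_plus}, I would pass from the family of vectors $(\z_{ij})$ to the single positive functional
\[
\widetilde{\om}:=\sum_{i,j=1}^{n}\widehat{\om}_{\z_{ij}}\in\Lone{\widehat{\G}},
\]
characterised by $\widetilde{\om}(T)=\langle\z,(\one\tensor T)\z\rangle$ for $T\in\Linfty{\widehat{\G}}$ and satisfying $\|\widetilde{\om}\|=\|\z\|^{2}$, and analogously to $\widetilde{\om}':=\sum_{i,j=1}^{n}\widehat{\om}_{\pi^{(n)}(\z)_{ij}}$. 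Using that $\epsilon(\aa{\widehat{\om}})=\widehat{\om}(\one)$ for every $\widehat{\om}\in\Lone{\widehat{\G}}$ (from \prettyref{lem:D_plus}~\prettyref{enu:D_plus__2}), the claimed inequality becomes positivity in $\CzU{\G}^{\#}$ of the hermitian element
\[
Y:=2\bigl(\|\z\|^{2}-\|\pi^{(n)}(\z)\|^{2}\bigr)\one-2\Ree \aa{\widetilde{\om}-\widetilde{\om}'},
\]
which can in turn be tested by showing $\nu(Y)\ge 0$ for every positive $\nu\in(\CzU{\G}^{\#})^{*}$.

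For any such $\nu$, \prettyref{prop:SV_prop_3_8}~\prettyref{enu:SV_prop_3_8__1} yields $\nu(\aa{\widetilde{\om}-\widetilde{\om}'})=(\widetilde{\om}-\widetilde{\om}')(\widetilde{R}_{\nu}^{(2,\varphi)})$. Combining this with \prettyref{lem:D_plus}~\prettyref{enu:D_plus__1} and hermitianness of positive $\nu$, I would derive the symmetrisation identity $\Ree\widetilde{R}_{\nu}^{(2,\varphi)}=\widetilde{R}_{\nu_{s}}^{(2,\varphi)}$, where $\nu_{s}:=(\nu+\nu\circ\Rant^{\mathrm{u}})/2$ is a positive $\Rant^{\mathrm{u}}$-invariant functional with $\|\nu_{s}\|=\|\nu\|=\nu(\one)$. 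Setting $c:=\|\nu_{s}\|$ (the case $c=0$ being trivial) and $\tilde\nu:=\nu_{s}/c$, an $\Rant^{\mathrm{u}}$-invariant state, a short computation then yields
\[
\tfrac{1}{2c}\,\nu(Y)=Q^{(n)}(\z)-Q^{(n)}(\pi^{(n)}(\z)),\qquad Q(\xi):=\bigl\langle\xi,(\one-\widetilde{R}_{\tilde\nu}^{(2,\varphi)})\xi\bigr\rangle\text{ on }\Ltwo{\G}.
\]

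It then remains to show that $Q$ is completely Dirichlet. For this I appeal to \prettyref{thm:SV_thm_3_4} applied to the $w^{*}$-continuous convolution semigroup $\mu_{t}:=e^{-t}\sum_{k\ge0}(t^{k}/k!)\,\tilde\nu^{\star k}$ of $\Rant^{\mathrm{u}}$-invariant states of $\CzU{\G}$, whose generating functional is $\tilde\nu-\epsilon$. Using the linearity and convolution-to-composition property of $\om\mapsto\widetilde{R}_{\om}^{(2,\varphi)}$, its associated $C_{0}$-semigroup on $\Ltwo{\G}$ equals $(e^{t(\widetilde{R}_{\tilde\nu}^{(2,\varphi)}-\one)})_{t\ge0}$, which by \prettyref{thm:SV_thm_3_4}~\prettyref{enu:SV_thm_3_4__1}$\Leftrightarrow$\prettyref{enu:SV_thm_3_4__2} is selfadjoint and completely Markov, with bounded positive selfadjoint generator $\one-\widetilde{R}_{\tilde\nu}^{(2,\varphi)}$; the correspondence \prettyref{thm:SV_thm_3_4}~\prettyref{enu:SV_thm_3_4__2}$\Leftrightarrow$\prettyref{enu:SV_thm_3_4__3} then identifies $Q$ as the associated completely Dirichlet form. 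The required inequality $Q^{(n)}\circ\pi^{(n)}\le Q^{(n)}$ follows, whence $\nu(Y)\ge 0$. The main subtlety I anticipate is the careful justification of the symmetrisation identity $\Ree\widetilde{R}_{\nu}^{(2,\varphi)}=\widetilde{R}_{\nu_{s}}^{(2,\varphi)}$, including the extension of $\Rant^{\mathrm{u}}$ to the unitisation and the passage between $\nu$ and its restriction to $\CzU{\G}$; once this bookkeeping is done, the inequality drops out by a direct appeal to \prettyref{thm:SV_thm_3_4}.
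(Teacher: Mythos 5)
Your argument is correct, and its overall strategy (test the asserted operator inequality against positive functionals, symmetrise with respect to $\Rant^{\mathrm{u}}$, and reduce to a complete-Dirichlet inequality $Q^{(n)}\circ\pi^{(n)}\le Q^{(n)}$ for the form $Q=\langle\,\cdot\,,(\one-\widetilde{R}_{\tilde\nu}^{(2,\varphi)})\,\cdot\,\rangle$) coincides with the paper's; the difference lies in which result carries the weight at the key step and where the symmetrisation happens. The paper first fixes an $\Rant^{\mathrm{u}}$-invariant state $\nu$, quotes \citep[Corollary 2.8]{Skalski_Viselter__convolution_semigroups} to see that $\widetilde{R}_{\nu}^{(2,\varphi)}$ is a selfadjoint completely Markov contraction, and then invokes \citep[Lemma 5.2]{Goldstein_Lindsay__Markov_sgs_KMS_symm_weight} directly, which says that the quadratic form of $\one_{\matrices_n}\tensor(\one-\widetilde{R}_{\nu}^{(2,\varphi)})$ is Dirichlet; only at the very end is an arbitrary state $\mu$ treated via $\nu=\tfrac12(\mu+\mu\circ\Rant^{\mathrm{u}})$, at the level of functionals. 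You instead symmetrise at the operator level (via $\Ree\widetilde{R}_{\nu}^{(2,\varphi)}=\widetilde{R}_{\nu_s}^{(2,\varphi)}$, which indeed holds for positive $\nu$ by combining \prettyref{lem:D_plus}~\prettyref{enu:D_plus__1}, \prettyref{prop:SV_prop_3_8}~\prettyref{enu:SV_prop_3_8__1} and hermitianness of $\nu$, though you only sketch this) and then obtain complete Dirichletness of $Q$ by exponentiating $\tilde\nu$ into the Poisson convolution semigroup $\mu_t=e^{-t}\sum_{k\ge0}(t^k/k!)\tilde\nu^{\star k}$ and applying \prettyref{thm:SV_thm_3_4}. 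This is a nice bootstrap: it avoids citing the Goldstein--Lindsay single-operator lemma and the KMS-symmetry result, at the cost of verifying the convolution-exponential facts (that $\mu\mapsto\widetilde{R}_{\mu}^{(2,\varphi)}$ sends convolution powers to operator powers and is norm-bounded, so that $\widetilde{R}_{\mu_t}^{(2,\varphi)}=e^{t(\widetilde{R}_{\tilde\nu}^{(2,\varphi)}-\one)}$), which are available from \citep[Subsection 2.1]{Skalski_Viselter__convolution_semigroups}. Two small points of bookkeeping, both of which you anticipate: since $\nu$ is a positive functional on $\CzU{\G}^{\#}$, the restriction of $\tilde\nu=\nu_s/\nu(\one)$ to $\CzU{\G}$ need only be a \emph{contractive} positive $\Rant^{\mathrm{u}}$-invariant functional rather than a state, so your Poisson semigroup consists of contractive positive functionals --- which is still covered by \prettyref{thm:SV_thm_3_4}~\prettyref{enu:SV_thm_3_4__1}; and your identity $\tfrac{1}{2c}\nu(Y)=Q^{(n)}(\z)-Q^{(n)}(\pi^{(n)}(\z))$ uses $c=\nu(\one)$, which is consistent with $c=\|\nu_s\|$ precisely because positive functionals on a unital C$^*$-algebra attain their norm at $\one$.
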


\begin{proof}
Fix $\z\in\Ltwo{\matrices_{n},\Tr_{n}}\tensor\Ltwo{\G}$. Let $\nu$ be an
$\Rant^{\mathrm{u}}$-invariant state of $\CzU{\G}$. Since the map
$R_{\nu}$ on $\Linfty{\G}$ is KMS-symmetric with respect to $\varphi$
\citep[Corollary 2.8]{Skalski_Viselter__convolution_semigroups} and
completely Markov, the map $\widetilde{R}_{\nu}^{(2,\varphi)}$ on
$\Ltwo{\G}$ is a (contractive) selfadjoint completely Markov operator
with respect to $\varphi$; see \citep[Appendix]{Skalski_Viselter__convolution_semigroups}
for the terminology.
By \citep[Lemma 5.2]{Goldstein_Lindsay__Markov_sgs_KMS_symm_weight}
the quadratic form on $\Ltwo{\matrices_{n},\Tr_{n}}\tensor\Ltwo{\G}$ associated
with $\one_{\matrices_{n}}\tensor(\one-\widetilde{R}_{\nu}^{(2,\varphi)})$
is Dirichlet with respect to $\Tr_{n}\tensor\varphi$, so that 
\[
\widehat{\omega}_{\pi^{(n)}(\z)}\bigl(\one_{\matrices_{n}}\tensor(\one-\widetilde{R}_{\nu}^{(2,\varphi)})\bigr)\le\widehat{\omega}_{\z}\bigl(\one_{\matrices_{n}}\tensor(\one-\widetilde{R}_{\nu}^{(2,\varphi)})\bigr),
\]
that is, 
\[
\sum_{i,j=1}^{n}\widehat{\omega}_{\pi^{(n)}(\z)_{i,j}}(\one-\widetilde{R}_{\nu}^{(2,\varphi)})\le\sum_{i,j=1}^{n}\widehat{\omega}_{\z_{i,j}}(\one-\widetilde{R}_{\nu}^{(2,\varphi)}).
\]
From \prettyref{prop:SV_prop_3_8}~\prettyref{enu:SV_prop_3_8__1} and \prettyref{lem:D_plus}~\prettyref{enu:D_plus__2}
this is equivalent to 
\[
\sum_{i,j=1}^{n}(\epsilon-\nu)(\aa{\widehat{\om}_{\pi^{(n)}(\z)_{i,j}}})\le\sum_{i,j=1}^{n}(\epsilon-\nu)(\aa{\widehat{\om}_{\z_{i,j}}}).
\]
Let now $\mu$ be an arbitrary state of $\CzU{\G}$. Then, as $\nu:=\frac{1}{2}(\mu+\mu\circ\Rant^{\mathrm{u}})$
is an $\Rant^{\mathrm{u}}$-invariant state of $\CzU{\G}$, we
deduce from the last formula that
\[
\mu\bigl(\sum_{i,j=1}^{n}\left[(\i+\Rant^{\mathrm{u}})(\epsilon(\cdot)\one-\i)\right](\aa{\widehat{\om}_{\pi^{(n)}(\z)_{i,j}}})\bigr)\le\mu\bigl(\sum_{i,j=1}^{n}\left[(\i+\Rant^{\mathrm{u}})(\epsilon(\cdot)\one-\i)\right](\aa{\widehat{\om}_{\z_{i,j}}})\bigr),
\]
and the assertion follows.
\end{proof}

In this section we will consider conditionally positive functionals as in \prettyref{def:cond_pos}
with $B$ being $\CzU{\G}^{\#}$ and $\epsilon$ being the co-unit.

\begin{cor}[compare \prettyref{cor:gamma_domain_domination}]
\label{cor:general_gen_func__D_plus}Let $\mathscr{A}$ be a globally
$\Rant^{\mathrm{u}}$-invariant unital subspace of $\CzU{\G}^{\#}$
and $\gamma\colon \mathscr{A}\to\C$ a linear functional satisfying $\gamma(\one)=0$
that is $\Rant^{\mathrm{u}}$-invariant and conditionally positive. 
\begin{enumerate}
\item \label{enu:general_gen_func__D_plus__0}For every $a\in\mathscr{A}$, if  $\left[(\i+\Rant^{\mathrm{u}})(\epsilon(\cdot)\one-\i)\right](a) \ge 0$ in $\CzU{\G}^{\#}$, then $-\gamma(a) \ge 0$.
\item \label{enu:general_gen_func__D_plus__1}For every $\widehat{\om}\in\Lone{\widehat{\G}}_{+}$
such that $\aa{\widehat{\om}}\in\mathscr{A}$ we have $-\gamma(\aa{\widehat{\om}})\ge0$. 
\item \label{enu:general_gen_func__D_plus__2}For every $n\in\N$ and $\z\in\Ltwo{\matrices_{n},\Tr_{n}}\tensor\Ltwo{\G}$
such that $\sum_{i,j=1}^{n}\aa{\widehat{\om}_{\z_{i,j}}},\sum_{i,j=1}^{n}\aa{\widehat{\om}_{\pi^{(n)}(\z)_{i,j}}}\in \mathscr{A}$
we have $-\gamma(\sum_{i,j=1}^{n}\aa{\widehat{\om}_{\pi^{(n)}(\z)_{i,j}}})\le-\gamma(\sum_{i,j=1}^{n}\aa{\widehat{\om}_{\z_{i,j}}})$.
\end{enumerate}
\end{cor}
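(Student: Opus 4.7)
I plan to derive all three parts from the conditional positivity of $\gamma$, using the $\Rant^{\mathrm{u}}$-invariance of $\gamma$ and $\mathscr{A}$, the identity $\epsilon\circ\Rant^{\mathrm{u}}=\epsilon$, and the two lemmas \prettyref{lem:D_plus} and \prettyref{lem:id_plus_R__e_minus_i__ineq} already established.

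For \prettyref{enu:general_gen_func__D_plus__0}, the first step is the identity
\[
[(\i+\Rant^{\mathrm{u}})(\epsilon(\cdot)\one-\i)](a)=2\epsilon(a)\one-a-\Rant^{\mathrm{u}}(a),
\]
which uses $\Rant^{\mathrm{u}}(\one)=\one$. Since $\mathscr{A}$ is a globally $\Rant^{\mathrm{u}}$-invariant unital subspace, this element lies in $\mathscr{A}$. Applying $\epsilon$ and using $\epsilon\circ\Rant^{\mathrm{u}}=\epsilon$ shows that it lies in $\ker\epsilon$. By the hypothesis it is also positive, so conditional positivity of $\gamma$ yields $\gamma([(\i+\Rant^{\mathrm{u}})(\epsilon(\cdot)\one-\i)](a))\ge 0$. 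On the other hand, $\gamma(\one)=0$ together with $\Rant^{\mathrm{u}}$-invariance of $\gamma$ gives
\[
\gamma\bigl([(\i+\Rant^{\mathrm{u}})(\epsilon(\cdot)\one-\i)](a)\bigr)=2\epsilon(a)\gamma(\one)-\gamma(a)-\gamma(\Rant^{\mathrm{u}}(a))=-2\gamma(a),
\]
and the conclusion $-\gamma(a)\ge 0$ follows.

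For \prettyref{enu:general_gen_func__D_plus__1}, I would simply invoke \prettyref{lem:D_plus}~\prettyref{enu:D_plus__3}, which guarantees that $[(\i+\Rant^{\mathrm{u}})(\epsilon(\cdot)\one-\i)](\aa{\widehat{\om}})\ge 0$ whenever $\widehat{\om}\in\Lone{\widehat{\G}}_+$, and then apply \prettyref{enu:general_gen_func__D_plus__0} to $a:=\aa{\widehat{\om}}\in\mathscr{A}$.

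For \prettyref{enu:general_gen_func__D_plus__2}, the plan is to set
\[
a:=\sum_{i,j=1}^{n}\aa{\widehat{\om}_{\z_{i,j}}}-\sum_{i,j=1}^{n}\aa{\widehat{\om}_{\pi^{(n)}(\z)_{i,j}}},
\]
which belongs to $\mathscr{A}$ by hypothesis. Then \prettyref{lem:id_plus_R__e_minus_i__ineq} precisely states that $[(\i+\Rant^{\mathrm{u}})(\epsilon(\cdot)\one-\i)](a)\ge 0$ in $\CzU{\G}^{\#}$, so \prettyref{enu:general_gen_func__D_plus__0} gives $-\gamma(a)\ge 0$, which by linearity is the desired inequality.

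There is no real obstacle here: the whole proof is a chain of direct applications of previously established facts. The only mildly delicate point to keep in mind is verifying membership in $\mathscr{A}\cap\ker\epsilon$ at the moment conditional positivity is invoked in \prettyref{enu:general_gen_func__D_plus__0}, which is where the structural hypotheses on $\mathscr{A}$ (unitality and $\Rant^{\mathrm{u}}$-invariance) and on $\gamma$ ($\Rant^{\mathrm{u}}$-invariance, $\gamma(\one)=0$) are all used simultaneously.
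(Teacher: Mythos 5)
Your proof is correct and is essentially the paper's own argument: part (a) via the observation that $[(\i+\Rant^{\mathrm{u}})(\epsilon(\cdot)\one-\i)](a)$ lies in $\mathscr{A}\cap\ker\epsilon$ and that $\gamma$ of it equals $-2\gamma(a)$, then (b) by combining with \prettyref{lem:D_plus}~\prettyref{enu:D_plus__3} and (c) by combining with \prettyref{lem:id_plus_R__e_minus_i__ineq} applied to the difference of the two sums. You merely spell out the membership and linearity checks that the paper leaves implicit.
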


\begin{proof}
\prettyref{enu:general_gen_func__D_plus__0}  We have $\left[(\i+\Rant^{\mathrm{u}})(\epsilon(\cdot)\one-\i)\right](a)\in\mathscr{A}\cap\ker\epsilon$
and $\gamma\left\{ \left[(\i+\Rant^{\mathrm{u}})(\epsilon(\cdot)\one-\i)\right](a)\right\} =-2\gamma(a)$ for every $a\in\mathscr{A}$.
The assertion thus follows from conditional positivity of $\gamma$.

\prettyref{enu:general_gen_func__D_plus__1} Combine \prettyref{enu:general_gen_func__D_plus__0} with \prettyref{lem:D_plus}~\prettyref{enu:D_plus__3}.

\prettyref{enu:general_gen_func__D_plus__2} Combine \prettyref{enu:general_gen_func__D_plus__0} with \prettyref{lem:id_plus_R__e_minus_i__ineq}.
\end{proof}

The next theorem is (a first incarnation of) the main result of this section. The motivation for condition \prettyref{enu:conv_smgrp_from_CND_func__4}
in the next theorem is \prettyref{cor:general_gen_func__D_plus}~\prettyref{enu:general_gen_func__D_plus__2}.
See more below.
\begin{thm}
\label{thm:conv_smgrp_from_CND_func}Let $\G$ be a locally compact
quantum group and $\mathscr{A}$ be a globally $\Rant^{\mathrm{u}}$-invariant
unital subspace of $\CzU{\G}^{\#}$. Let $\gamma\colon\mathscr{A}\to\C$
be a linear functional satisfying $\gamma(\one)=0$ that is $\Rant^{\mathrm{u}}$-invariant
and conditionally positive. Assume further that:
\begin{enumerate}[label=\textnormal{(\Roman*)}]
\item \label{enu:conv_smgrp_from_CND_func__1}$\mathscr{A}=\linspan((\mathscr{D}_{+}\cap\mathscr{A})\cup\left\{ \one\right\} )$;
\item \label{enu:conv_smgrp_from_CND_func__2}$\{\z\in\Ltwo{\G}: \aa{\widehat{\om}_{\z}}\in\mathscr{A}\} $
is a dense subspace of $\Ltwo{\G}$;
\item \label{enu:conv_smgrp_from_CND_func__3}$\gamma$ satisfies the following
lower semi-continuity property: if $\left(a_{k}\right)_{k=1}^{\infty}$
is a sequence in $\mathscr{D}_{+}\cap\mathscr{A}$ converging in the
norm of $\CzU{\G}$ to some $a\in\mathscr{D}_{+}\cap\mathscr{A}$,
then $-\gamma(a)\le\liminf_{k\to\infty}(-\gamma(a_{k}))$ (recall
that all these numbers are non-negative by \prettyref{cor:general_gen_func__D_plus}~\prettyref{enu:general_gen_func__D_plus__1});
\item \label{enu:conv_smgrp_from_CND_func__4}for every $n\in\N$ and $\z\in\Ltwo{\matrices_{n},\Tr_{n}}\tensor\Ltwo{\G}$
such that $\aa{\widehat{\om}_{\z_{i,j}}}\in\mathscr{A}$ for each
$1\le i,j\le n$ there exists a sequence $\left(\eta^{k}\right)_{k=1}^\infty$
in $\Ltwo{\matrices_{n},\Tr_{n}}\tensor\Ltwo{\G}$ that converges to $\pi^{(n)}(\z)$
such that $\aa{\widehat{\om}_{\eta_{i,j}^{k}}}\in\mathscr{A}$ for
each $k\in\N$ and $1\le i,j\le n$ and $\liminf_{k\to\infty}\sum_{i,j=1}^{n}(-\gamma(\aa{\widehat{\om}_{\eta_{i,j}^{k}}}))\le\sum_{i,j=1}^{n}(-\gamma(\aa{\widehat{\om}_{\z_{i,j}}}))$.
\end{enumerate}
Then there exists a $w^{*}$-continuous convolution semigroup of $\Rant^{\mathrm{u}}$-invariant
contractive positive functionals on $\CzU{\G}$ whose generating functional extends $\gamma$ on $\CzU \G$.
\end{thm}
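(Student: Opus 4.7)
My plan is to invoke \prettyref{thm:SV_thm_3_4} in reverse: I will construct a completely Dirichlet form $Q$ with respect to $\varphi$ that is invariant under $\mathcal{U}(\Linfty{\widehat{\G}}')$ and whose values on the Fourier images $\aa{\widehat{\om}_{\z}}$ are dictated by $\gamma$ through \prettyref{prop:SV_prop_3_8}~\prettyref{enu:SV_prop_3_8__2}. The associated convolution semigroup will be the desired one, and \prettyref{prop:SV_prop_3_8}~\prettyref{enu:SV_prop_3_8__2} combined with condition \prettyref{enu:conv_smgrp_from_CND_func__1} will furnish the extension property.

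First I set $V := \{\z \in \Ltwo{\G} : \aa{\widehat{\om}_{\z}} \in \mathscr{A}\}$, which is a dense subspace of $\Ltwo{\G}$ by \prettyref{enu:conv_smgrp_from_CND_func__2}, and define $q(\z) := -\gamma(\aa{\widehat{\om}_{\z}})$ for $\z \in V$. Non-negativity of $q$ is given by \prettyref{cor:general_gen_func__D_plus}~\prettyref{enu:general_gen_func__D_plus__1}, and the parallelogram law follows from $\widehat{\om}_{\z+\eta} + \widehat{\om}_{\z-\eta} = 2\widehat{\om}_{\z} + 2\widehat{\om}_{\eta}$ together with linearity of $\widehat{\om} \mapsto \aa{\widehat{\om}}$ and of $\gamma$. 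So $q$ is a non-negative quadratic form on $V$. Using \prettyref{enu:conv_smgrp_from_CND_func__3}, I will show $q$ is closable: if $(\z_n)$ in $V$ satisfies $\z_n \to 0$ and $q(\z_n - \z_m) \to 0$, then for each fixed $m$ one has $\aa{\widehat{\om}_{\z_n - \z_m}} \to \aa{\widehat{\om}_{\z_m}}$ in the norm of $\CzU{\G}$, whence $q(\z_m) \le \liminf_n q(\z_n - \z_m)$, and letting $m\to\infty$ together with the Cauchy assumption forces $q(\z_m) \to 0$. Let $Q$ be the closure of $q$. The invariance of $Q$ under $\mathcal{U}(\Linfty{\widehat{\G}}')$ is immediate on $V$: any $U \in \mathcal{U}(\Linfty{\widehat{\G}}')$ commutes with $\Linfty{\widehat{\G}}$, so $\aa{\widehat{\om}_{U\z}} = \aa{\widehat{\om}_{\z}}$; hence $U\z\in V$ with $q(U\z)=q(\z)$, and unitarity of $U$ together with closability propagates this to $D(Q)$.

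The crucial step is showing $Q$ is completely Dirichlet. For $n\in\N$ and $\eta \in \matrices_n(V)$, the amplified form satisfies $q^{(n)}(\eta) = \sum_{i,j=1}^n q(\eta_{ij}) = -\gamma(\sum_{i,j=1}^n \aa{\widehat{\om}_{\eta_{ij}}})$, with all summands in $\mathscr{A}$. Condition \prettyref{enu:conv_smgrp_from_CND_func__4} provides a sequence $\eta^k \to \pi^{(n)}(\eta)$ in $\Ltwo{\matrices_{n},\Tr_{n}}\tensor\Ltwo{\G}$ with entries in $V$ and $\liminf_k q^{(n)}(\eta^k) \le q^{(n)}(\eta)$. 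Lower semi-continuity of the closed form $Q^{(n)}$ then yields $Q^{(n)}(\pi^{(n)}(\eta)) \le Q^{(n)}(\eta)$. For arbitrary $\eta \in D(Q^{(n)})$, I would approximate in the form topology by elements of $\matrices_n(V)$ (this set being a form core of $Q^{(n)}$, since $V$ is a form core of $Q$) and invoke continuity of the nearest-point projection $\pi^{(n)}$ together with another application of lower semi-continuity of $Q^{(n)}$.

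Having $Q$ as required, \prettyref{thm:SV_thm_3_4} yields a $w^*$-continuous convolution semigroup ${(\mu_t)}_{t\ge 0}$ of $\Rant^{\mathrm{u}}$-invariant contractive positive functionals of $\CzU{\G}$ associated with $Q$; by \prettyref{prop:SV_prop_3_8}~\prettyref{enu:SV_prop_3_8__2} its generating functional $\tilde\gamma$ satisfies $\tilde\gamma(\aa{\widehat{\om}_{\z}}) = -Q(\z) = \gamma(\aa{\widehat{\om}_{\z}})$ for every $\z \in V$. Since $\Linfty{\widehat{\G}}$ acts standardly on $\Ltwo{\G}$, every $\widehat{\om}\in\Lone{\widehat{\G}}_{+}$ has the form $\widehat{\om}_{\z}$ for some $\z\in\Ltwo{\G}$, so $\tilde\gamma$ and $\gamma$ agree on $\mathscr{D}_+\cap\mathscr{A}$, and condition \prettyref{enu:conv_smgrp_from_CND_func__1} extends this agreement to $\mathscr{A}\cap\CzU{\G}$. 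I expect the main obstacle to be the complete Dirichlet verification in the paragraph above, where the delicate interplay between conditions \prettyref{enu:conv_smgrp_from_CND_func__3} and \prettyref{enu:conv_smgrp_from_CND_func__4} and the propagation of inequalities through matrix amplifications and form closure must be handled carefully.
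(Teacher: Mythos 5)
Your proposal is correct and follows essentially the same route as the paper's proof: you define the quadratic form $\zeta\mapsto-\gamma(\aa{\widehat{\om}_{\zeta}})$ on the dense subspace given by condition \prettyref{enu:conv_smgrp_from_CND_func__2}, use \prettyref{enu:conv_smgrp_from_CND_func__3} for closability, \prettyref{enu:conv_smgrp_from_CND_func__4} plus lower semi-continuity of the closure (first on matrices over the domain, then on the full form domain via a core argument and continuity of $\pi^{(n)}$) for the completely Dirichlet property, and then \prettyref{thm:SV_thm_3_4} together with \prettyref{prop:SV_prop_3_8}~\prettyref{enu:SV_prop_3_8__2} and condition \prettyref{enu:conv_smgrp_from_CND_func__1} to obtain the semigroup and the extension property. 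The only cosmetic difference is that you verify closability directly from lower semi-continuity on the domain, where the paper cites a standard reference.
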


\begin{proof}
The set $D:=\{ \z\in\Ltwo{\G}:\aa{\widehat{\om}_{\z}}\in\mathscr{A}\} $
is a dense subspace of $\Ltwo{\G}$ by \ref{enu:conv_smgrp_from_CND_func__2}.
\prettyref{cor:general_gen_func__D_plus}~\prettyref{enu:general_gen_func__D_plus__1}
allows defining a map $Q\colon D\to[0,\infty)$ by $Q(\z):=-\gamma(\aa{\widehat{\om}_{\z}})$
for $\z\in D$. Then $Q$ is a densely-defined quadratic form. By
\citep[Proposition A.9]{Stratila__mod_thy}, $Q$ is closable, because
if $\left(\z_{k}\right)_{k=1}^{\infty}$ is a sequence in $D$ that
converges to $\z\in D$, then $\bigl\Vert\aa{\widehat{\om}_{\z_{k}}}-\aa{\widehat{\om}_{\z}}\bigr\Vert=\bigl\Vert\aa{\widehat{\om}_{\z_{k}}-\widehat{\om}_{\z}}\bigr\Vert\le\left\Vert \widehat{\om}_{\z_{k}}-\widehat{\om}_{\z}\right\Vert \xrightarrow[k\to\infty]{}0$,
so $Q(\z)=-\gamma(\aa{\widehat{\om}_{\z}})\le\liminf_{k\to\infty}(-\gamma(\aa{\widehat{\om}_{\z_{k}}}))=\liminf_{k\to\infty}Q(\z_{k})$
by \ref{enu:conv_smgrp_from_CND_func__3}. Recall that $D(\overline{Q})$
consists of all $\z\in\Ltwo{\G}$ for which there is a sequence $\left(\z_{k}\right)_{k=1}^{\infty}$
in $D$ with $\z_{k}\xrightarrow[k\to\infty]{}\z$ and $Q(\z_{k}-\z_{\ell})\xrightarrow[k,\ell\to\infty]{}0$,
in which case $\lim_{k\to\infty}Q(\z_{k})$ exists and equals $\overline{Q}(\z)$.
Since $Q$ is obviously invariant under $\mathcal{U}(\Linfty{\widehat{\G}}')$,
so is $\overline{Q}$. 

We now show that $\overline{Q}$ is completely Dirichlet with respect
to $\varphi$. Fix $n\in\N$ and let $\left(\z_{i,j}\right)_{i,j=1}^{n}=\z\in D(Q^{(n)})$.
By \prettyref{enu:conv_smgrp_from_CND_func__4} there is a sequence $\left(\eta^{k}\right)_{k=1}^\infty$
in $\Ltwo{\matrices_{n},\Tr_{n}}\tensor\Ltwo{\G}$ that converges to $\pi^{(n)}(\z)$
such that $\eta_{i,j}^{k}\in D$ for every $k\in\N$ and
$1\le i,j\le n$ and $\liminf_{k\to\infty}\sum_{i,j=1}^{n}(-\gamma(\aa{\widehat{\om}_{\eta_{i,j}^{k}}}))\le\sum_{i,j=1}^{n}(-\gamma(\aa{\widehat{\om}_{\z_{i,j}}}))$.
Hence, the lower semi-continuity of $\overline{Q}^{(n)}$ implies
that 
\[
\overline{Q}^{(n)}(\pi^{(n)}(\z))\le\liminf_{k\to\infty}Q^{(n)}(\eta^{k})=\liminf_{k\to\infty}\sum_{i,j=1}^{n}(-\gamma(\aa{\widehat{\om}_{\eta_{i,j}^{k}}}))\le\sum_{i,j=1}^{n}(-\gamma(\aa{\widehat{\om}_{\z_{i,j}}}))=Q^{(n)}(\z).
\]
For the general case, take $\left(\z_{i,j}\right)_{i,j=1}^{n}=\z\in D(\overline{Q}^{(n)})$,
and pick a sequence $\bigl(\big(\z_{i,j}^{k}\big)_{i,j=1}^{n}\bigr)_{k=1}^{\infty}=\left(\z^{k}\right)_{k=1}^{\infty}$
in $D(Q^{(n)})$ such that $\z^{k}\xrightarrow[k\to\infty]{}\z$ and
$Q(\z_{i,j}^{k}-\z_{i,j}^{\ell})\xrightarrow[k,\ell\to\infty]{}0$
for each $1\le i,j\le n$. The foregoing, the continuity of $\pi^{(n)}$
and the lower semi-continuity of $\overline{Q}^{(n)}$ imply that
\[
\overline{Q}^{(n)}(\pi^{(n)}(\z))\le\liminf_{k\to\infty}\overline{Q}^{(n)}(\pi^{(n)}(\z^{k}))\le\lim_{k\to\infty}Q^{(n)}(\z^{k})=\overline{Q}^{(n)}(\z).
\]
This proves that $\overline{Q}^{(n)}$ is Dirichlet with respect
to $\Tr_{n}\tensor\varphi$. Since $n\in\N$ was arbitrary, $\overline{Q}$
is completely Dirichlet with respect to $\varphi$.

We conclude from \prettyref{thm:SV_thm_3_4}
that there exists a $w^{*}$-continuous convolution semigroup $\left(\mu_{t}\right)_{t\ge0}$
of $\Rant^{\mathrm{u}}$-invariant contractive positive functionals on $\CzU{\G}$ whose associated
completely Dirichlet form is $\overline{Q}$. Denoting the generating
functional of $\left(\mu_{t}\right)_{t\ge0}$ by $\gamma'$, we obtain
$\gamma|_{\mathscr{D}_{+}\cap\mathscr{A}}\subseteq\gamma'$ from \prettyref{prop:SV_prop_3_8}~\prettyref{enu:SV_prop_3_8__2},
hence $\gamma|_{\CzU \G \cap\mathscr{A}}\subseteq\gamma'$ by \ref{enu:conv_smgrp_from_CND_func__1}.
\end{proof}

The disadvantage of \prettyref{thm:conv_smgrp_from_CND_func} is that, in principle, the functional $\gamma$ may extend to two different generating functionals (so it does not determine the convolution semigroup of positive functionals in question uniquely). This cannot happen if we strengthen condition~\prettyref{enu:conv_smgrp_from_CND_func__3}, as we show in the next theorem.

\begin{thm} \label{thm:conv_smgrp_from_CND_func_unique}
	Let $\G$ be a locally compact
	quantum group and let $\mathscr{A}$ be a globally $\Rant^{\mathrm{u}}$-invariant
	unital subspace of $\CzU{\G}^{\#}$. Let $\gamma\colon \mathscr{A}\to\C$
	be a linear functional satisfying $\gamma(\one)=0$ that is $\Rant^{\mathrm{u}}$-invariant
	and conditionally positive. Assume further that conditions  \prettyref{enu:conv_smgrp_from_CND_func__1},  
	\prettyref{enu:conv_smgrp_from_CND_func__2} and \prettyref{enu:conv_smgrp_from_CND_func__4} from Theorem \ref{thm:conv_smgrp_from_CND_func} hold, and that we have a stronger version of condition~\prettyref{enu:conv_smgrp_from_CND_func__3}, namely
		\begin{enumerate}[start=3,label=\textnormal{(\Roman*.a)}]
		\item \label{enu:conv_smgrp_from_CND_func__3_strong}if $\left(a_{k}\right)_{k=1}^{\infty}$
		is a sequence in $\mathscr{D}_{+}\cap\mathscr{A}$ converging in the
		norm of $\CzU{\G}$ to some $a\in\mathscr{D}_{+}$ and $\liminf_{k\to\infty}(-\gamma(a_{k}))<\infty$,
		then $a\in\mathscr{A}$ and $-\gamma(a)\le\liminf_{k\to\infty}(-\gamma(a_{k}))$.
	\end{enumerate}
Then there exists a \emph{unique} $w^{*}$-continuous convolution semigroup of $\Rant^{\mathrm{u}}$-invariant
contractive positive functionals on $\CzU{\G}$ whose generating functional $\gamma'$ satisfies $\mathscr{D}_{+}\cap D(\gamma')=\mathscr{D}_{+}\cap\mathscr{A}$ and extends $\gamma$ on $\CzU \G$.
\end{thm}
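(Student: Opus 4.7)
The plan is to bootstrap the construction from \prettyref{thm:conv_smgrp_from_CND_func} and then use the strengthened condition \ref{enu:conv_smgrp_from_CND_func__3_strong} to pin down $\mathscr{D}_{+}\cap D(\gamma')$, at which point uniqueness comes for free from \prettyref{prop:gen_func_conv_semi_detrm_by_D_plus}.

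\textbf{Step 1 (existence).} Since \ref{enu:conv_smgrp_from_CND_func__3_strong} is evidently stronger than the lower semi-continuity condition \ref{enu:conv_smgrp_from_CND_func__3} appearing in \prettyref{thm:conv_smgrp_from_CND_func}, the latter theorem immediately produces a $w^{*}$-continuous convolution semigroup $(\mu_t)_{t\ge 0}$ of $\Rant^{\mathrm{u}}$-invariant contractive positive functionals on $\CzU{\G}$ whose generating functional $\gamma'$ extends $\gamma$ on $\CzU{\G}$. I will keep the notation from the proof of that theorem: the associated completely Dirichlet form $\overline{Q}$ is the closure of the form $Q$ on $D := \{\z \in \Ltwo{\G} : \aa{\widehat{\om}_\z} \in \mathscr{A}\}$ defined by $Q(\z) := -\gamma(\aa{\widehat{\om}_\z})$.

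\textbf{Step 2 (the key inclusion $\mathscr{D}_{+}\cap D(\gamma') \subseteq \mathscr{A}$).} The reverse inclusion is automatic because $\gamma'$ extends $\gamma$. To establish this one, take $a = \aa{\widehat{\om}} \in \mathscr{D}_{+}\cap D(\gamma')$ with $\widehat{\om} \in \Lone{\widehat{\G}}_{+}$. Since $\Linfty{\widehat{\G}}$ acts standardly on $\Ltwo{\G}$, Haagerup's theorem on standard forms lets me write $\widehat{\om} = \widehat{\om}_\z$ for some $\z \in \Ltwo{\G}$. \prettyref{prop:SV_prop_3_8}~\prettyref{enu:SV_prop_3_8__2}, applied to $(\mu_t)_{t\ge 0}$, translates $\aa{\widehat{\om}_\z} \in D(\gamma')$ into $\z \in D(\overline{Q})$. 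By the definition of closure of a quadratic form, there is a sequence $(\z_k)$ in $D$ with $\z_k \to \z$ in $\Ltwo{\G}$, $Q(\z_k - \z_\ell) \to 0$, and therefore $Q(\z_k) \to \overline{Q}(\z) < \infty$. Setting $a_k := \aa{\widehat{\om}_{\z_k}} \in \mathscr{D}_{+}\cap\mathscr{A}$, the contractivity $\|\aa{\cdot}\| \le \|\cdot\|$ together with the routine estimate $\|\widehat{\om}_{\z_k} - \widehat{\om}_\z\| \to 0$ gives $a_k \to a$ in the norm of $\CzU{\G}$, while $-\gamma(a_k) = Q(\z_k) \to \overline{Q}(\z) < \infty$. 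Condition \ref{enu:conv_smgrp_from_CND_func__3_strong} then forces $a \in \mathscr{A}$, as required.

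\textbf{Step 3 (uniqueness).} If $(\mu_t^{(1)})_{t\ge 0}$ and $(\mu_t^{(2)})_{t\ge 0}$ are two such convolution semigroups with generating functionals $\gamma_1', \gamma_2'$, then both extend $\gamma$ on $\CzU{\G}$ and both satisfy $\mathscr{D}_{+}\cap D(\gamma_i') = \mathscr{D}_{+}\cap\mathscr{A}$. In particular, $\gamma_1'$ and $\gamma_2'$ agree on $\mathscr{D}_{+}\cap\mathscr{A}$, which is their common intersection with $\mathscr{D}_{+}$. \prettyref{prop:gen_func_conv_semi_detrm_by_D_plus} then forces the two semigroups to coincide.

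The step I expect to be most delicate is Step 2, specifically the passage from $\widehat{\om} \in \Lone{\widehat{\G}}_{+}$ to a vector form $\widehat{\om}_\z$ with $\z \in D(\overline{Q})$: this is where standard-form theory enters and where all previous technical ingredients (the identification $D(\overline{Q})$ via \prettyref{prop:SV_prop_3_8}, the norm estimates on $\aa{\cdot}$, and the definition of closure of a Dirichlet form) must cooperate. Once this is in place, the two applications of \ref{enu:conv_smgrp_from_CND_func__3_strong} and \prettyref{prop:gen_func_conv_semi_detrm_by_D_plus} are essentially bookkeeping.
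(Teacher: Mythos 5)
Your proposal is correct and takes essentially the same route as the paper: your Step 2 is just the statement, proved in the paper directly, that under condition \prettyref{enu:conv_smgrp_from_CND_func__3_strong} the form $Q$ is already closed (so $D(\overline{Q})=D$), which combined with \prettyref{prop:SV_prop_3_8}~\prettyref{enu:SV_prop_3_8__2} gives $\mathscr{D}_{+}\cap D(\gamma')=\mathscr{D}_{+}\cap\mathscr{A}$. Existence via \prettyref{thm:conv_smgrp_from_CND_func} and uniqueness via \prettyref{prop:gen_func_conv_semi_detrm_by_D_plus} are exactly as in the paper.
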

\begin{proof}
	We proceed as in the proof of Theorem \ref{thm:conv_smgrp_from_CND_func}. The form $Q$ defined there is closed, for if $\left(\z_{k}\right)_{k=1}^{\infty}$
	is a sequence in $D$ that converges to $\z\in\Ltwo{\G}$ such that
	$\liminf_{k\to\infty}Q(\z_{k})<\infty$, namely $\liminf_{k\to\infty}(-\gamma(\aa{\widehat{\om}_{\z_{k}}}))<\infty$,
	then as above we have $\aa{\widehat{\om}_{\z_{k}}}\xrightarrow[k\to\infty]{}\aa{\widehat{\om}_{\z}}$
	in norm, so by assumption $\aa{\widehat{\om}_{\z}}\in\mathscr{A}$
	(that is, $\z\in D$) and $Q(\z)=-\gamma(\aa{\widehat{\om}_{\z}})\le\liminf_{k\to\infty}(-\gamma(\aa{\widehat{\om}_{\z_{k}}}))=\liminf_{k\to\infty}Q(\z_{k})$.
	Hence, $\mathscr{D}_{+}\cap D(\gamma')=\mathscr{D}_{+}\cap\mathscr{A}$
	by \prettyref{prop:SV_prop_3_8}~\prettyref{enu:SV_prop_3_8__2}.
	The generating functional $\gamma''$ of any other $w^{*}$-continuous
	convolution semigroup of $\Rant^{\mathrm{u}}$-invariant contractive positive functionals on
	$\CzU{\G}$ such that $\gamma''$ extends $\gamma$ on $\CzU \G$ and $\mathscr{D}_{+}\cap D(\gamma'')=\mathscr{D}_{+}\cap\mathscr{A}$
	behaves just like $\gamma'$ on $\mathscr{D}_{+}$, and so $\gamma'=\gamma''$
	by  \prettyref{prop:gen_func_conv_semi_detrm_by_D_plus}. 
\end{proof}

\begin{rem}\label{rem:reconst_model_situation}
	The conditions of \prettyref{thm:conv_smgrp_from_CND_func_unique} are fulfilled
	in the `model' situation, when $\gamma$ is the generating functional
	of a $w^{*}$-continuous convolution semigroup $\left(\mu_{t}\right)_{t\ge0}$
	of $\Rant^{\mathrm{u}}$-invariant states on $\G$ and we take $\mathscr{A}_{\gamma}:=\linspan((\mathscr{D}_{+}\cap D(\gamma))\cup\{\one\})\subseteq\CzU{\G}^{\#}$
	for $\mathscr{A}$, to which $\gamma$ is extended by making it vanish at $\one$, and the (unique, as indicated) convolution semigroup
	constructed in the theorem is again $\left(\mu_{t}\right)_{t\ge0}$.
	Indeed, $\mathscr{A}_{\gamma}$ is globally $\Rant^{\mathrm{u}}$-invariant
	by \prettyref{lem:D_plus}. Condition~\prettyref{enu:conv_smgrp_from_CND_func__1}
	plainly holds; notice that $\mathscr{D}_{+}\cap\mathscr{A}_{\gamma}=\mathscr{D}_{+}\cap D(\gamma)$.
	Condition~\prettyref{enu:conv_smgrp_from_CND_func__2} holds by \prettyref{prop:SV_prop_3_8}~\prettyref{enu:SV_prop_3_8__2}.
	Condition~\prettyref{enu:conv_smgrp_from_CND_func__3_strong} follows
	from \prettyref{prop:gamma_domain}~\prettyref{enu:gamma_domain__3}.
	And condition~\prettyref{enu:conv_smgrp_from_CND_func__4} verifies
	as for $n\in\N$ and $\z\in\Ltwo{\matrices_{n},\Tr_{n}}\tensor\Ltwo{\G}$ such
	that $\aa{\widehat{\om}_{\z_{i,j}}}\in D(\gamma)$ for each $1\le i,j\le n$
	one can just consider the vector $\pi^{(n)}(\z)\in\Ltwo{\matrices_{n},\Tr_{n}}\tensor\Ltwo{\G}$
	itself: indeed, combining \prettyref{lem:id_plus_R__e_minus_i__ineq}
	and \prettyref{cor:gamma_domain_domination} we get that $\aa{\sum_{i,j=1}^{n}\widehat{\om}_{\pi^{(n)}(\z)_{i,j}}}\in D(\gamma)$,
	equivalently $\aa{\widehat{\om}_{\pi^{(n)}(\z)_{i,j}}}\in D(\gamma)$
	for all $1\le i,j\le n$, and the desired inequality follows from
	\prettyref{cor:general_gen_func__D_plus}~\prettyref{enu:general_gen_func__D_plus__2}.
\end{rem}

\begin{rem}\label{rem:conv_smgrp_from_CND_func__value_at_1}
Assume that $\G$ is not compact. The fact that the convolution semigroup obtained in \prettyref{thm:conv_smgrp_from_CND_func} does not necessarily consist of states may seem counter-intuitive, but it is easy to explain. Let $\left(\mu_{t}\right)_{t\ge0}$ be a $w^{*}$-continuous convolution semigroup of $\Rant^{\mathrm{u}}$-invariant states of $\CzU{\G}$, and denote its generating functional by $\gamma_\mathrm{s}'$. Fix $c\ge0$, and let $\gamma'$ be the generating functional of the convolution semigroup ${(e^{-ct}\mu_{t})}_{t\ge0}$, namely $\gamma'=\gamma_\mathrm{s}'-c\epsilon$. Now, extend $\gamma'$ to a linear functional $\gamma$ on $\linspan (D(\gamma_\mathrm{s}') \cup \{\one\}) \subseteq \CzU{\G}^{\#}$ by making it vanish at $\one$. We assert that $\gamma$ is conditionally positive. Indeed, let $\lambda \one + a \in D(\gamma) \cap \ker \epsilon \cap \CzU{\G}^{\#}_+$ ($\lambda \in \C$, $a \in D(\gamma_\mathrm{s}') \subseteq \CzU{\G}$). Since $\CzU{\G}$ is not unital, we have $0 \in \sigma(a)$, thus $\lambda \ge 0$. Furthermore, $\lambda = -\epsilon(a)$. All in all, using the fact that the (natural) extension of $\gamma_\mathrm{s}'$ to $\linspan (D(\gamma_\mathrm{s}') \cup \{\one\})$ vanishing at $\one$ (also denoted by $\gamma_\mathrm{s}'$ in the next equation) is conditionally positive, we have
\[
\gamma(\lambda \one + a) = \gamma'(a) = \gamma_\mathrm{s}'(a)-c\epsilon(a) = \gamma_\mathrm{s}'(\lambda\one + a) +c\lambda \ge 0
\]
as the sum of two non-negative numbers.

The restriction of $\gamma$ to $\mathscr{A}_{\gamma}:=\linspan((\mathscr{D}_{+}\cap D(\gamma))\cup\{\one\})$ now satisfies the conditions of \prettyref{thm:conv_smgrp_from_CND_func_unique}, and the constructed convolution semigroup is ${(e^{-ct}\mu_{t})}_{t\ge0}$: this follows by arguing as in the previous remark.
\end{rem}

\begin{rem}\label{rem:symm_cond_pos_auto_herm}
An $\Rant^{\mathrm{u}}$-invariant conditionally positive linear functional $\gamma\colon \mathscr{A}\to\C$
with $\gamma(\one)=0$, where $\mathscr{A}$ is a globally $\Rant^{\mathrm{u}}$-invariant
unital subspace of $\CzU{\G}^{\#}$ satisfying condition~\prettyref{enu:conv_smgrp_from_CND_func__1}
of \prettyref{thm:conv_smgrp_from_CND_func}, is automatically hermitian,
because it is hermitian on $\mathscr{D}_{+}\cap\mathscr{A}$: if $\widehat{\omega}\in\Lone{\widehat{\G}}_{+}$
and $\aa{\widehat{\omega}}\in\mathscr{A}$, then $\aa{\widehat{\omega}}^{*}=\Rant^{\mathrm{u}}(\aa{\widehat{\om}})$
by \prettyref{lem:D_plus}~\prettyref{enu:D_plus__1}, thus $\gamma(\aa{\widehat{\omega}}^{*})=\gamma(\aa{\widehat{\om}})$,
and this number is non-positive, and in particular real, by \prettyref{cor:general_gen_func__D_plus}~\prettyref{enu:general_gen_func__D_plus__1}.
\end{rem}

\begin{rem}
	The difference between \prettyref{thm:conv_smgrp_from_CND_func} and \prettyref{thm:conv_smgrp_from_CND_func_unique} raises the following question. Is it indeed possible that one can have a strict containment of two noncommutative translation-invariant (completely) Dirichlet forms (in the sense of \prettyref{thm:SV_thm_3_4}~\prettyref{enu:SV_thm_3_4__3})? Classically the answer is negative, as any Dirichlet form as above contains in its domain the algebra $\CcX{2,l}G$, and the L\'evy--Khintchine formula shows that the restriction of the form to this algebra determines the convolution semigroup (so also the Dirichlet form in question). It is worth noting that once we drop the translation invariance, even classically one can construct Dirichlet forms strictly contained in each other, as can be seen for example in \cite{Fitzsimmons}. Note that a similar question can be asked about proper containment of generating functionals.
\end{rem}

Since the appearance of $\pi^{(n)}$ in the above condition~\prettyref{enu:conv_smgrp_from_CND_func__4}
is not desirable, let us observe that it can easily be replaced by
stronger conditions, one of which (condition~\prettyref{enu:conv_smgrp_from_CND_func__4_b}) depends only on $\mathscr{A}$ and not on the values of $\gamma$.

\begin{cor}
\label{cor:conv_smgrp_from_CND_func}\prettyref{thm:conv_smgrp_from_CND_func}
remains true when condition~\prettyref{enu:conv_smgrp_from_CND_func__4} is replaced
by either of the following ones.
\begin{enumerate}[label=\textnormal{(IV.\alph*)}]
\item \label{enu:conv_smgrp_from_CND_func__4_a}For every $n\in\N$ and
$\z,\eta\in\Ltwo{\matrices_{n},\Tr_{n}}\tensor\Ltwo{\G}$ such that $\aa{\widehat{\om}_{\z_{i,j}}}\in\mathscr{A}$
for each $1\le i,j\le n$ and $\sum_{i,j=1}^{n}\left[(\i+\Rant^{\mathrm{u}})(\epsilon(\cdot)\one-\i)\right](\aa{\widehat{\om}_{\eta_{i,j}}})\le\sum_{i,j=1}^{n}\left[(\i+\Rant^{\mathrm{u}})(\epsilon(\cdot)\one-\i)\right](\aa{\widehat{\om}_{\z_{i,j}}})$ in $\CzU{\G}^{\#}$
there exists a sequence $\left(\eta^{k}\right)_{k=1}^\infty$ in $\Ltwo{\matrices_{n},\Tr_{n}}\tensor\Ltwo{\G}$
that converges to $\eta$ such that $\aa{\widehat{\om}_{\eta_{i,j}^{k}}}\in\mathscr{A}$
for each $k\in\N$ and $1\le i,j\le n$ and $\liminf_{k\to\infty}\sum_{i,j=1}^{n}(-\gamma(\aa{\widehat{\om}_{\eta_{i,j}^{k}}}))\le\sum_{i,j=1}^{n}(-\gamma(\aa{\widehat{\om}_{\z_{i,j}}}))$.
\item \label{enu:conv_smgrp_from_CND_func__4_b}For every $\eta\in\Ltwo{\G}$
there exists a sequence $\left(\eta^{k}\right)_{k=1}^\infty$ in $\Ltwo{\G}$
that converges to $\eta$ such that for each $k\in\N$ we
have $\aa{\widehat{\om}_{\eta^{k}}}\in\mathscr{A}$ and \[\left[(\i+\Rant^{\mathrm{u}})(\epsilon(\cdot)\one-\i)\right](\aa{\widehat{\om}_{\eta^{k}}})\le\left[(\i+\Rant^{\mathrm{u}})(\epsilon(\cdot)\one-\i)\right](\aa{\widehat{\om}_{\eta}}) \text{ in }\CzU{\G}^{\#}.\]
\end{enumerate}
\end{cor}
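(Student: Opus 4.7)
The plan is to prove the implications \ref{enu:conv_smgrp_from_CND_func__4_b}$\Rightarrow$\ref{enu:conv_smgrp_from_CND_func__4_a}$\Rightarrow$\prettyref{enu:conv_smgrp_from_CND_func__4}, after which \prettyref{thm:conv_smgrp_from_CND_func} applies verbatim.

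For the implication \ref{enu:conv_smgrp_from_CND_func__4_a}$\Rightarrow$\prettyref{enu:conv_smgrp_from_CND_func__4}, fix $n\in\N$ and $\z\in\Ltwo{\matrices_{n},\Tr_{n}}\tensor\Ltwo{\G}$ with $\aa{\widehat{\om}_{\z_{i,j}}}\in\mathscr{A}$ for all $i,j$. Set $\eta:=\pi^{(n)}(\z)$. \prettyref{lem:id_plus_R__e_minus_i__ineq} says precisely that the pair $(\z,\eta)$ satisfies the hypothesis of \ref{enu:conv_smgrp_from_CND_func__4_a}, which directly produces the sequence demanded by \prettyref{enu:conv_smgrp_from_CND_func__4}.

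The main work is therefore the implication \ref{enu:conv_smgrp_from_CND_func__4_b}$\Rightarrow$\ref{enu:conv_smgrp_from_CND_func__4_a}. Given $\z,\eta$ as in \ref{enu:conv_smgrp_from_CND_func__4_a}, apply \ref{enu:conv_smgrp_from_CND_func__4_b} to each entry $\eta_{i,j}\in\Ltwo{\G}$ separately to obtain, for every $k\in\N$ and $1\le i,j\le n$, a vector $\eta_{i,j}^{k}\in\Ltwo{\G}$ with $\aa{\widehat{\om}_{\eta_{i,j}^{k}}}\in\mathscr{A}$, such that $\eta_{i,j}^{k}\xrightarrow[k\to\infty]{}\eta_{i,j}$ and
\[
\left[(\i+\Rant^{\mathrm{u}})(\epsilon(\cdot)\one-\i)\right](\aa{\widehat{\om}_{\eta_{i,j}^{k}}})\le\left[(\i+\Rant^{\mathrm{u}})(\epsilon(\cdot)\one-\i)\right](\aa{\widehat{\om}_{\eta_{i,j}}})
\]
in $\CzU{\G}^{\#}$. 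Assembling these entries defines $\eta^{k}\in\Ltwo{\matrices_{n},\Tr_{n}}\tensor\Ltwo{\G}$ converging to $\eta$. Summing the preceding inequalities over $i,j$ and chaining with the assumption on $(\z,\eta)$ yields
\[
\sum_{i,j=1}^{n}\left[(\i+\Rant^{\mathrm{u}})(\epsilon(\cdot)\one-\i)\right](\aa{\widehat{\om}_{\eta_{i,j}^{k}}})\le\sum_{i,j=1}^{n}\left[(\i+\Rant^{\mathrm{u}})(\epsilon(\cdot)\one-\i)\right](\aa{\widehat{\om}_{\z_{i,j}}})
\]
in $\CzU{\G}^{\#}$. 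Since all elements $\aa{\widehat{\om}_{\z_{i,j}}}$, $\aa{\widehat{\om}_{\eta_{i,j}^{k}}}$ lie in $\mathscr{A}$, their difference $a:=\sum_{i,j}\aa{\widehat{\om}_{\z_{i,j}}}-\sum_{i,j}\aa{\widehat{\om}_{\eta_{i,j}^{k}}}$ belongs to $\mathscr{A}$ and satisfies $[(\i+\Rant^{\mathrm{u}})(\epsilon(\cdot)\one-\i)](a)\ge 0$. Invoking \prettyref{cor:general_gen_func__D_plus}~\prettyref{enu:general_gen_func__D_plus__0} gives $-\gamma(a)\ge 0$, that is,
\[
\sum_{i,j=1}^{n}(-\gamma(\aa{\widehat{\om}_{\eta_{i,j}^{k}}}))\le\sum_{i,j=1}^{n}(-\gamma(\aa{\widehat{\om}_{\z_{i,j}}})).
\]
Passing to $\liminf_{k\to\infty}$ on the left yields the remaining condition of \ref{enu:conv_smgrp_from_CND_func__4_a}.

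The principal obstacle is the passage \ref{enu:conv_smgrp_from_CND_func__4_b}$\Rightarrow$\ref{enu:conv_smgrp_from_CND_func__4_a}: \ref{enu:conv_smgrp_from_CND_func__4_b} gives only an entry-by-entry approximation with a pointwise domination of the quadratic-form symbol $[(\i+\Rant^{\mathrm{u}})(\epsilon(\cdot)\one-\i)](\aa{\widehat{\om}_{\cdot}})$, whereas \ref{enu:conv_smgrp_from_CND_func__4_a} needs control of the sum of the values of $-\gamma$ over the entries. This gap is bridged precisely by \prettyref{cor:general_gen_func__D_plus}~\prettyref{enu:general_gen_func__D_plus__0}, which converts the operator inequality into a scalar inequality for $\gamma$; without this step the argument would fail. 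Once both implications are established, \prettyref{thm:conv_smgrp_from_CND_func} produces the desired convolution semigroup.
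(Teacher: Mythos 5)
Your proposal is correct and follows exactly the paper's route: the paper also proves \prettyref{enu:conv_smgrp_from_CND_func__4_b}$\Rightarrow$\prettyref{enu:conv_smgrp_from_CND_func__4_a} via \prettyref{cor:general_gen_func__D_plus}~\prettyref{enu:general_gen_func__D_plus__0} and \prettyref{enu:conv_smgrp_from_CND_func__4_a}$\Rightarrow$\prettyref{enu:conv_smgrp_from_CND_func__4} via \prettyref{lem:id_plus_R__e_minus_i__ineq}. You merely spell out in detail (entrywise application of \prettyref{enu:conv_smgrp_from_CND_func__4_b}, summation, and the conversion of the operator inequality into the scalar inequality for $\gamma$) what the paper's two-line proof leaves implicit.
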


\begin{proof}
We have \prettyref{enu:conv_smgrp_from_CND_func__4_b}$\implies$\prettyref{enu:conv_smgrp_from_CND_func__4_a} by \prettyref{cor:general_gen_func__D_plus}~\prettyref{enu:general_gen_func__D_plus__0},
and \prettyref{enu:conv_smgrp_from_CND_func__4_a}$\implies$\prettyref{enu:conv_smgrp_from_CND_func__4}
by \prettyref{lem:id_plus_R__e_minus_i__ineq}.
\end{proof}
We close with a result connecting Property~(T) and conditionally
positive functionals. One of the main results of \citep{Skalski_Viselter__convolution_semigroups}
says that if $\G$ is second countable and $\widehat{\G}$ does not
have Property~(T), then there exists a $w^{*}$-continuous convolution
semigroup of $\Rant^{\mathrm{u}}$-invariant states of $\CzU{\G}$
with unbounded generator (and vice versa) \citep[Theorem 4.6]{Skalski_Viselter__convolution_semigroups}.
Let us establish the converse in the more general framework of this
section.
\begin{thm}
If $\G$ is a locally compact quantum group and $\gamma$ satisfies
the assumptions in \prettyref{thm:conv_smgrp_from_CND_func}
and is unbounded (equivalently: unbounded after restricting to $\CzU \G$), then $\widehat{\G}$ does not have Property~(T).
\end{thm}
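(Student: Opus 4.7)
The plan is to reduce the statement to the converse direction of \citep[Theorem~4.6]{Skalski_Viselter__convolution_semigroups} by using the reconstruction theorem just proved.

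First, I would apply \prettyref{thm:conv_smgrp_from_CND_func} to the functional $\gamma$, obtaining a $w^{*}$-continuous convolution semigroup $\left(\mu_{t}\right)_{t\ge0}$ of $\Rant^{\mathrm{u}}$-invariant contractive positive functionals on $\CzU{\G}$ whose generating functional $\gamma'$ extends $\gamma$ on $\CzU{\G}$, i.e.\ $\gamma|_{\CzU{\G}\cap\mathscr{A}}\subseteq\gamma'$.

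Next, I would verify the parenthetical equivalence: since $\mathscr{A}\subseteq\CzU{\G}^{\#}$, every $a\in\mathscr{A}$ decomposes as $a=a_{0}+\l\one$ with $a_{0}\in\CzU{\G}\cap\mathscr{A}$ and $\l\in\C$, and because $\gamma(\one)=0$ we have $\gamma(a)=\gamma(a_{0})$; so $\gamma$ is unbounded exactly when its restriction to $\CzU{\G}\cap\mathscr{A}$ is. Since the latter sits inside $\gamma'$, it follows that $\gamma'$ is an unbounded linear functional on its domain inside $\CzU{\G}$.

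Then I would normalise $(\mu_{t})_{t\ge 0}$ to a convolution semigroup of $\Rant^{\mathrm{u}}$-invariant states via \prettyref{rem:SV_thm_3_4} (the details being essentially those spelled out in \prettyref{rem:conv_smgrp_from_CND_func__value_at_1}): setting $\mu_{t}^{\mathrm{s}}:=e^{ct}\mu_{t}$ with $c:=-\log\Vert\mu_{1}\Vert\ge 0$ produces a $w^{*}$-continuous convolution semigroup of $\Rant^{\mathrm{u}}$-invariant states whose generating functional $\gamma'_{\mathrm{s}}$ satisfies $\gamma'_{\mathrm{s}}=\gamma'+c\epsilon$ on $\CzU{\G}$. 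As $\epsilon$ is a character, hence a bounded functional, $\gamma'_{\mathrm{s}}$ is unbounded on $\CzU{\G}$ as well.

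Finally, I would invoke the contrapositive of one direction of \citep[Theorem~4.6]{Skalski_Viselter__convolution_semigroups}: if $\widehat{\G}$ had Property~(T), then every $w^{*}$-continuous convolution semigroup of $\Rant^{\mathrm{u}}$-invariant states on $\CzU{\G}$ would have a bounded generating functional. Since we just exhibited one with an unbounded generating functional, we conclude that $\widehat{\G}$ does not have Property~(T). The main obstacle is a matter of bookkeeping rather than mathematics: one has to check that the direction of the cited theorem we rely on is genuinely the `easy' implication (contractivity of one-parameter semigroups on $\Ltwo{\G}$ forced to have a spectral gap by Property~(T)), and so it does not require the second countability hypothesis that appears in the forward implication of that theorem.
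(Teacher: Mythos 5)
Your reduction coincides with the paper's up to the very last step: apply \prettyref{thm:conv_smgrp_from_CND_func} to get a semigroup whose generating functional $\gamma'$ extends $\gamma$ on $\CzU{\G}$, note that $\gamma'$ is then unbounded (your observation that unboundedness of $\gamma$ passes to its restriction to $\CzU{\G}\cap\mathscr{A}$ because $\gamma(\one)=0$ is fine), and normalise to a $w^{*}$-continuous convolution semigroup of $\Rant^{\mathrm{u}}$-invariant states whose generating functional is still unbounded, since it differs from $\gamma'$ by a multiple of the bounded functional $\epsilon$.

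The gap is in the final citation. You conclude by invoking the converse direction of \citep[Theorem 4.6]{Skalski_Viselter__convolution_semigroups}, but that theorem is stated for \emph{second countable} $\G$, whereas the statement you are proving carries no such hypothesis --- indeed the paper introduces this very theorem as establishing that converse ``in the more general framework of this section''. Your closing sentence defers exactly this point (``one has to check that the direction we rely on does not require second countability''), and that is the substantive issue, not bookkeeping: as written, the cited result does not apply to an arbitrary locally compact quantum group, and no verification is offered. The paper closes the argument differently, in a way valid in full generality: an unbounded generating functional forces the semigroup to fail norm continuity by \citep[Theorem 3.7]{Lindsay_Skalski__conv_semigrp_states}, and after normalising to states, the existence of a $w^{*}$-continuous but not norm-continuous convolution semigroup of states yields that $\widehat{\G}$ does not have Property~(T) by \citep[Theorem 6.1]{Daws_Skalski_Viselter__prop_T}. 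Replacing your final paragraph by this route (or by any reference, proved without second countability, for the implication ``Property~(T) of $\widehat{\G}$ implies boundedness of all such generating functionals'') closes the gap; the rest of your argument stands.
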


\begin{proof}
Applying \prettyref{thm:conv_smgrp_from_CND_func} we get a $w^{*}$-continuous
convolution semigroup $\left(\mu_{t}\right)_{t\ge0}$ of $\Rant^{\mathrm{u}}$-invariant
contractive positive functionals of $\CzU{\G}$ whose generating functional $\gamma'$ extends
$\gamma$ on $\CzU \G$. Since $\gamma$ is unbounded on $\CzU \G$, $\gamma'$ is unbounded. This means that
$\left(\mu_{t}\right)_{t\ge0}$ is not norm continuous \citep[Theorem 3.7]{Lindsay_Skalski__conv_semigrp_states}.
By normalising, we can assume that $\left(\mu_{t}\right)_{t\ge0}$ consists of states.
This implies that $\widehat{\G}$ does not have Property~(T) by \citep[Theorem 6.1]{Daws_Skalski_Viselter__prop_T}.
\end{proof}

\section{Example: compact quantum groups} \label{sec:cqg_examples}

In this section we show that \prettyref{thm:conv_smgrp_from_CND_func}
can be applied to prove Sch\"urmann's reconstruction theorem for
compact quantum groups assuming that the functional is $\Rant^{\mathrm{u}}$-invariant.
Our proof is analytic, and is very different from the original one. It is worth noting that a proof of the Sch\"urmann reconstruction theorem for compact quantum groups using Dirichlet form techniques was circulated a few years ago in unpublished notes of Roland Vergnioux.

Let $\G$ be a compact
quantum group. For $S\subseteq\Irred{\G}$ write $\Pol{\G}_{S}:=\linspan\{ u_{ij}^{\a}:\a\in S,1\le i,j\le n_{\a}\} $.
Also let $\Pol{\G}_{\a}:=\Pol{\G}_{\left\{ \a\right\} }$ for $\a\in\Irred{\G}$.
Denote by $h$ the Haar state of $\G$, both on $\CU{\G}$ and on
$\Linfty{\G}$. Recall the orthogonality relation 
\[
h(u_{ij}^{\a*}u_{kl}^{\be})=\frac{1}{tr(Q_{\a})}\delta_{\a\be}\delta_{jl}(Q_{\a}^{-1})_{ki}\qquad(\forall_{\a,\be\in\Irred{\G}}\forall_{1\le i,j\le n_{\a},1\le k,l\le n_{\be}})
\]
for suitable invertible positive matrices $Q_{\a} \in \matrices_{n_\a}$, $\a \in \Irred{\G}$.
Let $\a\in\Irred{\G}$. For $1 \le s,t \le n_\a$, the functional on either $\CU{\G}$ or $\Linfty{\G}$
given by 
\[
x\mapsto\sum_{i=1}^{n_{\a}}tr(Q_{\a})\cdot\left(Q_{\a}\right)_{is}h(u_{it}^{\a*}x)
\]
maps $u_{kl}^{\be}$ to $\delta_{\a\be}\delta_{ks}\delta_{lt}$ for all $\be\in\Irred{\G}$
and $1\le k,l\le n_{\be}$. These functionals yield a bounded linear map $P_{\a}$
on $\CU{\G}$ acting as the identity on $\Pol{\G}_{\a}$ and annihilating
$\Pol{\G}_{\Irred{\G}\backslash\left\{ \a\right\} }$, and a similar
(normal) map exists on $\Linfty{\G}$ (for more information on such maps in the broader context of compact quantum group actions we refer to \cite[Section 3]{DeCommer_Notes}). Writing $\gnsmap$ for the GNS map of $h$ and $p_{\a}$ for the (orthogonal)
projection of $\Ltwo{\G}$ onto $\Ltwo{\G}_{\a} := \gnsmap(\Pol{\G}_{\a})$, we clearly have
$p_{\a}\circ\gnsmap=\gnsmap\circ P_{\a}$. Note that $P_{\a}$
commutes with the scaling group of $\G$. Furthermore, for all $\z\in\Ltwo{\G}$,
we have $P_{\a}((\widehat{\om}_{\z}\tensor\i)(\widehat{\wW}))=(\widehat{\om}_{p_{\overline{\a}}\z}\tensor\i)(\widehat{\wW})$,
and consequently
\begin{equation}
P_{\a}(\aa{\widehat{\om}_{\z}})=\aa{\widehat{\om}_{p_{\overline{\a}}\z}}.\label{eq:CQG_proj_a}
\end{equation}
Finally, for $S\subseteq\Irred{\G}$, set $p_S := \sum_{\a\in S} p_\a$, and if $S$ is finite, set $P_S := \sum_{\a\in S} P_\a$.
\begin{lem}
\label{lem:CQG_a_proj}For every $\eta\in\Ltwo{\G}$ and $S\subseteq\Irred{\G}$
we have 
\[
\left[(\i+\Rant^{\mathrm{u}})(\epsilon(\cdot)\one-\i)\right](\aa{\widehat{\om}_{p_{S}\eta}})\le\left[(\i+\Rant^{\mathrm{u}})(\epsilon(\cdot)\one-\i)\right](\aa{\widehat{\om}_{\eta}}).
\]
\end{lem}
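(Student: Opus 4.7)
The argument adapts the technique of \prettyref{lem:id_plus_R__e_minus_i__ineq}, with the role of the nearest-point projection $\pi^{(n)}$ now played by the projection $p_S$. The crucial feature of the compact case is that $p_S$ is a \emph{central} projection in $\Linfty{\widehat{\G}}$: indeed, $\Linfty{\widehat{\G}} \cong \ell^\infty\text{-}\bigoplus_{\a\in\Irred{\G}} \matrices_{n_\a}$, with each $p_\a$ identified with the unit of the $\a$-th summand, so $p_S = \sum_{\a\in S} p_\a$ (strongly convergent) is central in $\Linfty{\widehat{\G}}$. This centrality will replace the (more delicate) Dirichlet-form invariance under the nearest-point projection used in \prettyref{lem:id_plus_R__e_minus_i__ineq}.

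Since $\CU{\G}^{\#} = \CU{\G}$ (as $\G$ is compact), it suffices to pair the asserted operator inequality with an arbitrary state $\mu$ of $\CU{\G}$. Following the averaging trick from the last paragraph of the proof of \prettyref{lem:id_plus_R__e_minus_i__ineq}, the element $\nu := \tfrac{1}{2}(\mu + \mu \circ \Rant^{\mathrm{u}})$ is an $\Rant^{\mathrm{u}}$-invariant state of $\CU{\G}$, and for every $a \in \CU{\G}$ one has $\mu\bigl([(\i + \Rant^{\mathrm{u}})(\epsilon(\cdot)\one - \i)](a)\bigr) = 2(\epsilon - \nu)(a)$. Setting $T := \widetilde{R}_{\nu}^{(2,\varphi)} \in \Linfty{\widehat{\G}}$ and invoking \prettyref{prop:SV_prop_3_8}~\prettyref{enu:SV_prop_3_8__1}, the desired inequality reduces to
\[
\langle p_S\eta, (\one - T)p_S\eta\rangle \le \langle \eta, (\one - T)\eta\rangle.
\]

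To finish, I use that $p_S$ commutes with $T$ by centrality of $p_S$ in $\Linfty{\widehat{\G}}$, and that $\one - T \ge 0$ since by \prettyref{thm:SV_thm_3_4} the form $\z \mapsto \langle \z, (\one - T)\z\rangle$ is a (completely) Dirichlet form with respect to $\varphi$. Consequently, the cross terms in the expansion of $\langle \eta, (\one - T)\eta\rangle$ along $\eta = p_S\eta + (\one - p_S)\eta$ vanish (as $(\one-T)(\one-p_S)\eta$ lies in the range of $\one - p_S$, orthogonal to $p_S\eta$), yielding
\[
\langle \eta, (\one - T)\eta\rangle - \langle p_S\eta, (\one - T) p_S\eta\rangle = \langle (\one-p_S)\eta, (\one - T)(\one-p_S)\eta\rangle \ge 0,
\]
as required. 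I do not anticipate a significant obstacle: the only genuinely compact-quantum-group-specific input is the centrality of $p_S$ in $\Linfty{\widehat{\G}}$, and everything else is a direct reuse of the machinery already established in Section~\ref{sec:dom_gen_func} and Section~\ref{sec:reconstruction}.
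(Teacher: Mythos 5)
Your argument is correct and is essentially the paper's own proof: the paper likewise reduces, via the averaging $\nu=\tfrac12(\mu+\mu\circ\Rant^{\mathrm{u}})$ and \prettyref{prop:SV_prop_3_8}~\prettyref{enu:SV_prop_3_8__1}, to the inequality $\widehat\omega_{p_S\eta}(\one-\widetilde{R}_{\nu}^{(2,\varphi)})\le\widehat\omega_{\eta}(\one-\widetilde{R}_{\nu}^{(2,\varphi)})$, which holds because $\one-\widetilde{R}_{\nu}^{(2,\varphi)}$ is positive and lies in $\linfty{\widehat\G}$ while $p_S$ is central there --- exactly your cross-term computation. One small correction: the positivity of $\one-\widetilde{R}_{\nu}^{(2,\varphi)}$ should be justified not via \prettyref{thm:SV_thm_3_4} (a single state is not a convolution semigroup) but by the fact that $\widetilde{R}_{\nu}^{(2,\varphi)}$ is a selfadjoint contraction, since $\nu$ is an $\Rant^{\mathrm{u}}$-invariant state (KMS-symmetry, as in the proof of \prettyref{lem:id_plus_R__e_minus_i__ineq}).
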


\begin{proof}
We follow the line of proof of \prettyref{lem:id_plus_R__e_minus_i__ineq}.
For every $\Rant^{\mathrm{u}}$-invariant state $\nu$ of $\CU{\G}$,
since $\one-\widetilde{R}_{\nu}^{(2,\varphi)}$ is positive and belongs
to $\linfty{\widehat{\G}}$, we have $\widehat{\omega}_{p_{S}\eta}(\one-\widetilde{R}_{\nu}^{(2,\varphi)})\le\widehat{\omega}_{\eta}(\one-\widetilde{R}_{\nu}^{(2,\varphi)})$,
which is equivalent to $(\epsilon-\nu)(\aa{\widehat{\om}_{p_{S}\eta}})\le(\epsilon-\nu)(\aa{\widehat{\om}_{\eta}})$.
From this one readily infers the desired inequality.
\end{proof}
\begin{thm}
Let $\G$ be a compact quantum group and $\gamma\colon \Pol{\G}\to\C$ be
a linear functional satisfying $\gamma(\one)=0$ that is $\Rant^{\mathrm{u}}$-invariant
and conditionally positive. Then $\gamma$ satisfies the assumptions
of \prettyref{thm:conv_smgrp_from_CND_func}.
\end{thm}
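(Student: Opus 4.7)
The plan is to verify the four hypotheses~\prettyref{enu:conv_smgrp_from_CND_func__1}--\prettyref{enu:conv_smgrp_from_CND_func__4} of \prettyref{thm:conv_smgrp_from_CND_func} applied to $\mathscr{A}:=\Pol{\G}$, which is a unital globally $\Rant^{\mathrm{u}}$-invariant subspace of $\CU{\G}^{\#}=\CU{\G}$. Condition~\prettyref{enu:conv_smgrp_from_CND_func__1} is essentially already recorded in \prettyref{exa:our_algebra_compact_case}, where it is observed that $\Pol{\G}=\linspan(\mathscr{D}_{+}\cap\Pol{\G})$; one only needs to add that $\one=\aa{\widehat{\epsilon}}\in\mathscr{D}_{+}\cap\Pol{\G}$ (since $\widehat{\epsilon}$ is a normal positive functional on $\linfty{\widehat{\G}}$ when $\G$ is compact, with $(\widehat{\epsilon}\tensor\i)(\widehat{\wW})=\one$). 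For condition~\prettyref{enu:conv_smgrp_from_CND_func__2}, formula~\prettyref{eq:CQG_proj_a} shows that for $\z\in p_{F}\Ltwo{\G}$ with finite $F\subseteq\Irred{\G}$, $\aa{\widehat{\om}_{\z}}\in\Pol{\G}_{\overline{F}}\subseteq\Pol{\G}$; such vectors $\z$ span the dense subspace $\gnsmap(\Pol{\G})\subseteq\Ltwo{\G}$.

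For condition~\prettyref{enu:conv_smgrp_from_CND_func__4} I plan to verify the stronger sufficient condition~\prettyref{enu:conv_smgrp_from_CND_func__4_b} of \prettyref{cor:conv_smgrp_from_CND_func}. Given $\eta\in\Ltwo{\G}$, the finiteness of $\|\eta\|^{2}=\sum_{\a}\|p_{\a}\eta\|^{2}$ allows one to pick an increasing exhaustion $\{F_{k}\}_{k=1}^{\infty}$ of $\{\a\in\Irred{\G}: p_{\a}\eta\neq0\}$ by finite sets; set $\eta^{k}:=p_{F_{k}}\eta$, so that $\eta^{k}\to\eta$. Since the central projections $p_{\a}\in\Linfty{\widehat{\G}}$ are pairwise orthogonal, $\widehat{\om}_{p_{F_{k}}\eta}=\sum_{\a\in F_{k}}\widehat{\om}_{p_{\a}\eta}$ in $\Lone{\widehat{\G}}$, and by~\prettyref{eq:CQG_proj_a}, $\aa{\widehat{\om}_{\eta^{k}}}=\sum_{\a\in F_{k}}\aa{\widehat{\om}_{p_{\a}\eta}}\in\Pol{\G}$. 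The required inequality is precisely \prettyref{lem:CQG_a_proj}.

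Condition~\prettyref{enu:conv_smgrp_from_CND_func__3} is the main step. Let $(a_{k})\subseteq\mathscr{D}_{+}\cap\Pol{\G}$ converge in norm to $a\in\mathscr{D}_{+}\cap\Pol{\G}$, and write $a_{k}=\aa{\widehat{\om}_{\z_{k}}}$ for vectors $\z_{k}\in\Ltwo{\G}$ representing the defining positive functionals (this is possible as $\Linfty{\widehat{\G}}$ acts standardly on $\Ltwo{\G}$). Choose a finite $F\subseteq\Irred{\G}$ with $a\in\Pol{\G}_{F}$, so that $P_{F}(a)=a$. Arguing as in the previous paragraph, $P_{F}(a_{k})=\aa{\widehat{\om}_{p_{\overline{F}}\z_{k}}}$. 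Applying \prettyref{lem:CQG_a_proj} with $S=\overline{F}$ followed by \prettyref{cor:general_gen_func__D_plus}~\prettyref{enu:general_gen_func__D_plus__0} yields $-\gamma(P_{F}(a_{k}))\le-\gamma(a_{k})$. On the other hand, boundedness of $P_{F}$ gives $P_{F}(a_{k})\to a$ in the finite-dimensional subspace $\Pol{\G}_{F}$, on which $\gamma$ is automatically continuous, whence $\gamma(P_{F}(a_{k}))\to\gamma(a)$. Combining,
\[
-\gamma(a)=\lim_{k}(-\gamma(P_{F}(a_{k})))\le\liminf_{k}(-\gamma(a_{k})),
\]
which is condition~\prettyref{enu:conv_smgrp_from_CND_func__3}.

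The main obstacle lies in condition~\prettyref{enu:conv_smgrp_from_CND_func__3}, because a priori no continuity of $\gamma$ on the infinite-dimensional space $\Pol{\G}$ is available. The strategy is to reduce to the finite-dimensional Peter--Weyl blocks $\Pol{\G}_{F}$ (where continuity is free) by projection, and to control the error using the dominance inequality~\prettyref{lem:CQG_a_proj} so that the projected values of $\gamma$ stay below $\gamma$ of the original element.
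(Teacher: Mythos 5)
Your verification is correct and essentially reproduces the paper's own proof: conditions \ref{enu:conv_smgrp_from_CND_func__1}--\ref{enu:conv_smgrp_from_CND_func__2} from the Peter--Weyl picture, condition \ref{enu:conv_smgrp_from_CND_func__3} by projecting with $P_{F}$ onto the finite-dimensional block $\Pol{\G}_{F}$, and condition \ref{enu:conv_smgrp_from_CND_func__4} via condition \ref{enu:conv_smgrp_from_CND_func__4_b} of \prettyref{cor:conv_smgrp_from_CND_func} together with \prettyref{lem:CQG_a_proj}; the only (immaterial) difference is that in \ref{enu:conv_smgrp_from_CND_func__3} you obtain $-\gamma(P_{F}(a_{k}))\le-\gamma(a_{k})$ from \prettyref{lem:CQG_a_proj} plus \prettyref{cor:general_gen_func__D_plus}~\prettyref{enu:general_gen_func__D_plus__0}, whereas the paper writes $a_{k}=P_{F}(a_{k})+(a_{k}-P_{F}(a_{k}))$ as a sum of two elements of $\mathscr{D}_{+}\cap\Pol{\G}$ and uses \prettyref{cor:general_gen_func__D_plus}~\prettyref{enu:general_gen_func__D_plus__1} --- the same estimate. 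One small point to tighten: condition \ref{enu:conv_smgrp_from_CND_func__2} requires $\{\z\in\Ltwo{\G}:\aa{\widehat{\om}_{\z}}\in\Pol{\G}\}$ to be a dense \emph{subspace}, and you only show it contains $\gnsmap(\Pol{\G})$; in fact it equals $\gnsmap(\Pol{\G})$ (if $\aa{\widehat{\om}_{\z}}\in\Pol{\G}$, then \prettyref{eq:CQG_proj_a} and injectivity of $\widehat{\om}\mapsto\aa{\widehat{\om}}$ force $p_{\overline{\a}}\z=0$ for all but finitely many $\a$), which is exactly the equivalence the paper records at this step.
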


\begin{proof}
\prettyref{enu:conv_smgrp_from_CND_func__1} and \prettyref{enu:conv_smgrp_from_CND_func__2}
are clear (see \prettyref{exa:our_algebra_compact_case}, and note that $\aa{\widehat{\om}_{\z}} \in \Pol \G$ if and only if $\z$ belongs to the \emph{algebraic} direct sum of the subspaces ${\Ltwo\G}_\a$, $\a \in \Irred \G$).

\prettyref{enu:conv_smgrp_from_CND_func__3} Let a sequence $\left(a_{k}\right)_{k=1}^\infty$
in $\mathscr{D}_{+}\cap\Pol{\G}$ converge to some $a\in\mathscr{D}_{+}\cap\Pol{\G}$.
Let $F$ be a finite subset of $\Irred{\G}$ such that $a\in\Pol{\G}_{F}$.
For each $k\in\N$, the elements $a_{k}^{1}:=P_{F}(a_{k})$
and $a_{k}^{2}:=a_{k}-a_{k}^{1}$ satisfy $a_{k}=a_{k}^{1}+a_{k}^{2}$,
$a_{k}^{1},a_{k}^{2}\in\mathscr{D}_{+}\cap\Pol{\G}$ by \prettyref{eq:CQG_proj_a}, $a_{k}^{1}\in\Pol{\G}_{F}$
and $a_{k}^{2}\in\Pol{\G}_{\Irred{\G}\backslash F}$. Clearly $a_{k}^{1}\xrightarrow[k\to\infty]{}a$.
From linearity of $\gamma$ and finite dimensionality of $\Pol{\G}_{F}$
we deduce that $\gamma(a_{k}^{1})\xrightarrow[k\to\infty]{}\gamma(a)$.
Since $-\gamma(a_{k}^{1}),-\gamma(a_{k}^{2})\ge0$ for every $k\in\N$
by \prettyref{cor:general_gen_func__D_plus}~\prettyref{enu:general_gen_func__D_plus__1},
the inequality $-\gamma(a)\le\liminf_{k\to\infty}(-\gamma(a_{k}))$
is now obvious.

\prettyref{enu:conv_smgrp_from_CND_func__4} Let $\mathcal{F}$ denote
the set of all finite subsets of $\Irred{\G}$ directed by inclusion.
For $\eta\in\Ltwo{\G}$, the net $\left(p_{S}\eta\right)_{S\in\mathcal{F}}$
converges to $\eta$, and for each $S\in\mathcal{F}$ we have $\aa{\widehat{\om}_{p_{S}\eta}}\in\Pol{\G}$.
It follows from \prettyref{lem:CQG_a_proj} that condition~\prettyref{enu:conv_smgrp_from_CND_func__4_b}
of \prettyref{cor:conv_smgrp_from_CND_func} holds.
\end{proof}

\section*{Funding}
The first author was partially supported by the National Science Centre (NCN) \linebreak{} [2014/14/E/ST1/00525]. 

\section*{Acknowledgements}

We thank V.~Runde, N.~Spronk and L.~Turowska for helpful correspondences
about the content of this paper. We are grateful to P.~Ohrysko for
communicating to us his proof that $\CzX{\infty}{\R}\nsubseteq B(\R)$
and letting us present it here (see \prettyref{exa:our_algebra_classical_case}).
Some of the work on this paper was done during a visit of the first author to Haifa in April 2018, and during a visit of the second author to Warsaw in September 2018; the hospitalities of the mathematics department/institute are gratefully acknowledged by both authors. We thank also the referees for their careful reading of the initial version of the paper and several comments improving the presentation and content of the text.

\bibliographystyle{amsalpha}
\bibliography{GenFunc2}

\end{document}